\newtheorem{theoreme}{Theorem}
\newtheorem{lemme}{Lemma}
\newtheorem{definition}{Definition}
\newtheorem{proposition}{Proposition}
\newtheorem{corollaire}{Corollary}
\newtheorem{remarque}{Remark}
\newcommand{\T}{\ensuremath{{\mathcal{T}}}}
\newcommand{\B}{\ensuremath{{\mathcal{B}}}}
\newcommand{\A}{\ensuremath{{\mathcal{A}}}}
\newcommand{\NNN}{\ensuremath{{\mathcal{N}}}}
\newcommand{\G}{\ensuremath{\mathcal{G}}}
\renewcommand{\P}{\ensuremath{{\mathcal{P}}}}
\newcommand{\I}{\ensuremath{{\mathcal{I}}}}
\newcommand{\M}{\ensuremath{\mathcal{M}}}
\newcommand{\HH}{\ensuremath{{\mathcal{H}}}}
\newcommand{\R}{\ensuremath{{\mathbb{R}}}}
\newcommand{\Rd}{\ensuremath{{\mathbb{R}^d}}}
\newcommand{\K}{\ensuremath{\mathcal{K}}}
\newcommand{\Z}{\ensuremath{\mathbb{Z}}}
\newcommand{\Zd}{\ensuremath{{\mathbb{Z}^d}}}
\newcommand{\om}{\ensuremath{\omega}}
\newcommand{\Oi}{\ensuremath{\Omega}_\infty}
\renewcommand{\t}{\ensuremath{\theta}}
\renewcommand{\T}{\ensuremath{\Theta}}
\renewcommand{\O}{\ensuremath{\Omega}}
\newcommand{\1}{\ensuremath{\mbox{\rm 1\kern-0.23em I}}}
\renewcommand{\L}{\ensuremath{\Lambda}}
\renewcommand{\l}{\ensuremath{\lambda}}
\newcommand{\eps}{\ensuremath{\varepsilon}}
\begin{document}

\title{Consistency of likelihood estimation for Gibbs point processes}

\author[1]{David Dereudre}
\author[2,3]{Fr\'ed\'eric Lavancier}
\affil[1]{Laboratoire de Math\'ematiques Paul Painlev\'e\\University of Lille 1, France}
\affil[2]{Laboratoire de Math\'ematiques Jean Leray\\
University of Nantes, France}
 \affil[3]{Inria, Centre Rennes  Bretagne Atlantique, France.}

\maketitle

\begin{abstract}

Strong consistency of the maximum likelihood estimator (MLE) for parametric Gibbs point process models is established.
The setting is very general.  It includes pairwise pair potentials, finite and infinite multibody interactions and geometrical interactions, where the range can be finite or infinite. The Gibbs interaction may depend linearly or non-linearly on the parameters, a particular case being  hardcore parameters and interaction range parameters. 
As important examples,  we deduce the consistency of the MLE for all parameters  of the Strauss model, the hardcore Strauss model, the Lennard-Jones model  and the area-interaction model.

  \bigskip

\noindent {\it Keywords.}  Parametric estimation ; Variational principle ; Strauss model ; Lennard-Jones model ; Area-interaction model

\end{abstract}

\section{Introduction}

Gibbs point processes are popular and widely used models in spatial statistics to describe the repartition of points or geometrical  structures in space. They initially arose from  statistical physics  where they are models for  interacting continuum particles, see for instance \cite{B-Pre76}. 
They are now used in as different domains as astronomy, biology, computer science, ecology, forestry,  image analysis and materials science.  
The main reason is that  Gibbs point processes provide a clear interpretation of the interactions between the points, such as attraction or repulsion depending on their relative position. We refer to \cite{chiu2013}, \cite{B-IllPenSto08}, \cite{B-MolWaa03} and \cite{vanlieshout2000} for 
classical text books on spatial statistics and stochastic geometry, including examples and applications of Gibbs point processes.

Assuming a parametric form of the Gibbs interaction, the natural method to estimate the parameters is likelihood inference.
A practical issue however is that the likelihood depends on  an intractable normalizing constant, called the partition function in the statistical physics literature, that has to be approximated. Some sparse data approximations were first proposed  in the 80's, e.g. in \cite{ogata1984}, before simulation-based methods have been developed \cite{geyer-moller1994}.  A comparative simulation study carried out in \cite{mateu_montes2001} demonstrates that Monte Carlo approximation of the partition function provides the best results in practice. To avoid the latter approximation, other estimation methods have been introduced, including pseudolikelihood and moments based methods, see the  books cited above. 
With modern computers, the Monte-Carlo approximation of the partition function is no longer an important issue and maximum likelihood estimation for spatial data is feasible and widely used in practice.

From a theoretical point of view, very few is known about the asymptotic properties of the maximum likelihood estimator (MLE) for Gibbs point processes. The asymptotic here means that the window containing the point pattern increases to the whole space $\R^d$. It is  commonly believed that the MLE is consistent and more efficient, at least asymptotically, than the other estimation methods.  This conviction has been supported by several simulation studies, ever since \cite{ogata1984}, see also \cite{diggle1994}. The present work is concerned with  the basic question of consistency. The latter is conjectured to hold in a very general setting but no proof were so far available in the continuous case of Gibbs point processes on $\R^d$. 
This is in contrast with the discrete case of  Gibbs interactions on a lattice, where consistency is established for most standard parametric models, regardless of the occurrence of phase transition (when the Gibbs measure is not unique), see \cite{kunsch1981} and \cite{Guyon}. In fact, the continuous case  is more  challenging in that most parametric Gibbs models involve irregular parameters as hardcore parameters (controlling the support of the measure) or interaction range parameters.  These specificities  result in a discontinuous likelihood contrast function, even asymptotically, and some further technical difficulties (for instance the true unknown Gibbs measure is not absolutely continuous with respect to the Gibbs measure associated to an estimation of the hardcore parameter).  
In an unpublished manuscript   \cite{mase02}, S.~Mase addressed  the consistency of the MLE for superstable and regular  pairwise  interactions  (a formal definition will be given later). His main tool was the variational principle for Gibbs processes, following the initial idea  developed in the discrete case in \cite{kunsch1981}. He restricted his study to pairwise interactions that are linear in their parameters,  which yields a convex contrast function. His result do not imply any restriction on the parameter space, thus including the possibility of  phase transition, but the parametric interactions considered in \cite{mase02} remain nonetheless rather restrictive and do not include hardcore or interaction range parameters.

We prove in this paper that the MLE is strongly consistent for a wide class of stationary Gibbs interactions, without any  major restriction on the parameter space. Our assumptions include finite and infinite-body interactions  with finite or infinite range, and we do not assume any continuity with respect to the parameters.  Our result covers in particular the consistency of the MLE of all parameters  of the Strauss model (including the range of interaction), the hardcore Strauss model (including the hardcore parameter), the Lennard-Jones model (including the exponent parameters), and the area-interaction process (including the radius of balls). An important ingredient of the proof is the variational principle, as in  \cite{kunsch1981} and \cite{mase02}, which guarantees the identifiability of the parameters. Our original contribution is the formulation of a minimum contrast result in presence of discontinuities and,  in order to apply it, new controls of physical quantities as the pressure and the mean energy with respect to the parameters.

Beyond consistency, the next natural question concerns the asymptotic distribution of the MLE.  This problem is even more arduous and we do not address it in the present paper. Nonetheless, let us briefly mention the state of the art on this question. In 1992, S.~Mase \cite{mase92}  proved that the MLE of regular parameters in a certain class of Gibbs interactions is uniform locally asymptotic normal. However his result, relying on strong cluster estimates,  is established for Gibbs measure generating very sparse point patterns, which implies restrictive conditions on the parameter space. In the same period, J.~Jensen \cite{jensen1993}  proved the asymptotic normality of the MLE under the Dobrushin uniqueness region, where the Gibbs process satisfies mixing properties. Here again,  as noticed in \cite{geyer-moller1994},  this assumption implies strong restrictions on the parameter space. Without these conditions, phase transition may occur  and some long-range dependence phenomena can appear. The MLE might then exhibit a non standard asymptotic behavior, in the sense that the rate of convergence might differ from the standard square root of the size of the window and the limiting law might be non-gaussian. 
To our knowledge, the only well understood example is the estimation of the inverse temperature in the Ising model on a square lattice studied in \cite{pickard87}. 
For this example, the asymptotic law of the MLE is always Gaussian and  the rate of convergence is standard except at  the critical temperature where it is faster.  Generalizing this result to other Gibbs models, especially in the continuous case,  is hazardous. The main reason is that the occurence of phase transition is in general not well understood, and this phenomenon can be of very different nature depending on the model. 
Characterizing the asymptotic distribution of the MLE in a general setting of Gibbs models still remain a challenging open question.

 The remainder of the paper is organized as follows. Parametric Gibbs point processes and the MLE procedure are described in Section~\ref{sec1}. Section~\ref{general section} contains our main result, namely minimal conditions on the Gibbs interaction to ensure strong consistency of the MLE. While the latter result is established in a very general setting, we present in Section~\ref{pair 	interaction} standard families of models where our main result applies. Specifically, we deal with  finite range pair potentials with or without hardcore (including the Strauss model and the hardcore Strauss model), infinite range  pair potentials (including the Lennard-Jones model), and infinite-body interactions (including the area-interaction model). Section~\ref{proofs} contains the proofs of our results.

\section{Gibbs point processes and the MLE}\label{sec1}

\subsection{ State space, reference measure and notation}

We consider the continuous space $\Rd$ of arbitrary dimension $d \ge 1$.   The 
Lebesgue measure on $\Rd$ is denoted by $\l^d$ and the symbol $\L$ will always refer to a bounded Borel subset of $\R^d$.
For $x\in\R^d$, $|x|$ denotes the Euclidean norm of $x$ while for $\L\subset\R^d$, $|\L|:=\l^d(\L)$.

A {\it configuration} is a subset $\om$ of $\Rd$ 
which is locally finite, meaning that $\om\cap\L$ has finite cardinality
$N_\L(\om):=\#(\om\cap\L)$ for every bounded Borel set $\L$. 
The space $\O$ of all configurations is equipped with the 
$\sigma$-algebra $\mathcal{F}$ generated by the counting variables $N_\L$.

For convenience, we  will often write $\om_\L$ in place of $\om\cap\L$  and $\om_{\L^c}$ for $\om\cap\L^c=\om\setminus\om_\L$. Similarly for every $\om$ and every $x$ in $\om$
we abbreviate $\om\cup\{x\}$ to $\om\cup x$ 
and   $\om\backslash\{x\}$ to $\om\backslash x$.

As usual, we take the reference measure on $(\O,\mathcal{F})$ 
to be the distribution $\pi$ of the Poisson point process 
with intensity measure $\l^d$  on $\Rd$.
Recall that $\pi$ is the unique probability measure
on $(\O,\mathcal{F})$ such that the following hold for all subsets $\L$:
(i) $N_{\L}$ is Poisson distributed with parameter $\l^d(\L)$, 
and (ii) given $N_\L=n$, 
the $n$ points in $\L$ are independent with uniform distribution on $\L$. 
The Poisson point process restricted to $\L$ will be denoted by $\pi_\L$.

Translation by a vector $u \in \Rd$ is denoted by $\tau_u$,
either acting on $\Rd$ or on $\O$. 
A probability $P$ on $\O$ is said stationary if  $P=P\circ\tau_u^{-1}$ for any $u$ in $\Rd$.  In this paper we consider only stationary probability measures $P$  with finite intensity $E_P(N_{[0,1]^d})$, where $E_P$ stands for the expectation with respect to $P$.  We denote by $\P$ the space of such probability measures. 

We denote by $\Delta_0$, $\I_n$ and $\L_n$ the following sets 
\begin{equation*}\label{sets} \Delta_0=[0,1)^d, \quad \I_n=\{-n,-n+1,\ldots, n-1\}^d \quad \text{ and } \L_n=\bigcup_{k\in\I_n} \tau_k(\Delta_0)=[-n,n)^d.\end{equation*}

In the following, some infinite range interaction processes will be considered. To ensure their existence, we must restrict the set of configurations to the so-called set of tempered configurations as in \cite{Ruelle70}. The definition of the latter may depend on the type of interactions at hand. In case of superstable pairwise interactions, it is simply defined by
$$\Omega_T=\{\omega\in\Omega;\; \exists t>0, \forall n\geq 1, \sum_{i\in \I_n} N^2_{ \tau_i(\Delta_0)}(\omega) \leq t (2n)^d\}.$$
Unless specified otherwise, this is the definition we consider in the sequel. From the ergodic theorem, any second order stationary measure on $\Omega$ is supported on $\Omega_T$, so the restriction to $\Omega_T$ is a mild assumption in practice.  

 Several other notation are introduced throughout the next sections. For convenience to the reader, we summarize the most important of them below.

\begin{tabular}{cl}
$\O$, $\O_T$ & space of configurations, tempered configurations respectively\\
$\omega_\L$, $\omega_{\L^c}$  & configuration $\om$ inside $\L$, outside $\L$ respectively\\
$\delta$, $\theta$ & parameters of the interaction where $\delta$ is the hardcore parameter\\
$I$, $\Theta$, $\mathcal K$ &  parameters space: $\delta\in I=[\delta_{\min},\delta_{\max}]$, $\theta\in\Theta$ and $\mathcal K\subset\Theta$ is compact\\
$\Omega^{\delta}_\infty$ & space of configurations with locally finite energy\\
$H^{\t}_\L (\om)$ & energy (or Hamiltonian) of $\omega$ in $\Lambda$\\
$f^{\delta,\t}_\L(\om)$  & conditional density of $\om$ inside $\Lambda$, given $\om_{\Lambda^c}$, see \eqref{parametric localdensity}\\
$Z^{\delta,\t}_\L$ & partition function associated to the free boundary condition, see \eqref{partition}\\
$\G^{\delta,\t}$ & set of stationary Gibbs measures associated to $\om\mapsto f^{\delta,\t}_\L(\om)$\\
$\G$ & $\G=\cup_{\delta\in I, \theta\in\T} \G^{\delta,\t}$\\
$\om^*$ & configuration associated to the true unknown parameters $\delta^*$ and $\t^*$\\
$K_n^{\delta,\t}(\om^*_{\L_n})$ & likelihood contrast function for the observation of $\om^*$ on $\Lambda_n$, see \eqref{contrastf}\\
$H^\t(P)$, $\mathcal I(P)$ & mean energy and specific entropy of $P\in\G$, see \eqref{def mean energy} and \eqref{defentropy}\\
$p(\delta,\theta)$ & pressure associated to $P^{\delta,\theta}\in\G^{\delta,\t}$, see \eqref{defpressure} 
\end{tabular}

\subsection{Gibbs point processes}
From a general point of view,  a  {\it family of interaction energies} is a collection $\HH = (H_\L)$,
indexed by bounded Borel sets $\L$, 
of measurable functions from $\O_T$ to $\R\cup\{+\infty\}$ such that
for every $\L \subset \L'$, 
there exists a measurable function 
$\varphi_{\L,\L'}$ from $\O_T$ to $\R\cup\{+\infty\}$ 
and  for every $\om\in\O_T$
\begin{equation}\label{compatible}
 H_{\L'}(\om)= H_{\L}(\om) + \varphi_{\L,\L'}(\om_{\L^c}). 
\end{equation}
 This decomposition  
is equivalent to (6.11) and (6.12) in \cite[p.\ 92]{B-Pre76}. In physical terms, $H_\L(\om)=H_\L(\om_\L\cup\om_{\L^c})$ represents 
the potential energy of the configuration $\om_\L$  inside $\L$ given 
the configuration $\om_{\L^c}$ outside $\L$. 
 In words, \eqref{compatible} is just a compatibility property stating that the difference between the energy on $\Lambda'$ and the energy on a subset $\Lambda$ only depends on the exterior configuration $\om_{\L^c}$.

As explained in introduction, we aim at  including  a large class of interactions in our study, to cover for instance infinite-body interactions like  the area-interaction process considered in Section~\ref{infinite body}. 
Nevertheless, most of standard parametric Gibbs models are pairwise interaction point processes. They are introduced below and we will come back to this important class of Gibbs models  in Section~\ref{pair interaction}.
 {\it Pairwise potential interactions} take the particular form 
\begin{equation}\label{defH}
H_\L (\om)=z\ N_\L(\om) + \sum_{\{x,y\}\in\om, \{x,y\}\cap \om_\L\neq\emptyset} \phi(x-y),\end{equation}
where $z>0$ is the intensity parameter and $\phi$ is the pair potential,  a function from $\R^d$ to  $\R\cup\{+\infty\}$ which is symmetric, i.e. $\phi(-x)=\phi(x)$ for all $x$ in $\R^d$.  In connection with \eqref{compatible}, if $H_\L$ satisfies \eqref{defH}, then $\varphi_{\L,\L'}(\om_{\L^c})=z\ N_{\L'}(\om_{\L^c}) + \sum_{\{x,y\}\in\om_{\L^c}, \{x,y\}\cap \L'\neq\emptyset} \phi(x-y)$.

Let us present two well-known examples.  We will use them through the paper to illustrate our notation and assumptions.\\

\noindent{\it Example 1}:
The Strauss pair potential, defined for some possible hardcore parameter $\delta\geq 0$, some interacting parameter $\beta\geq 0$, and some range of interaction $R\geq 0$ corresponds in \eqref{defH} to  
\begin{equation}\label{strauss}
\phi(x)=\begin{cases} \infty  & \text{if}\quad  |x|<\delta,\\ \beta & \text{if}\quad  \delta\leq |x|<R, \\0 & \text{if}\quad |x|\geq R,\end{cases}
\end{equation}
if $R>\delta$, while $\phi(x)= \infty\1_{  |x|<\delta}$ if $R\leq\delta$. As we will see later, the theoretical properties of this model strongly differ whether   $\delta=0$, which corresponds to the standard Strauss model, or $\delta>0$, which is the hardcore Strauss model.\\

\medskip

\noindent{\it Example 2}:
The general Lennard-Jones $(n,m)$ pair potential is defined for some $d<m<n$, and some $A>0$, $B\in \R$ by  \eqref{defH} with
\begin{equation}\label{lennard}
\phi(x)=A|x|^{-n}-B|x|^{-m},\quad x\in\R^d.\end{equation}
The standard Lennard-Jones model in dimension $d=2$ and $d=3$  corresponds to $n=12$ and $m=6$.\\

For a family of energies $\HH = (H_\L)$, we denote by $\Oi$ 
the space of configurations which have a locally finite energy, i.e.\ $\om\in\Oi$ if and only if, for any bounded Borel set $\L$,  $H_\L (\om)$ is finite.

The Gibbs measures $P$ associated to $\HH$ are defined through their local conditional specification, as described below. For every $\L$ and every $\om$ in $\Oi\cap \O_T$, the  conditional density  $f_\L$ of $P$ with respect to $\pi_\L$ is defined by
\begin{equation}
\label{localdensity}
f_\L(\om)  =  \frac{1}{Z_\L(\om_{\L^c})} e^{-H_\L (\om)},
\end{equation} 
where $ Z_\L(\om_{\L^c})$ is the normalization constant, or partition function,  given by 
$$Z_\L(\om_{\L^c})= \int e^{-H_\L(\om'_\L \cup \om_{\L^c})}\pi_\L(d\om'_\L).$$
Some regularity assumptions on $\HH$ are required to ensure that $0< Z_\L(\om_{\L^c})< +\infty$, implying  that the local density is well-defined. They will be part of our general hypothesis in Section~\ref{general section} and are fulfilled for all our examples in Section~\ref{pair interaction}.

We are now in position to define the Gibbs measures associated to $\HH$, see for instance  \cite{georgii}.
\begin{definition}
A probability measure $P$ on $\O$ is a {\it Gibbs measure}
for the family of energies $\HH$ 
if $P(\Oi\cap\O_T)=1$ and, for every bounded borel set $\L$, 
for any measurable and integrable function $g$ from $\O$ to $\R$,
\begin{equation}\label{DLR}
\int g(\om)P(d\om) = \int \int g(\om'_\L \cup \om_{\L^c}) f_\L(\om'_\L \cup \om_{\L^c}) \pi_\L({\rm d}\om'_\L) P(d\om).
\end{equation} 
Equivalently, 
for $P$-almost every $\om$ the conditional law of $P$ given $\om_{\L^c}$ 
is absolutely continuous with respect to $\pi_\L$ with the density $f_\L$ defined in \eqref{localdensity}.
\end{definition}

The equations (\ref{DLR}) are called the Dobrushin--Lanford--Ruelle (DLR)
equations.  
Conditions on $\HH$ are mandatory to ensure the existence of a measure $P$ satisfying \eqref{DLR}. For general interactions, we will later assume that both \eqref{localdensity} is well-defined and $P$ exists.  On the other hand, the unicity of $P$ does not necessarily hold, leading to  phase transition. As explained in introduction, our consistency results are not affected by this phenomenon.

\subsection{Parametric Gibbs models and the MLE procedure}\label{param Gibbs}
We consider a parametric Gibbs model that depends on a hardcore parameter $\delta\in I$, where $I=[\delta_{\min},\delta_{\max}]$ with $0\leq \delta_{\min}\leq \delta_{\max}  \leq \infty$, and on a parameter $\t\in\T$ with $\T\subset\R^p$ and  $\Theta\neq\emptyset$. We stress the dependence on these parameters by adding some superscripts to our notation, e.g. the partition function $Z_\L(\om_{\L^c})$ becomes $Z^{\delta,\t}_\L(\om_{\L^c})$ and the interaction energy $H_\L(\om)$ becomes $H^\t_\L(\om)$. In particular, the reason why we write  $H^\t_\L(\om)$ and not $H^{\delta,\t}_\L(\om)$ (and similarly for other quantities) is due to the fact that under our assumptions, the interaction energy will not depend on $\delta$,  as explained below.  

Specifically we assume that the conditional density writes for any $\delta\in I$ and any $\t\in\T$
\begin{equation}\label{parametric localdensity}
f^{\delta,\t}_\L(\om)  =  \frac{1}{Z^{\delta,\t}_\L(\om_{\L^c})} e^{-H^{\t}_\L (\om)}  \1_{\Oi^\delta}(\om),\quad \om\in\O_T,
\end{equation}
where 
$$\om\in\Oi^\delta  \Leftrightarrow \inf_{\{x,y\}\in\om} |x-y| \geq \delta,$$ 
while for any  $\theta\in\T$ and any $\om\in\O_T$, $H^{\t}_\L (\om)<\infty$. 
This specific parametric form clearly indicates that  the hardcore parameter $\delta$ only rules the support $\Oi^\delta$ of the measure, and  has no effect on  the interaction energy $H^\t_\L$. This is what happens for most models in spatial statistics, see Section~\ref{pair 	interaction}. A counter-example is  the Diggle-Gratton model \cite[Section 6]{diggle1984}, where the interaction energy depends also on $\delta$. This situation is not covered by our study.

Let $\G^{\delta,\t}$ be the set of stationary Gibbs measures  defined by the conditional density  \eqref{parametric localdensity} and with  finite intensity.  A minimal condition is to assume that for any value of the parameters, \eqref{parametric localdensity} is well-defined and that the latter set is not empty. \\

 \noindent {\bf [Existence]}:  For any  $\delta\in I$,  any  $\t\in\T$,  any bounded set $\L$ in $\R^d$ and any  $\om\in\Oi^\delta\cap\O_T$, 
 $Z_\L^{\delta,\theta}(\om_{\L^c})<\infty$ and  the set $\G^{\delta,\t}$ is not empty. \\

\noindent{\it Example 1 (continued)}: 
 For the hardcore Strauss model defined in \eqref{strauss} with $\delta>0$, the hardcore parameter is $\delta$ and the parameter $\theta$ corresponds to $\theta=(z,R,\beta)$. The assumption {\bf [Existence]} for this model holds if we take $I=\R_+$ and  $\T=\R_+\times\R_+\times \R$. On the other hand, for the standard Strauss model corresponding to the case $\delta=0$  in \eqref{strauss}, there is no hardcore parameter, meaning that $I=\{0\}$ and  the only parameter of the model is $\theta=(z,R,\beta)$. The existence of this model, i.e.  {\bf [Existence]}, holds iff $\theta$ belongs to $\T=\R_+\times\R_+\times \R_+$. Note that $\beta$ needs to be nonnegative when there is no hardcore, contrary to the hardcore case. We refer to \cite{Ruelle70} for the existence results involving stable and superstable pairwise potentials, a class of interactions which includes the Strauss model. 
 
 \medskip

 \noindent{\it Example 2 (continued)}: For the Lennard-Jones model defined in \eqref{lennard}, there is no hardcore parameter ($\delta=0$) and the parameter $\theta$ corresponds to $(z,A,B,n,m)\in \Theta\subset \R^5$ where $\Theta$ is just defined via the constraints $z\ge 0$, $A>0$ and $d<m<n$. The existence is also proved in \cite{Ruelle70}.\\

 Let $\delta^*\in  I$ and $\t^*$ in $\T$. We denote by $\om^*$ a realization of a Gibbs measure belonging to $\G^{\delta^*,\t^*}$.
The parameters $\delta^*$ and $\t^*$ represent the true unknown parameters that we want to estimate. 
%\sout{As usual, we assume for technical reasons that $\t^*$ is not on the boundary of $\T$, i.e. $\t^*\in\mathring\T$.}
 The MLE  of $(\delta^*,\t^*)$ from the observation of $\om^*$ on $\L_n$ is defined as follows.

%Let $P^*$ be a Gibbs measure in $\G^{\delta^*,\t^*}$ for unknown parameters $\delta^*\in  I$ and $\t^*$ in $\T$. For technical reasons, we assume that $\t^*$ is not on the boundary of $\T$, i.e. $\t\in\mathring\T$. We denote by $\om^*$ a realization of $P^*$. The MLE  of $(\delta^*,\t^*)$ from the observation of $\om^*$ on $\L_n$ is defined as follows.

\begin{definition}
Let $\mathcal K$ be a compact subset of $\T$ such that $\t^*\in\mathcal K$.
The MLE of $\delta^*$ and $\t^*$ from the observation of $\om^*$ in $\L_n$ is defined by
\begin{equation}\label{def MLE}
(\hat\delta_n, \hat\t_n)=\underset{(\delta,\t)\in I\times\mathcal K}{\emph{argmax}} f^{\delta,\t}_{\L_n}(\om^*_{\L_n}).
\end{equation}
\end{definition}

In Lemma \ref{MLEexistence} below, we give sufficient conditions which ensure the existence of the argmax in \eqref{def MLE}. Let us note that we consider here the MLE with free boundary condition, meaning that the configuration $\om^*_{\L^c_n}$ outside $\L_n$ is not involved. This is the most natural setting given that we observe $\om^*$ only on $\L_n$. A MLE procedure that depends on the outside configuration could have been considered as well and similar theoretical results would have been proved. To implement this alternative procedure in practice yet, the interaction has to be finite range so that the outside configuration reduces  to boundary effects that can be handled by minus sampling.  For  infinite range potentials, as considered in Section \ref{sectionLJ},  this method is not feasible and  the MLE with free boundary condition makes more sense. 
 
\medskip

We will use in the following the equivalent definition of the MLE
\begin{equation*}\label{contrast function}
(\hat\delta_n, \hat\t_n)=\underset{(\delta,\t)\in I\times\mathcal K}{\text{argmin}} K_n^{\delta,\t}(\om^*_{\L_n}),
\end{equation*}
where $K_n^{\delta,\t}$ is the contrast function
\begin{equation}\label{contrastf}
K_n^{\delta,\t}(\om^*_{\L_n})= \frac{\ln(Z^{\delta,\t}_{\L_n})}{|\L_n|}+ \frac{H^{\t}_{\L_n}(\om^*_{\L_n})}{|\L_n|} + \infty \1_{\tilde\delta_n (\om^*_{\L_n})<\delta}
\end{equation}
with  \begin{equation}\label{delta tilde}\tilde\delta_n (\om_{\L_n}) = \min_{\{x,y\}\in\om_{\L_n}} |x-y|\end{equation}
and
\begin{equation}\label{partition}Z^{\delta,\t}_\L =Z^{\delta,\t}_\L(\emptyset) =  \int e^{-H_\L^{\t}(\om_\L)} \1_{\Oi^\delta}(\om_\L)\pi_\L(d\om_\L).\end{equation}
In statistical mechanics, the first term in \eqref{contrastf} is called the finite volume pressure while the second term corresponds to the  specific energy.

In the following lemma we give an explicit expression for $\hat\delta_n$. 
\begin{lemme}\label{MLE hardcore}
Under   {\bf [Existence]},  the MLE of the hardcore parameter $\delta$ is 
$\hat\delta_n = \tilde\delta_n$ if $\tilde\delta_n\in I$ and $\hat\delta_n = \delta_{\max}$ otherwise, where $\tilde\delta_n$ is given by \eqref{delta tilde}, consequently
\begin{equation}\label{def hat theta}
 \hat\t_n=\underset{\theta\in \mathcal K}{\emph{argmin}}\ K_n^{\hat\delta_n,\t}(\om^*_{\L_n}).
 \end{equation}
\end{lemme}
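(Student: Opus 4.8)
The plan is to perform the minimization in \eqref{contrastf} with respect to $\delta$ for a fixed $\t\in\mathcal K$, to check that the optimal $\delta$ does not depend on $\t$, and thereby to decouple the joint minimization over $I\times\mathcal K$. Fix $\t\in\mathcal K$ and regard $\delta\mapsto K_n^{\delta,\t}(\om^*_{\L_n})$ as a function on $I$. The middle term $H^\t_{\L_n}(\om^*_{\L_n})/|\L_n|$ is constant in $\delta$ and finite, since $\om^*\in\O_T$ and, by \eqref{parametric localdensity}, $H^\t_{\L_n}<\infty$ there; it therefore plays no role in the $\delta$-minimization. The last term $\infty\,\1_{\tilde\delta_n<\delta}$ equals $+\infty$ as soon as $\delta>\tilde\delta_n$, so any finite value of $K_n^{\delta,\t}$, hence any minimizer, must satisfy $\delta\leq\tilde\delta_n$.

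First I would verify that the feasible set $\{\delta\in I:\ \delta\leq\tilde\delta_n\}$ is nonempty. Because $\om^*$ is a realization of a measure in $\G^{\delta^*,\t^*}$, whose conditional densities \eqref{parametric localdensity} vanish outside $\Oi^{\delta^*}$, we have $\om^*\in\Oi^{\delta^*}$, i.e. $\inf_{\{x,y\}\in\om^*}|x-y|\geq\delta^*$, and consequently $\tilde\delta_n\geq\delta^*\geq\delta_{\min}$ (the degenerate case $N_{\L_n}(\om^*)\leq 1$ gives $\tilde\delta_n=+\infty$ and is trivial). Thus $\{\delta\in I:\ \delta\leq\tilde\delta_n\}=[\delta_{\min},\min(\tilde\delta_n,\delta_{\max})]$ is a nonempty interval. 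On it the pressure term $\ln(Z^{\delta,\t}_{\L_n})/|\L_n|$ is well defined and finite: $Z^{\delta,\t}_{\L_n}<\infty$ by {\bf [Existence]}, while \eqref{partition} gives $Z^{\delta,\t}_{\L_n}\geq e^{-H^\t_{\L_n}(\emptyset)}\,\pi_{\L_n}(\{N_{\L_n}=0\})>0$ since the empty configuration lies in $\Oi^\delta$.

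The key step is the monotonicity of the pressure term. Since $\om\in\Oi^\delta$ iff $\inf_{\{x,y\}\in\om}|x-y|\geq\delta$, the sets $\Oi^\delta$ shrink as $\delta$ grows, so by \eqref{partition} the map $\delta\mapsto Z^{\delta,\t}_{\L_n}$ is non-increasing; for $\delta<\delta'$ its increment equals $\int e^{-H^\t_{\L_n}(\om_{\L_n})}\,\1_{\delta\leq\tilde\delta_n(\om_{\L_n})<\delta'}\,\pi_{\L_n}(d\om_{\L_n})$, which is strictly positive because the closest-pair distance of a Poisson configuration in a bounded window has an atomless law and $H^\t_{\L_n}$ is finite on the relevant configurations, so $Z^{\delta,\t}_{\L_n}$ is in fact strictly decreasing. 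Hence $\delta\mapsto K_n^{\delta,\t}(\om^*_{\L_n})$ is strictly decreasing on $[\delta_{\min},\min(\tilde\delta_n,\delta_{\max})]$ and jumps to $+\infty$ beyond $\tilde\delta_n$, so its minimum is attained precisely at $\delta=\min(\tilde\delta_n,\delta_{\max})$. Reading off the two cases: if $\tilde\delta_n\in I$, i.e. $\tilde\delta_n\leq\delta_{\max}$, this is $\tilde\delta_n$; otherwise $\tilde\delta_n>\delta_{\max}$ and it is $\delta_{\max}$. Since this minimizer is independent of $\t\in\mathcal K$, the joint minimization decouples, yielding $\hat\delta_n=\tilde\delta_n$ when $\tilde\delta_n\in I$ and $\hat\delta_n=\delta_{\max}$ otherwise, and then $\hat\t_n=\mathrm{argmin}_{\t\in\mathcal K}K_n^{\hat\delta_n,\t}(\om^*_{\L_n})$, which is \eqref{def hat theta}. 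The only genuinely delicate point is the strict monotonicity of $Z^{\delta,\t}_{\L_n}$, needed to pin down the minimizer as the stated value rather than one endpoint of a flat plateau; even without it, non-increasingness is immediate and selecting the largest minimizer already gives the identity $\hat\delta_n=\tilde\delta_n$ (truncated at $\delta_{\max}$) that drives the subsequent consistency proof.
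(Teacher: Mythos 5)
Your proposal is correct and follows essentially the same route as the paper: the indicator term in \eqref{contrastf} forces $\delta\le\tilde\delta_n$, the monotonicity of $\delta\mapsto Z^{\delta,\t}_{\L_n}$ (the sets $\Oi^\delta$ shrink as $\delta$ grows) makes $\delta\mapsto K_n^{\delta,\t}(\om^*_{\L_n})$ decreasing on the feasible set, and the minimizer $\min(\tilde\delta_n,\delta_{\max})$ is independent of $\t$, so the joint minimization decouples into \eqref{def hat theta}. Your additional checks (nonemptiness of the feasible set via $\tilde\delta_n\ge\delta^*\ge\delta_{\min}$, positivity and finiteness of $Z^{\delta,\t}_{\L_n}$, and strict decrease of the partition function to rule out a flat plateau) are refinements of details the paper dismisses as straightforward, not a different argument.
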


\begin{proof}
 If $\delta>\tilde\delta_n$, then $K_n^{\delta,\theta}(\om^*_{\L_n})=\infty$ and so $\hat \delta_n\leq\tilde\delta_n$. For any $\delta\leq\tilde\delta_n$ and any $\t\in\T$, $K_n^{\delta,\theta}(\om^*_{\L_n})=|\L_n|^{-1} (\ln(Z^{\delta,\t}_{\L_n}) + H^{\t}_{\L_n}(\om^*_{\L_n}))$. Note that if $\delta<\delta'$,  then $\Oi^{\delta}\subset \Oi^{\delta'}$ and from \eqref{partition}  the function $\delta\mapsto Z^{\delta,\t}_{\L_n}$ is decreasing.
Therefore, for any $\t\in\T$ and  for any $\delta\leq\tilde\delta_n$, $\delta\mapsto K_n^{\delta,\theta}(\om^*)$ is decreasing, proving that  $\hat \delta_n\geq\tilde\delta_n$. Hence $\hat \delta_n = \tilde\delta_n$ if $\tilde\delta_n\in I$.  The other  statements of Lemma~\ref{MLE hardcore} are straightforward.
\end{proof}\\

This lemma provides a useful result for practical purposes, since it states that the MLE for the hardcore parameter is just the minimal distance between pairs of points observed in the pattern, as it is usually implemented. Let us remark that a crucial assumption is that the space of parameters in {\bf [Existence]} is the cartesian  product $ I\times\T$, i.e. the parametric model is well-defined for any value of $\t\in\Theta$, whatever  $\delta$ is. This is the case for the examples considered in Section~\ref{pair interaction}. 

As explained in introduction, the computation of $\hat\t_n$ given by \eqref{def hat theta} is more difficult because there is 
in general no closed form expression for $Z^{\delta,\t}_\L$. In practice the optimization \eqref{def hat theta} is conducted from an approximated contrast function, based on Markov chain Monte Carlo methods where the error of approximation can be controlled, see for instance \cite{geyer-moller1994}.
Even to ensure the existence of $\hat \t_n$, we need some extra assumptions, as detailed in the following lemma.\\

\noindent {\bf [Argmax]}: 
For any $\om\in\O_T$ and any $n\ge 1$,  the function $\t\mapsto H^\t_{\L_n}(\om_{\L_n})$ is lower semicontinuous  over $\mathcal K$, i.e. for any $\t\in\mathcal K$ $\liminf_{\t'\mapsto \t} H^{\t'}_{\L_n}(\om_{\L_n})\ge H^\t_{\L_n}(\om_{\L_n})$,
 and there exists an upper semicontinuous version  $\t\mapsto \tilde H^\t_{\L_n}(\om_{\L_n})$ such that for every $\t\in \mathcal K$, $\tilde H^\t_{\L_n}= H^\t_{\L_n}$ $\pi_{\L_n}$-almost surely. \\

\noindent{\it Example 1 (continued)}:
For all $x\neq 0$, the function $\theta\mapsto \phi(x)$ in \eqref{strauss} is lower semicontinuous which ensures that the associated energy $\t\mapsto H^\t_{\L_n}$ for the Strauss model is lower semicontinuous as well. Changing the value of $\phi$  at the discontinuity points, it is easy to obtain an  upper semicontinuous version of the energy function which is $\pi_{\L_n}$-almost surely equal  to $H^\t_{\L_n}$. Therefore {\bf[Argmax]} holds for the Strauss model.

 \medskip

 \noindent{\it Example 2 (continued)}: {\bf [Argmax]} also holds for the Lennard-Jones model since the function $\theta\mapsto \phi(x)$ in \eqref{lennard} is continuous for any $x\neq 0$.

\begin{lemme}\label{MLEexistence} 
Under {\bf [Existence]} and {\bf [Argmax]}, the MLE of $\theta$ exists, i.e. there exists at least one $\hat\t_n\in\mathcal K$ such that 
$K_n^{\hat\delta_n, \hat\t_n}(\om^*_{\L_n}) =\min_{\t\in\mathcal K} K_n^{\hat\delta_n,\t}(\om^*_{\L_n})$. 
\end{lemme}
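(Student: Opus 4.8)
The plan is to fix the hardcore parameter at the value $\hat\delta_n$ furnished by Lemma~\ref{MLE hardcore} and then to show that the residual map $\t\mapsto K_n^{\hat\delta_n,\t}(\om^*_{\L_n})$ is lower semicontinuous on the compact set $\mathcal K$, so that it attains its minimum by the Weierstrass theorem for lower semicontinuous functions on a compact set. By Lemma~\ref{MLE hardcore} we have $\hat\delta_n\le\tilde\delta_n$ (the only remaining possibility, $\tilde\delta_n<\delta_{\min}$, makes $K_n^{\hat\delta_n,\cdot}$ identically $+\infty$ and the claim trivial), so the indicator term $\infty\1_{\tilde\delta_n<\hat\delta_n}$ in \eqref{contrastf} vanishes and it suffices to establish lower semicontinuity of
\begin{equation*}
g_n(\t):=\frac{\ln Z^{\hat\delta_n,\t}_{\L_n}}{|\L_n|}+\frac{H^\t_{\L_n}(\om^*_{\L_n})}{|\L_n|}.
\end{equation*}
The first half of {\bf [Argmax]} already gives that $\t\mapsto H^\t_{\L_n}(\om^*_{\L_n})$ is lower semicontinuous, and this term is finite under our assumptions; the work therefore lies in the partition-function term.

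The main step is to prove that $\t\mapsto Z^{\hat\delta_n,\t}_{\L_n}$ is lower semicontinuous. The key observation is that, in the integral \eqref{partition} defining $Z^{\hat\delta_n,\t}_{\L_n}$ against $\pi_{\L_n}$, the energy $H^\t_{\L_n}$ may be replaced by the upper semicontinuous version $\tilde H^\t_{\L_n}$ supplied by {\bf [Argmax]}, since the two agree $\pi_{\L_n}$-almost surely and the integral is unchanged. Because $\tilde H^\t_{\L_n}(\om_{\L_n})$ is upper semicontinuous in $\t$, for each fixed $\om_{\L_n}$ the integrand $e^{-\tilde H^\t_{\L_n}(\om_{\L_n})}\1_{\Oi^{\hat\delta_n}}(\om_{\L_n})$ is nonnegative and lower semicontinuous in $\t$. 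Applying Fatou's lemma along any sequence $\t_k\to\t$ then yields $\liminf_k Z^{\hat\delta_n,\t_k}_{\L_n}\ge Z^{\hat\delta_n,\t}_{\L_n}$, which is exactly the desired lower semicontinuity.

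To pass from $Z$ to $\ln Z$ I would check that $Z^{\hat\delta_n,\t}_{\L_n}\in(0,\infty)$: finiteness is guaranteed by {\bf [Existence]}, while positivity follows from the contribution of the empty configuration, $Z^{\hat\delta_n,\t}_{\L_n}\ge e^{-H^\t_{\L_n}(\emptyset)}\pi_{\L_n}(\{\emptyset\})>0$, using $H^\t_{\L_n}(\emptyset)<\infty$. Since $\ln$ is a continuous increasing bijection of $(0,\infty)$ onto $\R$, it preserves lower semicontinuity, so $\t\mapsto\ln Z^{\hat\delta_n,\t}_{\L_n}$ is lower semicontinuous. Consequently $g_n$ is a sum of two finite, lower semicontinuous functions, hence lower semicontinuous on the compact set $\mathcal K$, and therefore attains its minimum; this produces the required $\hat\t_n$.

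The crux I expect is the lower semicontinuity of the partition function: attacking $Z$ directly through the lower semicontinuity of $H$ goes the wrong way, since it would make $e^{-H^\t}$ upper semicontinuous and yield upper, not lower, semicontinuity of $Z$. The essential device is to invoke the second half of {\bf [Argmax]}, namely the upper semicontinuous representative $\tilde H^\t_{\L_n}$ agreeing with $H^\t_{\L_n}$ off a $\pi_{\L_n}$-null set. This is precisely why the two halves of the assumption are stated together: the lower semicontinuity controls the specific-energy term evaluated at the fixed observation $\om^*_{\L_n}$, while the upper semicontinuous version controls the pressure term, which is an average over $\pi_{\L_n}$.
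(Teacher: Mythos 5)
Your proof is correct and follows essentially the same route as the paper's: lower semicontinuity of the energy term comes directly from the first half of {\bf [Argmax]}, while lower semicontinuity of $\t\mapsto Z^{\hat\delta_n,\t}_{\L_n}$ is obtained by passing to the upper semicontinuous version $\tilde H^\t_{\L_n}$ and applying Fatou's lemma, exactly as in the paper's proof. The additional checks you make (positivity and finiteness of the partition function so that $\ln$ preserves lower semicontinuity, and the degenerate case where the indicator term makes the contrast identically $+\infty$) are details the paper leaves implicit, not a different argument.
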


\begin{proof}
It is sufficient to show that  $\theta\mapsto K_n^{\hat\delta_n,\t}(\om^*_{\L_n})$ is lower semicontinuous. As a direct consequence of {\bf [Argmax]}, $\t\mapsto H^\t_{\L_n}(\om^*_{\L_n})$ is lower semicontinuous. For  the finite volume pressure, this comes from {\bf [Argmax]} and the Fatou's lemma since
\begin{align*}
\liminf_{\t'\mapsto \t} Z_{\L_n}^{\hat\delta_n,\t'} & = \liminf_{\t'\mapsto \t} \int e^{-\tilde H^{\t'}_{\L_n}(\om_{\L_n})} \1_{\Oi^{\hat\delta_n}}(\om_\L) \pi_{\L_n}(d\om_{\L_n})\\
& \ge   \int  \liminf_{\t'\mapsto \t} e^{-\tilde H^{\t'}_{\L_n}(\om_{\L_n})} \1_{\Oi^{\hat\delta_n}}(\om_\L) \pi_{\L_n}(d\om_{\L_n})\\
& \ge  \int e^{-\tilde H^{\t}_{\L_n}(\om_{\L_n})}\1_{\Oi^{\hat\delta_n}}(\om_\L) \pi_{\L_n}(d\om_{\L_n})=  Z_{\L_n}^{\hat\delta_n,\t}.
\end{align*} 
\end{proof}

\section{Consistency of the MLE for general Gibbs interactions}\label{general section}

  We detail in this section  the minimal assumptions on the parametric family of Gibbs measure  that imply the strong consistency of the MLE. 
All of them are fulfilled by the examples of Section~\ref{pair interaction}.  

The first ones, {\bf [Existence]} and  {\bf [Argmax]} introduced in the previous section, imply the existence of a Gibbs measure for any $\delta\in I$ and $\t\in\T$, and the existence of the MLE.

We assume the following mild assumptions on the family of energies. They gather the stability of the energy function, a standard stationary decomposition in terms of mean energy per unit volume and  they deal with  boundary  effects.   We put $\G=\cup_{\delta\in I, \theta\in\T} \G^{\delta,\t}$.

\bigskip

\noindent {\bf [Stability]}: For any compact set $\mathcal K\subset\Theta$, there exists a constant $\kappa\ge 0$ such that for any $\L$, any $\t\in\mathcal K$ and any $\om\in\O_T$
\begin{equation}\label{eq:stability} H_\L^\t(\om_\L) \ge -\kappa N_\L(\om).\end{equation}

\bigskip

\noindent {\bf [MeanEnergy]}: There exist measurable functions $H^\theta_0$ and  $\partial H^\t_n$ from $\O_T$ to $\R$ such that for all $n\ge 1$ and all $\theta\in\T$
\begin{equation}\label{dec}
H^\t_{\L_n}=\sum_{k\in\I_n} H^\t_0\circ \tau_{-k} + \partial H^\t_{\L_n},
\end{equation}
where for all $P\in\G$ and all $\theta\in\T$ the {\it mean energy} of $P$ defined by 
\begin{equation}\label{def mean energy} H^\t(P)=E_P(H_0^\t)\end{equation}
is finite and  for any compact set $\mathcal K\subset\Theta$, for $P$-almost every $\om\in\O_T$, 
$$\lim_{n\to\infty} \frac{1}{|\L_n|} \sup_{\t\in\mathcal K} \left|\partial H^\t_{\L_n}(\om)\right| =0.$$

\bigskip

\noindent {\bf [Boundary]}: For all $P\in\G$, for any compact set $\mathcal K\subset\Theta$ and for $P$-almost every $\om\in\O_T$ 
 $$ \lim_{n\to\infty} \frac{1}{|\L_n|} \sup_{\t\in\mathcal K}  \left| H^\t_{\L_n}(\om_{\L_n})-H^\t_{\L_n}(\om)\right| =0.$$\\

To prove the consistency of the MLE, we need to control the asymptotic behaviour of the finite volume pressure and the specific energy in \eqref{contrastf}. The following assumption deals with the latter.  \\

\noindent {\bf [Regularity]}: 
For any $P$ in $\G$, for any compact set $\mathcal K\subset\Theta$, there exists a function $g$ from $\R_+$ to $\R_+$ with $\lim_{r\to 0} g(r)=0$  such that for any $\t\in\K$ and any $r>0$
\begin{equation}\label{ineq regener} 
 E_P\left(\sup_{
\begin{subarray}{c}
\t'\in\K\\
|\t-\t'|\le r
\end{subarray}} 
\left| H_0^\t-H_0^{\t'}\right| \right) \le g(r).
\end{equation}
Moreover there exists a bounded subset $\Lambda_0$ such that for  any $\eta>0$ and any $\t_0\in\mathcal K$,  there exists a finite subset $\NNN(\t_0)\subset B(\t_0,\eta)\cap\T$ and $r(\t_0)>0$ such that $r(\t_0)<\eta$ and for any $\Lambda\supset \Lambda_0$,
\begin{equation}\label{regenerstab} 
\max_{\t\in\NNN(\t_0)} \inf_{\t'\in B(\t_0,r(\t_0))\cap \K}\inf_{\om_\L\in \Oi^{\delta_{\min}}} \left(\frac{H_{\L}^{\t}(\om_{\L})-H_{\L}^{\t'}(\om_{\L})}{N_{\L}(\om_{\L})}\right)\ge -g(r(\t_0)).\end{equation}

\bigskip

Assumption \eqref{ineq regener} implies in particular that the mean energy $H^\t(P)$ is continuous with respect to $\t$. Let us note that this does not imply that the specific energy or $\t\mapsto H_0^\t$ is continuous with respect to $\t$. As  a counterexample, the Strauss model in Example~1, or more generally any model from  Sections~\ref{FRhardcore} and \ref{FRsanshardcore}, has a discontinuous specific energy but the mean energy is continuous, which is verified in the proof of Theorem~\ref{th FR}. Assumption \eqref{regenerstab} is related to the regularity of the coefficient $\kappa$ in {\bf[Stability]} with respect to the parameter $\theta$.

Finally the following assumption relates the pressure and the specific entropy defined below,  through the variational principle. The variational principle is conjectured to hold for all models of statistical mechanics even if it is not proved in such a general setting. 
 It is established for stable and finite-range interactions in \cite{DerStu} and for infinite-range pair potentials that are regular and non-integrably divergent at the origin  in \cite{georgii94b,georgii94}.  

 For any $\delta\in I$ and $\t\in\T$,  the {\it specific entropy} of  $P^{\delta,\theta}\in\G^{\delta,\t}$  is the limit
\begin{equation}\label{defentropy} \I(P^{\delta,\t})=\lim_{n\to \infty} \frac{1}{|\L_n|} \I_{\L_n}(P^{\delta,\t})\end{equation}
where $\I_{\L_n}(P^{\delta,\t})= \int \ln f_{\L_n}^{\delta,\t}(\om_{\L_n})P_{\L_n}^{\delta,\t}(d\om)$  is the finite volume entropy of $P^{\delta,\t}$. Note that this limit always exists, see \cite{georgii}.\\

\noindent {\bf [VarPrin]}:  For any $\delta\in I$ and any $\t\in\T$, the {\it pressure} defined as the following limit exists and is finite
\begin{equation}\label{defpressure} p(\delta,\t)=\lim_{n\to\infty} \frac{1}{|\L_n|} \ln(Z^{\delta,\t}_{\L_n}).\end{equation}
 Moreover  the map $\delta\mapsto p(\delta,\t)$ is right continuous on $ I$, i.e. $\lim_{\delta'\to\delta,\delta'\geq\delta} p(\delta',\theta)=p(\delta,\theta).$
In addition, for any $\delta,\delta'$  in $ I$  with $\delta'\geq\delta$, for any  $\t,\t'$ in $\T$, for all  $P^{\delta',\t'}\in\G^{\delta',\t'}$ the inequality
\begin{equation}\label{ineq}
p(\delta,\t)\ge -\I(P^{\delta',\t'})-H^\t(P^{\delta',\t'})
\end{equation}

holds. If $(\delta',\t')=(\delta^*,\t^*)$, then  equality holds in \eqref{ineq} if and only if  $(\delta, \t)=(\delta^*, \t^*)$.\\

The formulation of the variational principle presented here is slightly weaker than the classical one in statistical mechanics  where the inequality (\ref{ineq}) is required for any probability measure $P$ on $\O_T$, and not only for $P^{\delta',\t'}\in\G^{\delta',\t'}$. Note that  the right continuity of the pressure holds trivially when $\delta^*$ is known, i.e. $ I=\{\delta^*\}$. Let us also remark that the last part in {\bf [VarPrin]} concerning $(\delta^*,\theta^*)$ is just an identifiability assumption. 

\medskip

We are now in position to state the consistency of the MLE.

\begin{theoreme}\label{Th1}
Under the assumptions  {\bf [Existence]}, {\bf [Argmax]}, {\bf [Stability]}, {\bf [MeanEnergy]}, {\bf [Boundary]}, {\bf [Regularity]}  and  {\bf [VarPrin]},  for any $(\delta^*,\t^*)\in I\times\mathcal K$ and any $P\in\G^{\delta^*,\t^*}$, the MLE  $(\hat\delta_n,\hat \t_n)$ is well-defined and converges $P$-almost surely to $(\delta^*,\t^*)$ when $n$ goes to infinity.
\end{theoreme}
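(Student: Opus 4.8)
The plan is to read the MLE as a minimum-contrast estimator and to let the variational principle \textbf{[VarPrin]} identify the limiting contrast. Existence of $(\hat\delta_n,\hat\t_n)$ and the identity $\hat\delta_n=\tilde\delta_n$ are already granted by Lemmas~\ref{MLEexistence} and~\ref{MLE hardcore}, so only the convergence must be shown. Since the extreme points of the simplex $\G^{\delta^*,\t^*}$ are ergodic and still lie in $\G^{\delta^*,\t^*}$, the ergodic decomposition of $P$ reduces everything to the case where $P$ is ergodic, which I assume henceforth. I would first dispose of the hardcore parameter. As $P$ is supported on $\Omega_\infty^{\delta^*}$ one has $\tilde\delta_n\ge\delta^*$, and $\tilde\delta_n$ is non-increasing, so it suffices to exclude $\tilde\delta_n\downarrow\delta^*+\eps$ for some $\eps>0$. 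The event $\{\inf_{\{x,y\}\in\om}|x-y|\ge\delta^*+\eps\}$ is translation invariant, hence of $P$-probability $0$ or $1$ by ergodicity; applying the DLR equations \eqref{DLR} on a small box together with the strict positivity of $e^{-H^{\t^*}_\L}$ on $\Omega_\infty^{\delta^*}$ shows this event has probability strictly below $1$, thus $0$. Letting $\eps\downarrow0$ gives $\hat\delta_n=\tilde\delta_n\to\delta^*$ $P$-almost surely.

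Next I would identify the pointwise limit of the contrast. For fixed $(\delta,\t)$ the pressure term converges to $p(\delta,\t)$ by \textbf{[VarPrin]}, while \textbf{[Boundary]}, \textbf{[MeanEnergy]} and the spatial ergodic theorem give $|\L_n|^{-1}H^\t_{\L_n}(\om^*_{\L_n})\to E_P(H^\t_0)=H^\t(P)$. Evaluated at the true second argument $\t^*$ and at $\delta=\hat\delta_n\downarrow\delta^*$, the monotonicity of $\delta\mapsto Z^{\delta,\t}_{\L_n}$ exploited in Lemma~\ref{MLE hardcore} together with the right continuity of $\delta\mapsto p(\delta,\t^*)$ yields
\begin{equation*}
K_n^{\hat\delta_n,\t^*}(\om^*_{\L_n})\;\longrightarrow\; p(\delta^*,\t^*)+H^{\t^*}(P)=-\I(P),
\end{equation*}
the last equality being the equality case of \eqref{ineq} at $(\delta^*,\t^*)$. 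Moreover \eqref{ineq}, applied with $(\delta',\t')=(\delta^*,\t^*)$ and $\delta=\delta^*$, shows $\t\mapsto p(\delta^*,\t)+H^\t(P)\ge-\I(P)$ with equality only at $\t=\t^*$; this is the identifiability that pins down the limit.

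It remains to turn this pointwise picture into convergence of the argmin, where the discontinuity of the contrast in $\t$ forbids a uniform-convergence argument. Instead, for each fixed $\eps>0$ I would cover the compact set $\mathcal K\setminus B(\t^*,\eps)$ by finitely many balls and show that on each of them the contrast is eventually strictly above $-\I(P)$. Fix $\t_0\ne\t^*$ and use \textbf{[Regularity]}. The energy oscillation over a small ball is handled by inserting \eqref{dec}, discarding the boundary term through \textbf{[MeanEnergy]}, and applying the ergodic theorem to the integrable function $\sup_{|\t-\t'|\le r}|H^\t_0-H^{\t'}_0|$, whose $P$-expectation is at most $g(r)$ by \eqref{ineq regener}. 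The pressure is bounded below uniformly over $B(\t_0,r(\t_0))$ via the finite net $\NNN(\t_0)$ and \eqref{regenerstab}: the maximising $\t_1\in\NNN(\t_0)$ satisfies $H^{\t'}_{\L_n}\le H^{\t_1}_{\L_n}+g(r(\t_0))N_{\L_n}$ for every $\t'$ in the ball, whence $\inf_{\t'}Z^{\hat\delta_n,\t'}_{\L_n}\ge\int e^{-H^{\t_1}_{\L_n}-g(r(\t_0))N_{\L_n}}\1_{\Omega_\infty^{\hat\delta_n}}\,\pi_{\L_n}(d\om_{\L_n})$. Combining these two bounds with the monotonicity in $\delta$ and the right continuity of the pressure, I expect
\begin{equation*}
\liminf_{n\to\infty}\ \inf_{\t'\in B(\t_0,r(\t_0))\cap\mathcal K}K_n^{\hat\delta_n,\t'}(\om^*_{\L_n})\ \ge\ p(\delta^*,\t_1)+H^{\t_1}(P)-Cg(r(\t_0))
\end{equation*}
for a constant $C$ controlled by \textbf{[Stability]}. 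Since $\t_1\ne\t^*$ once $\eta$ is small, \eqref{ineq} makes the right-hand side strictly larger than $-\I(P)$ as soon as $r(\t_0)$ is small enough; a finite subcover then forces $\hat\t_n\in B(\t^*,\eps)$ for large $n$, and letting $\eps\downarrow0$ gives $\hat\t_n\to\t^*$.

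The hard part is this last step. Concretely, the uniform lower bound on the finite-volume pressure over an entire $\t$-ball is the delicate point, because the contrast is genuinely discontinuous in $\t$ and the pressure itself need not be continuous in $\t$; the factor $e^{-g(r)N_{\L_n}}$ appearing above must be absorbed into a change of Poisson intensity costing only $O(g(r))$ per unit volume, which relies on the finiteness and convexity of the pressure in the intensity direction ensured by \textbf{[Stability]}. This is exactly what the finite net $\NNN(\t_0)$ and assumption \eqref{regenerstab} are engineered to provide, and verifying that they indeed yield an $O(g(r))$ error, uniformly in $n$ and in the moving argument $\hat\delta_n$, is where I would expect the main work to lie.
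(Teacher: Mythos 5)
Your architecture is essentially the paper's: ergodic reduction, the DLR-plus-ergodicity argument for $\hat\delta_n$, pointwise identification of the limiting contrast $h^\t=p(\delta^*,\t)+H^\t(P)$ (with the moving $\hat\delta_n$ handled by monotonicity of $\delta\mapsto Z^{\delta,\t}_{\L_n}$ and right continuity of the pressure), identifiability through \eqref{ineq}, and a finite-net covering argument in which \eqref{ineq regener} plus the ergodic theorem controls the energy term and \eqref{regenerstab} controls the pressure term. However, the step you yourself flag as ``where the main work would lie'' is a genuine gap, and the mechanism you invoke does not close it. Jensen's inequality (equivalently, convexity of the log-Laplace transform in the direction of $N_{\L_n}$) turns your bound $\inf_{\t'}Z^{\delta,\t'}_{\L_n}\ge Z^{\delta,\t_1}_{\L_n}\,E_{P^{\delta,\t_1}_{\L_n}}\bigl(e^{-g(r)N_{\L_n}}\bigr)$ into $Z^{\delta,\t_1}_{\L_n}\,e^{-g(r)E_{P^{\delta,\t_1}_{\L_n}}(N_{\L_n})}$, so everything hinges on the moment estimate $E_{P^{\delta,\t}_{\L_n}}(N_{\L_n})\le C|\L_n|$, uniformly in $n$, in $\delta\in I$ and in the net points. \textbf{[Stability]} alone does not deliver this: estimating crudely with $H^{\t}_{\L_n}\ge-\kappa N_{\L_n}$ and $Z^{\delta,\t}_{\L_n}\ge e^{-|\L_n|}$ gives only an exponential-in-$|\L_n|$ bound on this expectation, and your ``change of Poisson intensity'' reformulation is circular, since the derivative of the tilted log-partition function in the intensity direction is again a first moment of $N_{\L_n}$ under a Gibbs measure of the same type. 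The missing idea is the two-sided entropy estimate inside the paper's Lemma~\ref{LemmeP}: stability plus $Z^{\delta,\t}_{\L_n}\ge e^{-|\L_n|}$ give the upper bound \eqref{eq1}, $\I_{\L_n}(P^{\delta,\t}_{\L_n})\le|\L_n|+\kappa E_{P^{\delta,\t}_{\L_n}}(N_{\L_n})$, while the variational (Legendre) characterization of the entropy tested against $(\kappa+1)N_{\L_n}\1_{N_{\L_n}\le C}$ gives the lower bound \eqref{eq2}; combining the two and letting $C\to\infty$ yields \eqref{controlN}, $E_{P^{\delta,\t}_{\L_n}}(N_{\L_n})\le e^{\kappa+1}|\L_n|$, which is exactly the $O(g(r))$ cost per unit volume you need, valid for every $(\delta,\t)$ and hence along $\hat\delta_n$.

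A second, smaller gap sits at the end of your covering argument. You conclude that since $\t_1\ne\t^*$, inequality \eqref{ineq} makes $p(\delta^*,\t_1)+H^{\t_1}(P)-Cg(r(\t_0))$ strictly larger than $-\I(P)$ ``as soon as $r(\t_0)$ is small enough''. But the net point $\t_1\in\NNN(\t_0)$ moves when you shrink $\eta$, so the gap $h^{\t_1}-h^{\t^*}>0$ and the error $Cg(r(\t_0))$ shrink together, and nothing you state prevents the former from vanishing faster than the latter. Breaking this circularity requires a locally uniform positive lower bound on $h^\t-h^{\t^*}$ away from $\t^*$, i.e.\ lower semicontinuity of $\t\mapsto h^\t$, which never appears in your proposal. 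The paper establishes it (assumption {\it iv)} of its Lemma~\ref{propC}) by applying \eqref{ineq} at $(\delta^*,\t)$ with some $P^{\delta^*,\t}\in\G^{\delta^*,\t}$ --- thus using \textbf{[Existence]} for every $\t$, not only $\t^*$ --- together with the continuity of $\t'\mapsto H^{\t'}(P^{\delta^*,\t})$ coming from \eqref{ineq regener}; lower semicontinuity and compactness of the set of near-minimizers then give the uniform gap that makes the finite cover conclusive.
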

 
\noindent{\it Example 1 (continued)}:   The Strauss and the hardcore Strauss models satisfy all the assumptions of Theorem \ref{Th1} which ensures the consistency of the MLE of $\delta^*, \beta^*$ and $R^*$.  The assumptions {\bf [Existence]} and {\bf [Argmax]} have already been discussed earlier and the stability assumption is obvious here since  $\phi\geq 0$. The assumptions {\bf [MeanEnergy]} and {\bf [VarPrin]} involve a variational characterisation of infinite volume Gibbs measures which is standard in  statistical mechanics \cite{DerStu}.  On the contrary, {\bf [Regularity]} is a technical assumption which is specific to the  study of consistency of the MLE. Its proof is given in Section \ref{details S1} when the hardcore parameter  $\delta^*$ is positive and  in Section \ref{details S2} when $\delta^*=0$. Note that the Strauss model belongs to the large class of  finite range pairwise interactions developed in details in Sections \ref{FRhardcore} and \ref{FRsanshardcore}.

 \medskip

 \noindent{\it Example 2 (continued)}: The Lennard-Jones model also satisfies the assumptions of Theorem \ref{Th1}. The assumptions {\bf [Existence]} and {\bf [Argmax]} have been checked in Section~\ref{param Gibbs}. The stability assumption is already known, see for instance Section 3.2 in \cite{Ruelle}, and is proved using the theory of positive type functions and Bochner's theorem. The assumptions  {\bf [MeanEnergy]} and {\bf [VarPrin]} are proved in \cite{georgii94b}, \cite{georgii94} which deal with the variational principle in the general setting of infinite range pairwise interactions. Again the {\bf [Regularity]} assumption is more specific and requires fine computations given in Section \ref{details LJ}. Note finally  that the Lennard Jones model belongs to the class of infinite range pairwise interactions studied in Section \ref{sectionLJ}.\\
 
 As illustrated in the two  examples above, all the assumptions of Theorem \ref{Th1}, excepted {\bf [Regularity]},  are natural and standard in statistical mechanics. They generally do not require much efforts to be checked since they are already proved  for many models in the literature. In fact, only Assumption {\bf [Regularity]} is really specific to the study of consistency of the MLE and needs a particular attention. Its proof strongly depends on the underlying model. Nonetheless, the techniques we develop in Sections \ref{details S1}, \ref{details S2} and \ref{details LJ}  in the cases of the Strauss, the hardcore Strauss and the Lennard Jones models (and their generalizations) provide tools and  strategies to deal with it. We think that they could be easily extended to the study of other models.

As a noticeable example stated in the following corollary, if the energy $H_\L^\theta$ depends  linearly on the parameter $\theta$, which is  called the exponential model, then the assumptions {\bf [Argmax]} and {\bf [Regularity]}  are automatically satisfied. Specifically, 
for any $1\le k\le p$, let $(H^k_\L)$ be a family of interaction energies satisfying  \eqref{compatible}. For any $\theta\in\Theta$ and any bounded set $\Lambda$ an exponential model takes the form 
\begin{equation}\label{energyexponentielle}
H_\L^{\theta}=\sum_{k=1}^{p} \theta_k H_\L^k.
\end{equation}
Let us introduce the following modification of {\bf [MeanEnergy]}.

\medskip

\noindent {\bf [MeanEnergy']}: The assumption {\bf [MeanEnergy]} holds with $H_0^\theta=\sum_{k=1}^{p} \theta_k H_0^k$ where $H_0^k$ comes from the decomposition \eqref{dec} applied to $H_{\L_n}^k$.

\begin{corollaire}\label{corollaireEM}
Assume that the family of interaction energies satisfy \eqref{energyexponentielle}  and that $\Theta$ is an open  subset of $\R^p$. Then under the assumptions  {\bf [Existence]}, {\bf [Stability]}, {\bf [MeanEnergy']}, {\bf [Boundary]} and {\bf [VarPrin]},  for any $(\delta^*,\t^*)\in I\times\mathcal K$ and any $P\in\G^{\delta^*,\t^*}$, the MLE  $(\hat\delta_n,\hat \t_n)$ is well-defined and converges $P$-almost surely to $(\delta^*,\t^*)$ when $n$ goes to infinity.
\end{corollaire}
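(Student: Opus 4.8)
My plan is to deduce Corollary~\ref{corollaireEM} from Theorem~\ref{Th1} by verifying that, in the exponential case \eqref{energyexponentielle} with $\T$ open, the two hypotheses of Theorem~\ref{Th1} that are \emph{not} assumed in the corollary, namely {\bf [Argmax]} and {\bf [Regularity]}, hold automatically (note {\bf [MeanEnergy]} is nothing but {\bf [MeanEnergy']}). The repeatedly used preliminary observation is that openness of $\T$ makes every component energy finite: for $\om\in\O_T$ and $\t_0\in\T$, the standing assumption gives $H^{\t_0}_\L(\om)<\infty$ and $H^{\t_0+\eps e_k}_\L(\om)<\infty$ for small $\eps$, and since these energies never take the value $-\infty$, $\eps H^k_\L(\om)=H^{\t_0+\eps e_k}_\L(\om)-H^{\t_0}_\L(\om)\in\R$, so each $H^k_\L(\om)$ is real. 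Hence $\t\mapsto H^\t_{\L_n}(\om_{\L_n})=\sum_k\t_k H^k_{\L_n}(\om_{\L_n})$ is a genuine affine function of $\t$ with finite coefficients, thus continuous; taking $\tilde H^\t_{\L_n}=H^\t_{\L_n}$ gives {\bf [Argmax]} at once. (Conceptually, linearity also makes $\t\mapsto K^{\delta,\t}_{\L_n}$ convex, since $\ln Z^{\delta,\t}_{\L_n}$ is a log-Laplace transform, which is the structural reason the exponential case is so regular.)

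For the first part \eqref{ineq regener} of {\bf [Regularity]} I would use $|H_0^\t-H_0^{\t'}|=|\sum_k(\t_k-\t'_k)H_0^k|\le r\sum_k|H_0^k|$ whenever $|\t-\t'|\le r$, so that $g_1(r)=r\sum_k E_P|H_0^k|$ works, provided each $E_P|H_0^k|$ is finite. This integrability is immediate here: finiteness of the mean energy in {\bf [MeanEnergy']} means $H_0^\t\in L^1(P)$ for every $\t\in\T$, so $H_0^{\t_0\pm\eps e_k}\in L^1(P)$, and
\begin{equation*}
H_0^k=\tfrac{1}{2\eps}\bigl(H_0^{\t_0+\eps e_k}-H_0^{\t_0-\eps e_k}\bigr)\in L^1(P).
\end{equation*}

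The main obstacle is the second part \eqref{regenerstab}: a \emph{single} point of the net must dominate the energy of all nearby $\t'$, uniformly over all configurations, while the direction of steepest energy increase depends on the configuration. The key idea is a geometric consequence of stability. Fix $\t_0\in\K$, pick $\rho>0$ with $\bar B(\t_0,\rho)\subset\T$, and let $\kappa'$ be the stability constant from \eqref{eq:stability} on a fixed compact neighbourhood of $\K$. Writing $H_\L=(H^1_\L,\dots,H^p_\L)$ and applying \eqref{eq:stability} at $\t_0+\rho v$ with the unit vector $v=-H_\L/|H_\L|$ gives
\begin{equation*}
\rho\,|H_\L(\om_\L)|\le H^{\t_0}_\L(\om_\L)+\kappa' N_\L(\om_\L),
\end{equation*}
i.e. the configuration-dependent gradient $H_\L$ is essentially aligned with $+\t_0$ once its norm is large. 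I would therefore take $\NNN(\t_0)=\{\t^*\}$ with $\t^*=\t_0+\rho\,\t_0/|\t_0|$, the point pushed along $\t_0$, so that for $\t'\in B(\t_0,r)$,
\begin{equation*}
H^{\t^*}_\L-H^{\t'}_\L=\tfrac{\rho}{|\t_0|}H^{\t_0}_\L+\langle\t_0-\t',H_\L\rangle\ge\Bigl(\tfrac{\rho}{|\t_0|}-\tfrac{r}{\rho}\Bigr)H^{\t_0}_\L-\tfrac{r\kappa'}{\rho}N_\L.
\end{equation*}
Choosing $r=r(\t_0)\le\rho^2/|\t_0|$ makes the coefficient of $H^{\t_0}_\L$ nonnegative, and bounding $H^{\t_0}_\L\ge-\kappa'N_\L$ yields $\bigl(H^{\t^*}_\L-H^{\t'}_\L\bigr)/N_\L\ge-\kappa'\rho/|\t_0|$. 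Setting $\rho=\sqrt{|\t_0|\,r}$ produces $g_2(r)=C\sqrt r\to0$, so \eqref{regenerstab} holds with this single net point; the degenerate case $\t_0=0$ (or small $|\t_0|$) is even easier, since then the displayed bound forces $|H_\L|/N_\L$ to be bounded and $\NNN(\t_0)=\{\t_0\}$ works. Taking $g=g_1+g_2$ validates {\bf [Regularity]}, and Theorem~\ref{Th1} then gives the conclusion.
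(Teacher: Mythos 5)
Your proposal is correct, and it follows the only possible skeleton (reduce to Theorem~\ref{Th1}, then check that {\bf [Argmax]} and {\bf [Regularity]} come for free in the exponential case), but your verification of the crucial inequality \eqref{regenerstab} runs along a genuinely different line from the paper's. The paper exploits exact linearity through a one-line rescaling identity: fixing $\epsilon>0$ with $\K\oplus B(0,\epsilon)\subset\T$ and taking the single net point $\t=(1+r/\epsilon)\t_0$, one has
$\frac{\epsilon}{r}\bigl(H^{\t}_\L-H^{\t'}_\L\bigr)=H^{\epsilon(\t-\t')/r}_\L$
for every $\t'\in B(\t_0,r)$, and since $\epsilon(\t-\t')/r=\t_0+\frac{\epsilon}{r}(\t_0-\t')\in B(\t_0,\epsilon)$, a single application of {\bf [Stability]} on the enlarged compact set gives $H^{\t}_\L-H^{\t'}_\L\ge-\frac{\kappa r}{\epsilon}N_\L$, i.e.\ a linear modulus $g(r)=\kappa r/\epsilon$ that is automatically uniform over $\t_0\in\K$ and needs no special treatment of $\t_0=0$ (then $\t=\t_0$). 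You instead apply stability in all directions around $\t_0$ to get the gradient-norm bound $\rho\,|H_\L|\le H^{\t_0}_\L+\kappa' N_\L$, then expand around the net point $\t_0+\rho\,\t_0/|\t_0|$ via Cauchy--Schwarz; this is correct, but it yields only a modulus of order $\sqrt r$ whose constant degrades as $|\t_0|\to0$, which is exactly why you need the case split on small $|\t_0|$ that you sketch (it does work, provided the threshold is taken relative to the fixed radius $\epsilon_0$ so that $g$ stays uniform over $\K$, as the assumption {\bf [Regularity]} requires a single $g$). What your route buys is robustness: linearity enters only through the existence of a gradient, so your argument is the natural one to mimic for smooth non-linear parametrizations -- it is close in spirit to condition \eqref{gradientdirection} and the proof of Theorem~\ref{th IR}. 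What the paper's route buys is brevity, a better (linear) modulus, and no case analysis. A further small merit of your write-up: the preliminary observations that openness of $\T$ forces each component $H^k_\L$ to be finite, and that {\bf [MeanEnergy']} forces $H_0^k\in L^1(P)$, carefully justify the steps ({\bf [Argmax]} and \eqref{ineq regener}) that the paper dismisses as obvious consequences of linearity.
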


\begin{remarque} 
 In  Theorem~\ref{Th1}, there is no topological assumption on the set  $\T$ except that $\T\neq\emptyset$.
 However, for identifiability reasons or in order to check  {\bf [Regularity]}, some assumptions on $\T$ will  often come out, depending on the model.  This is illustrated in Corollary~\ref{corollaireEM} where we assume $\T$  to be an open set. This assumption is usual and will still be used in the specific examples of the next section.  Nevertheless this hypothesis is  in general not necessary for consistency. For some models,  it is possible to check all assumptions of Theorem~\ref{Th1} when $\T$ is not open and $\theta^*$ is on the boundary of $\T$. As an example, it is not difficult to prove that the estimation of $\beta^*=0$ in the Strauss model (where $\delta^*=0$ and $R^*$ is known) is consistent, in which case $\t=\beta$, $\T=[0,\infty)$ and \eqref{energyexponentielle} is satisfied. This can be done as in the proof of Corollary~\ref{corollaireEM}, where  the assumption \eqref{regenerstab} in  {\bf [Regularity]} is verified by taking, for any $\eta>0$ and $\beta_0\geq 0$, $\NNN(\beta_0)=\{\beta_0+\eta/2\}$ and $r<\eta/2$. Note that the same conclusion is not true if in this model we try to estimate both $\beta^*$ and $R^*$ when $\beta^*=0$, because then $R^*$ is not identifiable. Finally, the study of consistency for parameters on the boundary of $\T$ is specific to the model at hand and it is difficult to draw general results in this case. 
   \end{remarque}

\section{Consistency for pairwise interactions and other examples}\label{pair 	interaction}

\subsection{Finite range  piecewise continuous pair potentials with hardcore}\label{FRhardcore}

In this section, we illustrate how the general result of Theorem~\ref{Th1} applies to the class of finite-range pairwise interactions with hardcore. 
The following framework gathers all potentials of this type described in  \cite{chiu2013}, \cite{B-IllPenSto08}, \cite{B-MolWaa03} and \cite{vanlieshout2000}, including the hardcore Strauss  model of Example 1 (detailed at the end of this section in Corollary~\ref{CorStraussHardcore}), the hardcore piecewise constant pairwise interaction model and  the Fiksel model.

Specifically, we consider conditional densities given by \eqref{parametric localdensity} with a non-vanishing hardcore parameter, i.e.  $\delta_{\min}>0$, and a Hamiltonian $H^\t_\L$  defined as in \eqref{defH} by  
 
\begin{equation}\label{defH2}
H^\t_\L (\om)=z\ N_\L(\om) + \sum_{\{x,y\}\in\om, \{x,y\}\cap \om_\L\neq\emptyset}  \phi^{\beta,R}(x-y),
\end{equation} 
where $z>0$, $\beta\in\R^p$ and $R=(R_1,\dots,R_q)$ is a multidimensional interaction parameter in $\R^q$ belonging to the subset   $\mathcal R=\{R\in\R^q,\ 0< R_1<\dots< R_q\}$. 
The pair potential interaction  $\phi^{\beta,R}$ is defined piecewise as follows.  For any $x\in\R^d$,
\begin{equation}\label{model FR}
\phi^{\beta,R} (x) = \begin{cases}  \1_{[0,R_1)}(|x|) \, \varphi_1^{\beta, R}(|x|)+\dots + \1_{(R_{q-1},R_q)}(|x|) \, \varphi_q^{\beta, R}(|x|), & \text{if } |x|\neq R_1,R_2,\ldots,R_q,\\
\min(\varphi_k^{\beta, R}(|x|),\varphi_{k+1}^{\beta, R}(|x|)), &\text{if } |x|=R_k \text{ for some } k,
\end{cases}
\end{equation}
with the convention $\varphi_{q+1}^{\beta,R}=0$.
 For any $k=1,\dots,q$,  we assume that there exists an open subset $\B$ of $\R^p$ such that 
 \begin{equation}\label{continu}
 (\beta,R,x)\mapsto \varphi_k^{\beta, R}(|x|) \text{ is continuous on } \B\times \mathcal R \times \R^d.
 \end{equation}
Note that from \eqref{model FR} and \eqref{continu}, the function $(\beta,R)\mapsto \phi^{\beta,R}(x)$ is lower semicontinuous which is crucial for the assumption {\bf [Argmax]}.

In this setting, the parameters to estimate are $\delta^*$ and $\theta=(z^*,\beta^*,R^*)$ that we assume belong to $I=[\delta_{\min},\infty)$ and $\T=\R_+\times \B \times \mathcal R$, respectively. For identifiability reasons, we further assume that
\begin{equation}\label{identifiabilite}
 R_1^*>\delta^* \quad \text{ and } \quad \lambda^d\left(x \in\R^d, \1_{[\delta^*,+\infty)}(|x|)\left(\phi^{\beta^*,R^*}(x)-\phi^{\beta,R}(x)\right)\neq 0\right)>0
\end{equation}
for any $(\beta,R)\neq(\beta^*,R^*)$.
This implies  that if $(\delta,\theta)\neq(\delta^*,\theta^*)$ then $\G^{\delta,\t}\cap\G^{\delta^*,\t^*}=\emptyset$ .

Notice that the  assumption $R_1^*>\delta^*$ is necessary to ensure identifiability of $R^*$ and $\delta^*$, but we do not need to add the constraint $R_1>\delta$ in the MLE optimisation  \eqref {def MLE}, as confirmed by the choice of  $\mathcal K$ in the theorem below. This is in particular important to agree with the setting of Lemma~\ref{MLE hardcore}, where it is  necessary that the optimisation  \eqref {def MLE} is carried out on  a cartesian product $ I\times\mathcal K$.

\begin{theoreme}\label{th FR}
Consider the Gibbs model defined by \eqref{parametric localdensity} with the Hamiltonian \eqref{defH2} and the pair potential \eqref{model FR}, under the assumptions $\delta_{\min}>0$, \eqref{continu} and \eqref{identifiabilite}. Let $\mathcal K$ be a compact subset of $\R_+ \times \B\times {\mathcal R}$ such that $(z^*,\beta^*,R^*)\in\mathcal K$. Then the MLE of $(\delta^*,z^*,\beta^*,R^*)$ given by \eqref {def MLE} is strongly consistent. 
\end{theoreme}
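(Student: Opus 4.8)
The plan is to obtain Theorem~\ref{th FR} as a direct application of the general consistency result Theorem~\ref{Th1}, by verifying that the model defined by \eqref{parametric localdensity}, \eqref{defH2} and \eqref{model FR} satisfies all seven assumptions with the parameter spaces $I=[\delta_{\min},\infty)$ and $\T=\R_+\times\B\times\mathcal R$. Several of these I would dispatch first since they are standard. {\bf [Existence]} follows from Ruelle's existence theory for stable finite-range pairwise potentials \cite{Ruelle70}. {\bf [Argmax]} is exactly the lower semicontinuity of $(\beta,R)\mapsto\phi^{\beta,R}(x)$ already noted after \eqref{continu}, together with the upper semicontinuous version obtained by redefining $\phi^{\beta,R}$ at the finitely many radii $|x|\in\{R_1,\dots,R_q\}$, a $\pi_{\L_n}$-null event. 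For {\bf [Stability]} I would use the hardcore: fixing a compact $\mathcal K$, the parameters $\beta$ and $R_q$ stay in a compact set, so by \eqref{continu} the potential is bounded below by some $-c$ on $\{|x|\le R_q\}$ and vanishes beyond $R_q$; since $\delta_{\min}>0$, a packing bound limits by a constant $M=M(\delta_{\min},R_q)$ the number of points of any admissible configuration within distance $R_q$ of a given point, whence $H^\t_\L(\om_\L)\ge-\kappa N_\L(\om)$ with $\kappa=cM/2$, using $z>0$.

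The decomposition {\bf [MeanEnergy]} and the control {\bf [Boundary]} both rest on the finite range. I would take $H_0^\t$ to be the energy attached to the reference cell $\Delta_0$, splitting each pair equally between the cells containing its endpoints; then $\partial H^\t_{\L_n}$ collects only the pairs straddling $\partial\L_n$ together with double-counting corrections, all involving points within distance $R_q$ of the boundary. As these concern $O(n^{d-1})$ cells while $|\L_n|=(2n)^d$, the ergodic theorem applied to the local energy (finite by {\bf [Stability]} and second-order stationarity on $\O_T$) yields the required $o(|\L_n|)$ bounds, uniformly over $\mathcal K$ because $\phi^{\beta,R}$ is uniformly bounded there. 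The same boundary estimate gives {\bf [Boundary]}.

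For {\bf [VarPrin]}, the existence and finiteness of the pressure \eqref{defpressure} and the variational inequality \eqref{ineq} are provided by the variational principle for stable finite-range interactions established in \cite{DerStu}. The right continuity of $\delta\mapsto p(\delta,\t)$ follows from the monotonicity of $\delta\mapsto Z^{\delta,\t}_{\L_n}$ noted in the proof of Lemma~\ref{MLE hardcore} together with the fact that $\1_{\Omega^{\delta'}_\infty}\to\1_{\Omega^\delta_\infty}$ $\pi$-almost surely as $\delta'\downarrow\delta$, the discrepancy being the $\pi$-null event $\{\inf_{\{x,y\}}|x-y|=\delta\}$. The identifiability clause is precisely where \eqref{identifiabilite} enters: by the full variational principle, equality in \eqref{ineq} at $(\delta^*,\t^*)$ means $P^{\delta^*,\t^*}\in\G^{\delta,\t}$, i.e. $\G^{\delta,\t}\cap\G^{\delta^*,\t^*}\neq\emptyset$, which \eqref{identifiabilite} rules out unless $(\delta,\t)=(\delta^*,\t^*)$.

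The genuine difficulty, which I expect to occupy the real work, is {\bf [Regularity]}; I would defer its verification to Section~\ref{details S1} (the case $\delta^*>0$, which holds here since $\delta_{\min}>0$). The modulus-of-continuity bound \eqref{ineq regener} is manageable: on $\{\delta^*\le|x|\}$ the potential is continuous in $(\beta,R)$ by \eqref{continu}, and its contribution to $H_0^\t$ involves a $P$-integrably bounded number of pairs via the hardcore and finite range, so dominated convergence produces a modulus $g$ with $\lim_{r\to0}g(r)=0$. The delicate point is the stability-type inequality \eqref{regenerstab}, which must bound from below the normalized difference $(H^\t_\L-H^{\t'}_\L)/N_\L$ uniformly over all admissible $\om_\L\in\Omega^{\delta_{\min}}_\infty$ and all large $\L$, near an arbitrary $\t_0$. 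Here the interaction-range parameter $R$ is the source of trouble, since perturbing $R$ shifts the discontinuity radii of $\phi^{\beta,R}$ and can create energy jumps that are not small pointwise; the remedy is to choose the finite net $\NNN(\t_0)$ and the radius $r(\t_0)$ so that the worst-case sign of these jumps is favorable, exploiting the piecewise structure \eqref{model FR}. This is exactly the model-specific computation of Section~\ref{details S1}, after which all hypotheses of Theorem~\ref{Th1} hold and the claimed strong consistency follows.
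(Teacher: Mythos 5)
Your overall strategy coincides with the paper's: both proofs proceed by checking the hypotheses of Theorem~\ref{Th1}, with the same cell decomposition for {\bf [MeanEnergy]}/{\bf [Boundary]}, the same treatment of {\bf [Existence]}, {\bf [Argmax]}, {\bf [Stability]}, and the same use of \eqref{identifiabilite} to get the identifiability clause of {\bf [VarPrin]}. However, there is a genuine gap in your verification of the right continuity of $\delta\mapsto p(\delta,\t)$. Your argument — monotonicity of $\delta\mapsto Z^{\delta,\t}_{\L_n}$ plus the fact that $\1_{\Oi^{\delta'}}\to\1_{\Oi^{\delta}}$ $\pi$-a.s.\ as $\delta'\downarrow\delta$ — only shows that, \emph{for each fixed $n$}, $Z^{\delta',\t}_{\L_n}\uparrow Z^{\delta,\t}_{\L_n}$, hence $p(\delta',\t)\le p(\delta,\t)$ and $\limsup_{\delta'\downarrow\delta}p(\delta',\t)\le p(\delta,\t)$. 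The missing half, $\liminf_{\delta'\downarrow\delta}p(\delta',\t)\ge p(\delta,\t)$, requires interchanging the limit in $n$ (which defines the pressure in \eqref{defpressure}) with the limit in $\delta'$, and fixed-volume convergence gives no uniformity in $n$ to justify this: for each $n$ one finds $\delta'_n>\delta$ making the finite-volume pressures close, but $\delta'_n$ may collapse to $\delta$ as $n\to\infty$. This is precisely the difficulty the paper works around with a different idea: the rescaling $\om\mapsto\delta\om$ converts the hardcore-$\delta$ model into a hardcore-$1$ model with a $\delta$-dependent, continuous-in-$\delta$ energy $\tilde H^{\delta,\t}$, giving $p(\delta,\t)=\delta^{-d}\tilde p(\delta,\t)-1+\delta^{-d}$; the variational principle of \cite{georgii94} applied to the rescaled family (legitimate for \emph{all} pairs $\delta,\delta'$ since every rescaled measure lives on $\Oi^1$) yields lower semicontinuity of $\delta\mapsto\tilde p(\delta,\t)$, and lower semicontinuity combined with monotone decrease gives right continuity. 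Without some such argument your verification of {\bf [VarPrin]} is incomplete, and this is not a removable technicality: it is the reason the paper needs part (b) of its proof at all.

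Two further points on {\bf [Regularity]}. First, your claim that on $\{|x|\ge\delta^*\}$ the potential is ``continuous in $(\beta,R)$'' is inaccurate: by \eqref{model FR} the map $R\mapsto\phi^{\beta,R}(x)$ jumps when some $R_k$ crosses $|x|$, so the dominated-convergence sketch for \eqref{ineq regener} must control the pairs whose mutual distance falls near the moving radii; the paper does this with a two-fold application of the GNZ formula and the Ruelle estimate \eqref{ruelle estimate}, which also delivers the modulus $g$ \emph{uniformly} in $\t\in\K$ as \eqref{ineq regener} demands (pointwise dominated convergence alone does not). Second, for \eqref{regenerstab} you correctly identify the key idea — choose the net $\NNN(\t_0)$ by shifting each $R_{0,k}$ by $\pm r$ according to the sign of the jump $\varphi_k^{\beta_0,R_0}(R_{0,k})-\varphi_{k+1}^{\beta_0,R_0}(R_{0,k})$, so that the induced energy differences are bounded below, and then use the hardcore to bound the number of interacting neighbours per point — but you defer the computation itself, so the part of the proof that is genuinely new in this paper remains unproved in your proposal.
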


 \begin{corollaire}[Hardcore Strauss model]\label{CorStraussHardcore}
Consider the  hardcore Strauss model of Example 1 with parameters $\delta^*\geq\delta_{\min}$ where $\delta_{\min}>0$, $z^*>0$, $\beta^*\in\R$ and $R^*>\delta^*$.  Let $\mathcal K$ be a compact subset of $\R_+\times\R\times\R_+$ such that $(z^*,\beta^*,R^*)\in\mathcal K$. Then the MLE of $(\delta^*,z^*,\beta^*,R^*)$ given by \eqref {def MLE} is strongly consistent.  
\end{corollaire}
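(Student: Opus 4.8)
The plan is to derive Corollary~\ref{CorStraussHardcore} as a direct specialization of Theorem~\ref{th FR}. The hardcore Strauss model is precisely a finite-range piecewise-continuous pair potential with hardcore, so the strategy is to verify that the Strauss potential \eqref{strauss} fits the framework \eqref{defH2}--\eqref{continu} and then invoke the theorem. First I would identify the pieces: here $q=1$ with a single range parameter $R=R_1>0$, so $\mathcal R=\{R\in\R:\ R>0\}=\R_+$, and the parameter $\beta\in\R$ lives in the open set $\B=\R\subset\R^p$ with $p=1$. The potential \eqref{strauss} restricted to the region $|x|\ge\delta$ (where the density is nonzero by the hardcore constraint $\om\in\Oi^\delta$) reads $\phi^{\beta,R}(x)=\beta\,\1_{[0,R)}(|x|)$, which matches \eqref{model FR} with $\varphi_1^{\beta,R}(|x|)=\beta$ and the convention $\varphi_2^{\beta,R}=0$. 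The hardcore part $\infty\1_{|x|<\delta}$ is exactly the indicator constraint $\1_{\Oi^\delta}$ absorbed into \eqref{parametric localdensity}, consistent with the model's splitting of the hardcore parameter $\delta$ from the energy $H^\t_\L$.

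Next I would check the two hypotheses of Theorem~\ref{th FR}. The continuity assumption \eqref{continu} holds trivially: the single function $(\beta,R,x)\mapsto\varphi_1^{\beta,R}(|x|)=\beta$ is constant in $R$ and $x$ and linear in $\beta$, hence continuous on $\B\times\mathcal R\times\R^d=\R\times\R_+\times\R^d$. The condition $\delta_{\min}>0$ is given in the corollary's hypotheses. It remains to verify the identifiability condition \eqref{identifiabilite}. The first part, $R_1^*>\delta^*$, is assumed in the corollary ($R^*>\delta^*$). For the second part I would argue that for $(\beta,R)\neq(\beta^*,R^*)$ the set where $\1_{[\delta^*,\infty)}(|x|)(\phi^{\beta^*,R^*}(x)-\phi^{\beta,R}(x))\neq 0$ has positive Lebesgue measure. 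Writing both potentials explicitly on $\{|x|\ge\delta^*\}$, one has $\phi^{\beta^*,R^*}-\phi^{\beta,R}=\beta^*\1_{[\delta^*,R^*)}(|x|)-\beta\1_{[\delta^*,R)}(|x|)$ (using $R^*,R>\delta^*$, which I may assume since the MLE range is $\ge\tilde\delta_n\to\delta^*$ asymptotically, or by restricting to the relevant comparison). A short case analysis on whether $R\neq R^*$ or $R=R^*,\beta\neq\beta^*$ shows the difference is a nonzero constant on an annulus of positive measure, giving the required positivity.

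Having confirmed all hypotheses, the conclusion of Theorem~\ref{th FR} applies verbatim: with $\theta=(z,\beta,R)\in\T=\R_+\times\R\times\R_+$ and $\mathcal K$ the prescribed compact subset containing $(z^*,\beta^*,R^*)$, the MLE of $(\delta^*,z^*,\beta^*,R^*)$ defined by \eqref{def MLE} is strongly consistent. The main obstacle I anticipate is the careful verification of \eqref{identifiabilite}: one must handle the boundary behaviour of the indicators at the thresholds $R,R^*$ and confirm that the comparison is genuinely over the region $|x|\ge\delta^*$ (so that the hardcore zone, where both potentials are effectively $+\infty$ and contribute nothing to the measure's support, is correctly excluded). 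Everything else reduces to matching notation and observing that the degenerate Strauss case fits the general piecewise-continuous template with a single interaction range.
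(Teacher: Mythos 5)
Your proposal is correct and follows essentially the same route as the paper: the paper's proof likewise specializes Theorem~\ref{th FR} by taking $q=p=1$, $\varphi_1^{\beta,R}\equiv\beta$, $\B=\R$, $\mathcal R=\R_+$, and noting that \eqref{continu} and \eqref{identifiabilite} hold (the paper simply declares them ``trivially true'' where you spell out the case analysis). One minor caution: in checking \eqref{identifiabilite} you should not ``assume'' $R>\delta^*$ for the competitor, since the condition must hold for all $(\beta,R)\neq(\beta^*,R^*)$ in the parameter space; the case $R\le\delta^*$ is handled directly because the difference then equals $\beta^*\1_{[\delta^*,R^*)}(|x|)$ on a positive-measure annulus (which, as in the paper, implicitly uses $\beta^*\neq 0$).
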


\begin{proof}
The hardcore Strauss model  corresponds to the Hamiltonian \eqref{defH2} and the pair potential \eqref{model FR} where   $q=p=1$ and $\varphi_1^{\beta,R}\equiv\beta$.  In this setting $\mathcal R=\R_+$. The assumptions \eqref{continu} and \eqref{identifiabilite} thus hold trivially true for $\B=\R$ and Theorem~\ref{th FR} applies.
\end{proof}

\subsection{Finite range  piecewise continuous pair potentials without hardcore}\label{FRsanshardcore}

We consider the same setting as in the previous section except that there is no hardcore,  i.e. $\delta^*=0$  and $\delta^*$ is known, meaning that it has not to be fitted, or equivalently $\delta_{\min}=\delta_{\max}=0$ implying  $ I=\{0\}$ in \eqref{def MLE}. 
The Hamiltonian $H^\t_\L$ is defined as in \eqref{defH2} and \eqref{model FR} with the same continuity assumptions  \eqref{continu}. The standard example we have in mind for this section is the Strauss model of Example~1 or any of the examples mentioned in the previous section without assuming the presence of a hardcore part.

Whereas this setting may appear to be simpler than the previous one from a statistical point of view (there is one parameter less to fit), it requires additional assumptions to ensure the existence of the model and to control its regularity.  This has been already pointed out for the Strauss model of Example 1 after the statement of {\bf[Existence]}. Consequently, we further assume  that the Hamiltonian $H^\t_\L$ satisfies {\bf[Stability]} and that for any $(\beta,R)$ the pair potential $\phi^{\beta,R}$ is superstable, which means that it is the sum of a stable pair potential plus a non  negative potential which is positive around zero (see \cite{Ruelle70}). The identifiability assumptions remain similar as in the previous section, that  \eqref{identifiabilite} with $\delta^*=0$. 

To handle the second part of {\bf[Regularity]}, we need a last hypothesis on the potential function. We assume that for any $(\beta_0,R_0)\in\B\times   \mathcal R$ and any $\eta>0$ there exists $\beta_1 \in B(\beta_0,\eta) \cap\B$ such that for any $k=1,\ldots,q$

\begin{equation}\label{localpositivity}
\varphi_k^{\beta_1,R_0}>\varphi_k^{\beta_0,R_0}.
\end{equation}

\begin{theoreme}\label{th FR2}
Consider the Gibbs model defined by \eqref{parametric localdensity} with the Hamiltonian \eqref{defH2} and the pair potential \eqref{model FR}, under the assumptions $\delta_{\min}=\delta_{\max}=0$, \eqref{continu} and \eqref{identifiabilite} where $\delta^*=0$. Assume further that the pair potential $\phi^{\beta,R}$ is superstable and \eqref{localpositivity}. Let $\mathcal K$ be a compact subset of $\R_+ \times \B\times {\mathcal R}$ such that $(z^*,\beta^*,R^*)\in\mathcal K$. Then the MLE of $(z^*,\beta^*,R^*)$ given by \eqref {def MLE} is strongly consistent. 
\end{theoreme}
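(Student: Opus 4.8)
The plan is to verify that all seven hypotheses of Theorem~\ref{Th1} hold for the model of Theorem~\ref{th FR2}, so that strong consistency follows immediately; this parallels the proof of Theorem~\ref{th FR}, the difference being the absence of hardcore ($\delta_{\min}=\delta_{\max}=0$, so $I=\{0\}$) and the additional structural assumptions (superstability, {\bf[Stability]} as a hypothesis, and \eqref{localpositivity}) needed to compensate for it. First I would dispatch the ``cheap'' assumptions. {\bf[Existence]} follows from the superstability of $\phi^{\beta,R}$ together with the classical existence results of \cite{Ruelle70}; {\bf[Stability]} is now assumed directly. {\bf[Argmax]} follows, exactly as in Example~1, from the lower semicontinuity of $(\beta,R)\mapsto\phi^{\beta,R}(x)$ noted after \eqref{model FR} and the continuity assumption \eqref{continu}, which also supplies an upper semicontinuous version by redefining $\phi$ at the finitely many discontinuity radii $|x|=R_k$ (a $\pi_{\L_n}$-null set). {\bf[MeanEnergy]} and {\bf[Boundary]} hold because \eqref{defH2} is a finite-range pairwise energy: the natural stationary decomposition \eqref{dec} assigns to $H_0^\t$ the per-cell energy, and the boundary term $\partial H^\t_{\L_n}$ and the difference $H^\t_{\L_n}(\om_{\L_n})-H^\t_{\L_n}(\om)$ are both controlled by interactions across $\partial\L_n$, which are $o(|\L_n|)$ by the ergodic theorem applied to the second-order stationary measure on $\O_T$.

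The identifiability clause of {\bf[VarPrin]} is handled by \eqref{identifiabilite} with $\delta^*=0$, which forces $\G^{\delta,\t}\cap\G^{\delta^*,\t^*}=\emptyset$ whenever $(\delta,\t)\neq(\delta^*,\t^*)$, and the variational principle inequality \eqref{ineq} together with the existence and finiteness of the pressure \eqref{defpressure} is the standard result for stable finite-range interactions from \cite{DerStu}. Here the right continuity of $\delta\mapsto p(\delta,\t)$ required in {\bf[VarPrin]} is automatic, since $I=\{0\}$ collapses that map to a single point.

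The main work, as the authors stress, is {\bf[Regularity]}. The first part \eqref{ineq regener}, continuity of the mean energy, follows from \eqref{continu}: on a compact $\mathcal K$ the potential $\phi^{\beta,R}$ is uniformly continuous away from the finitely many jump radii, so $\sup_{|\t-\t'|\le r}|H_0^\t-H_0^{\t'}|$ is dominated by a modulus $g(r)\to0$ after taking $E_P$-expectation, using the finite second moment guaranteed by temperedness to integrate the count of interacting pairs near a cell. The hard part is the second, one-sided bound \eqref{regenerstab}, and this is precisely where \eqref{localpositivity} enters. The difficulty is that without a hardcore the energy difference $H_\L^\t-H_\L^{\t'}$ divided by $N_\L$ is not controlled by a simple domination: pairs at distance near a discontinuity radius $R_k$ can make the numerator arbitrarily negative unless the perturbed potential dominates. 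The strategy is, given $\t_0=(z_0,\beta_0,R_0)$ and $\eta>0$, to choose the finite net $\NNN(\t_0)$ to consist of parameters built from the $\beta_1\in B(\beta_0,\eta)\cap\B$ supplied by \eqref{localpositivity}, so that $\varphi_k^{\beta_1,R_0}>\varphi_k^{\beta_0,R_0}$ on each piece; then for $\t'$ in a small ball $B(\t_0,r(\t_0))$ the comparison $H_\L^\t(\om_\L)-H_\L^{\t'}(\om_\L)$ is, up to a term controlled by the uniform continuity modulus $g(r(\t_0))$, a sum of nonnegative contributions, yielding the lower bound $-g(r(\t_0))$ uniformly over configurations $\om_\L\in\Oi^{\delta_{\min}}=\Oi^0$ and over $\L\supset\Lambda_0$. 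I would make the constants explicit by splitting each pair's contribution according to which interaction shell $(R_{k-1},R_k)$ it falls in, absorbing the shift of the range parameters $R$ into $g$ via \eqref{continu} and the strict positivity \eqref{localpositivity} into the sign of the dominant term. Once {\bf[Regularity]} is secured, Theorem~\ref{Th1} applies verbatim and delivers strong consistency of the MLE of $(z^*,\beta^*,R^*)$.
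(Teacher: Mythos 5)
Your overall architecture is exactly the paper's: all hypotheses of Theorem~\ref{Th1} except \eqref{regenerstab} are verified as in the proof of Theorem~\ref{th FR} (with {\bf[Stability]} now assumed, the variational principle taken from \cite{DerStu} for stable finite-range potentials, and right continuity of the pressure vacuous since $I=\{0\}$), and you correctly isolate \eqref{regenerstab} as the crux, correctly observe that per-pair error terms are no longer affordable without a hardcore, and correctly see that \eqref{localpositivity} must produce a net point whose potential \emph{dominates} the perturbed ones pointwise.

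However, your mechanism for obtaining that domination near the jump radii has a genuine gap. You build the net point from $\beta_1$ with $\varphi_k^{\beta_1,R_0}>\varphi_k^{\beta_0,R_0}$ on each piece and propose to absorb ``the shift of the range parameters $R$ into $g$ via \eqref{continu}''. But \eqref{continu} only controls the continuous pieces $\varphi_k$: at a radius $R_{0,k}$ where the potential has a genuine jump of size $J>0$, a perturbation $R'_k=R_{0,k}+s$ (with $s<r$) creates pairs at distances in $(R_{0,k},R'_k)$ for which the perturbed potential sits on one branch while a net potential with unshifted ranges sits on the other; the discrepancy is of order $J$, an $O(1)$ quantity that cannot be absorbed into $g(r)$. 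Nor can the margin from \eqref{localpositivity} dominate it: since $\beta_1\in B(\beta_0,\eta)$ and $\varphi_k$ is continuous in $\beta$, that margin tends to $0$ as $\eta\to 0$, while $J$ is fixed. Without a hardcore this is fatal, because the inf in \eqref{regenerstab} runs over all of $\Oi^{0}$: taking two clusters of $N/2$ points whose mutual distances lie in that thin annulus gives $\bigl(H^\t_\L(\om_\L)-H^{\t'}_\L(\om_\L)\bigr)/N_\L(\om_\L)$ of order $-N(J-o_\eta(1))$, which is unbounded below, so the inequality fails for the net as you have described it. The missing ingredient, which is how the paper's proof in Section~\ref{details S2} proceeds, is to give the net point \emph{shifted} ranges, displaced by $\pm r$ at each jump radius according to the sign of the jump (exactly the construction of Theorem~\ref{th FR}, supplemented by the sign conditions (v)--(vi) there), so that in every transition annulus the net potential takes the dominant branch. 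Combined with the $\beta_1$ margin on each piece, this yields the exact pointwise domination $\phi^{\beta_1,R}(x)\ge\phi^{\beta',R'}(x)$ for all $x$; the pair part of the energy difference is then nonnegative with no error at all, and the only remaining term is the per-point intensity difference $z_0-z'>-r$, which is what makes the bound compatible with configurations having arbitrarily many interacting pairs per point.
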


%\begin{theoreme}\label{th FR2}
%Let $\mathcal K$ be a compact subset of $(0,\infty)\times \mathring\B\times\mathring{\mathcal R}$ such that $(z^*,\beta^*,R^*)\in\mathcal K$. Then under the above setting, the MLE of $(z^*,\beta^*,R^*)$ given by \eqref {def MLE} is strongly consistent. 
%\end{theoreme}

This theorem can be easily applied to all standard finite range pairwise potentials (without hardcore) described in  \cite{chiu2013}, \cite{B-IllPenSto08}, \cite{B-MolWaa03} and \cite{vanlieshout2000}. In particular, condition~\eqref{localpositivity} turns out to be non-restrictive. Because of its central role in spatial statistics, we focus in the following corollary on the Strauss model of Example~1. 

%\begin{corollaire}[Strauss model]\label{CorMS}
%Consider the  Strauss model corresponding in \eqref{model FR} to $q=p=1$ and $\varphi_1^{\beta,R}\equiv\beta$ where $\beta \geq 0$ and $R>0$. Let $\mathcal K$ be a compact subset of $(0,\infty)^3$ such that $(z^*,\beta^*,R^*)\in\mathcal K$. Then the MLE of $(z^*,\beta^*,R^*)$ given by \eqref {def MLE} is strongly consistent.  
%\end{corollaire}

\begin{corollaire}[Strauss model]\label{CorMS}
Consider the Strauss model of Example 1 with parameters $z^*>0$, $\beta^*>0$, $R^*>0$ and there is no hardcore, i.e. $\delta^*=0$ is known.  Let $\mathcal K$ be a compact subset of $\R_+^3$ such that $(z^*,\beta^*,R^*)\in\mathcal K$. Then the MLE of $(z^*,\beta^*,R^*)$ given by \eqref {def MLE} is strongly consistent.  
\end{corollaire}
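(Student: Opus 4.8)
The plan is to realize the Strauss model without hardcore as a special case of the framework of Theorem~\ref{th FR2} and then merely verify its hypotheses, exactly in the spirit of the proof of Corollary~\ref{CorStraussHardcore}. First I would identify the parameters: the Strauss potential \eqref{strauss} with $\delta=0$ corresponds to the Hamiltonian \eqref{defH2} and the pair potential \eqref{model FR} with $q=p=1$, $R=R_1$ and $\varphi_1^{\beta,R}\equiv\beta$, so that $\phi^{\beta,R}(x)=\beta\,\1_{|x|<R}$. Since there is no hardcore we have $\delta_{\min}=\delta_{\max}=0$, which places us in the setting of Section~\ref{FRsanshardcore}. The continuity condition \eqref{continu} then holds trivially on the open set $\B=\R_+\setminus\{0\}$, because $\varphi_1^{\beta,R}\equiv\beta$ is constant in all its arguments; this choice of $\B$ is open and contains $\beta^*>0$, and I would take $\mathcal R=\R_+\setminus\{0\}$.

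Next I would dispatch the three extra hypotheses that Theorem~\ref{th FR2} requires beyond those of Theorem~\ref{th FR}. Assumption {\bf[Stability]} is immediate with $\kappa=0$: since $\phi^{\beta,R}\ge 0$ and $z\ge 0$ on $\mathcal K$, one has $H^\t_\L(\om_\L)\ge 0\ge -\kappa N_\L(\om)$. Superstability of $\phi^{\beta,R}$ is equally direct, as $\phi^{\beta,R}$ is nonnegative and strictly positive on a neighbourhood of the origin (where it equals $\beta>0$), hence it is the sum of the stable zero potential and a nonnegative potential that is positive around zero. Finally, condition \eqref{localpositivity} is verified by taking, for any $\beta_0>0$ and any $\eta>0$, the perturbation $\beta_1=\beta_0+\eta/2\in B(\beta_0,\eta)\cap\B$, which yields $\varphi_1^{\beta_1,R_0}=\beta_1>\beta_0=\varphi_1^{\beta_0,R_0}$.

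The only point that calls for genuine care, and hence the main obstacle, is the identifiability condition \eqref{identifiabilite} with $\delta^*=0$. The constraint $R_1^*>\delta^*$ reads $R^*>0$, which holds. For the Lebesgue-measure condition, since $\delta^*=0$ the factor $\1_{[0,\infty)}(|x|)$ is identically $1$, so I must show that $\phi^{\beta^*,R^*}\neq\phi^{\beta,R}$ on a set of positive measure whenever $(\beta,R)\neq(\beta^*,R^*)$. If $R\neq R^*$, the two potentials differ on the annulus lying between $R$ and $R^*$, where one equals $\beta^*>0$ and the other equals $\beta$ or $0$; this annulus has positive Lebesgue measure, and it is precisely here that the hypothesis $\beta^*>0$ is indispensable (without it $R^*$ would not be identifiable, as noted in the remark following Theorem~\ref{Th1}). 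If $R=R^*$ but $\beta\neq\beta^*$, the potentials differ throughout the ball $\{|x|<R^*\}$. In either case the differing set has positive measure, so identifiability holds. Once all these hypotheses are checked, Theorem~\ref{th FR2} applies directly and yields the strong consistency of the MLE of $(z^*,\beta^*,R^*)$, which is the desired conclusion.
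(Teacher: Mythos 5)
Your proposal is correct and follows essentially the same route as the paper: reduce to Theorem~\ref{th FR2} with $q=p=1$, $\varphi_1^{\beta,R}\equiv\beta$, take $\B$ and $\mathcal R$ to be the open set of positive reals, and verify \eqref{continu}, \eqref{identifiabilite}, {\bf[Stability]}, superstability, and \eqref{localpositivity} (the last via $\beta_1>\beta_0$), exactly as the paper does. Your only departure is that you spell out the identifiability check \eqref{identifiabilite} case by case (annulus when $R\neq R^*$, inner ball when $\beta\neq\beta^*$), which the paper dismisses as trivially true; this extra detail is accurate, up to the minor slip that in the case $R>R^*$ it is the positivity of $\beta$ (guaranteed by your choice of $\B$), rather than of $\beta^*$, that makes the potentials differ on the annulus.
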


\begin{proof}
Recall that the Strauss model  corresponds in  \eqref{model FR} to $q=p=1$ and $\varphi_1^{\beta,R}\equiv\beta$. As  in the proof of Corollary~\ref{CorStraussHardcore}, the assumptions \eqref{continu} and \eqref{identifiabilite} (where $\delta^*=0$) hold true with the choice $\B=\R_+$. Moreover the associated pair potential  $\phi^{\beta,R}$ is positive for any $\beta>0$ and $R>0$, which shows that  it is superstable and that {\bf[Stability]} holds true. Finally the assumption \eqref{localpositivity} is verified if we choose $\beta_1>\beta_0$. 
Therefore Theorem~\ref{th FR2} applies.
\end{proof}

\subsection{Infinite range pair potentials with a smooth parametrization}\label{sectionLJ}

In this section, we consider pairwise models  in the spirit of the Lennard-Jones model given in Example 2.  Our general setting concerns infinite range pair potentials without hardcore, i.e. $\delta_{\min}=\delta_{\max}=0$, that are uniformly regular and non-integrably divergent at the origin in the sense of Ruelle \cite{Ruelle70}. Specifically,  we assume that there exist two positive decreasing functions $\psi$ and $\chi$ from $\R_+$ to $\R$ and  $r_0>0$ with

\begin{equation*} \int_{r_0}^{+\infty} \psi(t)t^{d-1}dt<\infty, \qquad \int_0^{r_0} \chi(t)t^{d-1}dt=+\infty
\end{equation*}
such that for any parameter $\beta\in\B$, where $\B$ is an open set in $\R^p$, the function $\phi^\beta : \R^d\to \R$  satisfies for any $x\in\R^d$
\begin{equation}\label{integrabilitypsi} \phi^\beta(x)\ge \chi(|x|) \text{ whenever }|x|\le r_0 \quad \text{and} \quad |\phi^\beta(x)|\le \psi(|x|) \text{ whenever }|x|> r_0. 
\end{equation}

The Hamiltonian $H^\t_\L$ is defined as in \eqref{defH} by  
 
 \begin{equation}\label{hamilton_infinite}
H^\t_\L (\om)=z\ N_\L(\om) + \sum_{\{x,y\}\in\om, \{x,y\}\cap \om_\L\neq\emptyset}  \phi^{\beta}(x-y),\end{equation}
where $\theta=(z,\beta)$ is in  $\T=\R_+\times \B$.

We assume that $\phi^{\beta}$ is a symmetric function on $\Rd\backslash\{0\}$ and that for any $x\in\R^d\backslash\{0\}$, the map $\beta\mapsto \phi^\beta(x)$ is differentiable  on $\B$. Denoting by $\nabla\phi^\beta(x)$ its gradient,  we also assume that for any compact set $\K\subset\B$, for any $\beta\in\K$ and $x\in\R^d$ with $|x|>r_0$

\begin{equation}\label{gradientregular}
 |\nabla\phi^\beta(x)|\le \psi(|x|)
 \end{equation}
and 
\begin{equation}\label{goodexplosion}
\sup_{\beta\in\K} \sup_{x\in\R^d} \1_{|x|\le r_0} \sup_{\beta'\in\K} e^{-\phi^{\beta'}(x)}\max(\phi^\beta(x),|\nabla\phi^\beta(x)|) <\infty.
\end{equation}

In addition we suppose that for any compact set $\K\subset\B$ there exists an open set $U$ in $\R^p$ and a stable pair potential $\tilde \phi$ from $\R^d$ to $\R$ such that for any $u\in U$, any $x\in\R^d$ and any $\beta\in\mathcal K$

\begin{equation}\label{gradientdirection}
\nabla\phi^\beta(x).u \ge \tilde \phi(x),
\end{equation}
where $v.u$ denotes the scalar product of vectors $u,v$ in $\R^p$.

Finally, for identifiability reasons, we assume that for any $\beta\neq \beta'$ in $\B$,   

\begin{equation}\label{identifiabiliteLJ}
 \lambda^d \left(x \in\R^d, \phi^\beta(x)\neq \phi^{\beta'}(x)\right)>0.
 \end{equation}

\begin{theoreme}\label{th IR}
Let $(\phi^\beta)_{\beta\in\B}$ be a family of  pair potentials which are uniformly regular, non-integrably divergent at the origin and satisfy assumptions \eqref{gradientregular}-\eqref{identifiabiliteLJ}. Let $\mathcal K$ be a compact subset of $\T$ such that $\theta^*=(z^*,\beta^*)$ belongs to $\K$. Then the MLE of $(z^*,\beta^*)$ given by \eqref {def MLE} is strongly consistent. 
\end{theoreme}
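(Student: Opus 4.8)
The plan is to derive Theorem~\ref{th IR} from the general consistency result, Theorem~\ref{Th1}, by checking its hypotheses for the family $(\phi^\beta)_{\beta\in\B}$. Since here $\delta_{\min}=\delta_{\max}=0$, the hardcore set reduces to $I=\{0\}$, so $\hat\delta_n\equiv 0$ and the right-continuity of $\delta\mapsto p(\delta,\t)$ in \textbf{[VarPrin]} is automatic; only the genuine parameter $\t=(z,\beta)$ has to be estimated. Several hypotheses are standard and can be borrowed from the literature. \textbf{[Existence]} and \textbf{[Stability]} follow from Ruelle's theory \cite{Ruelle70,Ruelle}: a uniformly regular, non-integrably divergent potential in the sense of \eqref{integrabilitypsi} is superstable, which yields both the non-emptiness of $\G^{\t}$ and the lower bound \eqref{eq:stability}. \textbf{[Argmax]} is immediate, because for fixed $x\neq 0$ the map $\beta\mapsto\phi^\beta(x)$ is differentiable, hence continuous, so $\t\mapsto H^\t_{\L_n}$ is continuous and coincides with its own upper semicontinuous version. \textbf{[MeanEnergy]}, \textbf{[Boundary]} and the variational inequality in \textbf{[VarPrin]} are provided by Georgii's variational principle for infinite-range regular pair potentials \cite{georgii94b,georgii94}, while the identifiability clause of \textbf{[VarPrin]} follows from \eqref{identifiabiliteLJ}: if $\beta\neq\beta'$ then $\phi^\beta\neq\phi^{\beta'}$ on a set of positive Lebesgue measure, which forces $\G^{\t}\cap\G^{\t'}=\emptyset$.

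The substantial work is the verification of \textbf{[Regularity]}, the only assumption specific to the MLE. For the continuity estimate \eqref{ineq regener} I would bound $|H_0^\t-H_0^{\t'}|$ term by term. The intensity contribution is $|z-z'|\,N_{\Delta_0}$, whose $P$-expectation is $|z-z'|$ times the finite intensity of $P$ and hence is $O(r)$. For the pair contribution I would separate the pairs according to whether $|x-y|>r_0$ or $|x-y|\le r_0$. On the far region the mean value theorem in $\beta$ together with the gradient bound \eqref{gradientregular} gives $|\phi^\beta(x-y)-\phi^{\beta'}(x-y)|\le|\beta-\beta'|\,\psi(|x-y|)$; since $\psi$ is integrable against $t^{d-1}\,dt$ at infinity and the two-point correlation function of the superstable Gibbs measure $P$ obeys Ruelle bounds, the corresponding expectation is controlled by $C\,|\beta-\beta'|\int_{r_0}^{\infty}\psi(t)\,t^{d-1}\,dt$, which tends to $0$ with $r$. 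On the near region $|x-y|\le r_0$ the factor $e^{-\phi^{\beta'}(x-y)}$ appearing in \eqref{goodexplosion} is precisely the Gibbsian weight entering the correlation function, so \eqref{goodexplosion} bounds the integrand of the expected close-pair contribution uniformly; combined with the $|\beta-\beta'|$ prefactor from the mean value theorem this yields an $O(r)$ bound over the bounded region $\{|x-y|\le r_0\}$. Collecting the three pieces produces a modulus $g$ with $g(r)\to 0$, and the same computation shows $\t\mapsto H^\t(P)$ is continuous.

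The genuinely delicate point, which I expect to be the main obstacle, is the deterministic uniform lower bound \eqref{regenerstab}, where the energy difference must be controlled per point and uniformly over the unbounded configuration space $\Oi^{0}$, including configurations with arbitrarily close pairs. This is exactly the role of the stable-direction hypothesis \eqref{gradientdirection}. I would fix any bounded $\Lambda_0$, and for given $\eta>0$ and $\t_0=(z_0,\beta_0)$ build $\NNN(\t_0)$ from a single displaced point $\t=(z_0,\beta_0+\rho u)$, where $u\in U$ is the direction of \eqref{gradientdirection} and $\rho<\eta$, choosing $r(\t_0)\ll\rho$. Writing $\phi^{\beta_0+\rho u}(x-y)-\phi^{\beta'}(x-y)=\int_0^1\nabla\phi^{\cdot}(x-y)\cdot(\beta_0+\rho u-\beta')\,ds$ and using $\nabla\phi\cdot u\ge\tilde\phi$ from \eqref{gradientdirection}, the dominant term is $\rho\sum_{\{x,y\}}\tilde\phi(x-y)$, which is bounded below by $-\rho\,\kappa\,N_\L$ through the stability of $\tilde\phi$; the remainder, of order $r(\t_0)$ times $\sum_{\{x,y\}}\sup_\beta|\nabla\phi^\beta(x-y)|$, must be dominated using \eqref{gradientregular} on the far region and \eqref{goodexplosion} near the origin. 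After dividing by $N_\L(\om_\L)$ this gives the bound $-g(r(\t_0))$. The crux is that this estimate has to hold uniformly in the configuration, so the near-origin growth of $|\nabla\phi^\beta|$ at close pairs has to be absorbed by the stable-direction gain: balancing $\rho$ against $r(\t_0)$ so that the stability of $\tilde\phi$ controls the whole difference, consistently with the modulus $g$ from the first part, is where the assumptions \eqref{goodexplosion} and \eqref{gradientdirection} are used in an essential way. Once all hypotheses are verified, Theorem~\ref{Th1} applies and delivers the strong consistency of the MLE of $(z^*,\beta^*)$.
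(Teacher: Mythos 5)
Your overall strategy --- reduce to Theorem~\ref{Th1} and check its hypotheses --- is exactly the paper's, and your handling of \textbf{[Existence]}, \textbf{[Stability]}, \textbf{[Argmax]}, \textbf{[VarPrin]} and of the first half \eqref{ineq regener} of \textbf{[Regularity]} (mean value theorem, \eqref{gradientregular} for distant pairs, the Gibbs weight supplied by the GNZ formula combined with \eqref{goodexplosion} and Ruelle estimates for close pairs) matches the paper's proof. One minor caveat: \textbf{[MeanEnergy]} and \textbf{[Boundary]} cannot simply be quoted from \cite{georgii94b,georgii94}; the paper proves them by hand (splitting the boundary energy into close and distant pairs, using Ruelle's superstability estimates, the GNZ formula and the ergodic theorem), and the uniformity in $\beta\in\K$ required in \eqref{boundaryLJ} is precisely where that work is needed.

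The serious issue is your plan for \eqref{regenerstab}. You split $\phi^{\beta_0+\rho u}-\phi^{\beta'}$ into a dominant term $\rho\sum\tilde\phi$ plus a remainder of order $r(\t_0)\sum_{\{x,y\}}\sup_\beta|\nabla\phi^\beta(x-y)|$, to be ``dominated using \eqref{gradientregular} on the far region and \eqref{goodexplosion} near the origin''. This cannot be made to work: \eqref{regenerstab} is a deterministic inequality that must hold uniformly over all $\om_\L\in\Oi^{\delta_{\min}}$ with $\delta_{\min}=0$, hence over configurations containing arbitrarily many arbitrarily close pairs; for such configurations $\frac{1}{N_\L(\om_\L)}\sum_{\{x,y\}}\sup_\beta|\nabla\phi^\beta(x-y)|$ is unbounded, since $|\nabla\phi^\beta|$ diverges at the origin for Lennard--Jones-type potentials. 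Assumption \eqref{goodexplosion} only controls $e^{-\phi^{\beta'}(x)}|\nabla\phi^\beta(x)|$, i.e. the gradient damped by a Gibbs factor, and that factor is available only inside expectations via the GNZ formula (as in your correct proof of \eqref{ineq regener}); no expectation is taken in \eqref{regenerstab}, so no balancing of $r(\t_0)$ against $\rho$ rescues the remainder. The paper's argument produces no remainder at all: fix $u_0\in U$ and $\eps>0$ with $B(u_0,\eps)\subset U$, set $\beta=\beta_0+\frac{r}{\eps}u_0$ and $\NNN(\t_0)=\{(z_0,\beta)\}$; then for every $\beta'\in B(\beta_0,r)$ the \emph{full} increment satisfies $\frac{\eps}{r}(\beta-\beta')=u_0+\frac{\eps}{r}(\beta_0-\beta')\in B(u_0,\eps)\subset U$, so \eqref{gradientdirection}, applied at the intermediate point given by the mean value theorem, yields $\phi^{\beta}(x)-\phi^{\beta'}(x)\ge\frac{r}{\eps}\,\tilde\phi(x)$ pointwise for all $x$. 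Summing over pairs and invoking the stability constant $A$ of $\tilde\phi$ gives $H_\L^{\t}(\om_\L)-H_\L^{\t'}(\om_\L)\ge -r(1+A/\eps)\,N_\L(\om_\L)$, which is \eqref{regenerstab}. In short, the openness of $U$ is there precisely so that the perturbation $\beta_0-\beta'$ can be absorbed into the direction itself: the stable-direction hypothesis must be applied to the combined direction $\beta-\beta'$, not only to $u$, and once this is done there is nothing left to estimate.
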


As a fundamental example, the following corollary focuses on the Lennard-Jones model of Example 2. Let the  triangle domain   $\mathcal T=\{(n,m)\in\R^2, d<m<n\}$. 

\begin{corollaire}[Lennard-Jones model]
Consider the Lennard-Jones model of Example 2 given by \eqref{hamilton_infinite} and \eqref{lennard} with parameters $z>0$ and $\beta=(A,B,n,m)\in \B$ where $\B=\R_+ \times \R\times \mathcal T$. Let $\K$ be a compact subset of $\R_+\times \B$ such that $(z^*,\beta^*)\in\K$. Then the MLE of $(z^*,\beta^*)$ given by \eqref {def MLE} is strongly consistent. 
\end{corollaire}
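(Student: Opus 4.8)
The plan is to verify that the Lennard-Jones model fits into the general framework of Theorem~\ref{th IR}, by checking each of the assumptions \eqref{integrabilitypsi} and \eqref{gradientregular}--\eqref{identifiabiliteLJ} for the specific potential $\phi^\beta(x)=A|x|^{-n}-B|x|^{-m}$ with $\beta=(A,B,n,m)\in\B=\R_+\times\R\times\T$. Since Theorem~\ref{th IR} already delivers consistency once these hypotheses hold, the entire proof reduces to these verifications. First I would establish that $\phi^\beta$ is uniformly regular and non-integrably divergent at the origin: the dominant term near $0$ is $A|x|^{-n}$ with $A>0$ and $n>m>d$, so I would choose a comparison function $\chi(t)=c\,t^{-n}$ for small $t$, which satisfies $\int_0^{r_0}\chi(t)t^{d-1}dt=+\infty$ precisely because $n>d$. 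For the tail, the slower decay $B|x|^{-m}$ dominates, so I would take $\psi(t)=c'\,t^{-m}$, and $\int_{r_0}^\infty \psi(t)t^{d-1}dt<\infty$ holds since $m>d$. The main subtlety here is uniformity over a compact $\K\subset\B$: because $(n,m)$ range over a compact subset of the open triangle $\T$, the exponents are bounded away from $d$ and from each other, so a single pair $(\psi,\chi)$ works for the whole compact set. I would fix $r_0$ and the constants in terms of the extremal values of $A,B,n,m$ over $\K$.

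Next I would compute the gradient $\nabla\phi^\beta(x)$ with respect to $\beta=(A,B,n,m)$. The partial derivatives are $\partial_A\phi=|x|^{-n}$, $\partial_B\phi=-|x|^{-m}$, $\partial_n\phi=-A|x|^{-n}\ln|x|$ and $\partial_m\phi=B|x|^{-m}\ln|x|$; all are continuous on $\R^d\setminus\{0\}$, giving differentiability of $\beta\mapsto\phi^\beta(x)$. For \eqref{gradientregular}, on $|x|>r_0$ each component decays like $|x|^{-m}$ up to a logarithmic factor, and since over a compact $\K$ the exponents stay strictly above $d$, I can absorb the $\ln|x|$ factor by slightly decreasing the exponent in $\psi$, so $|\nabla\phi^\beta(x)|\le\psi(|x|)$ for a suitable $\psi$ (possibly enlarging the constant and shrinking the tail exponent, still keeping it above $d$). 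Assumption \eqref{goodexplosion} requires controlling $e^{-\phi^{\beta'}(x)}\max(\phi^\beta(x),|\nabla\phi^\beta(x)|)$ near the origin uniformly in $\beta,\beta'\in\K$; here the key point is that as $|x|\to0$, $\phi^{\beta'}(x)\to+\infty$ like $A'|x|^{-n'}$, so the exponential damping $e^{-\phi^{\beta'}(x)}$ decays super-polynomially and crushes the polynomial (and log-polynomial) growth of $\phi^\beta$ and $\nabla\phi^\beta$. I would argue that the product is bounded for small $|x|$ by uniformity of the lower bound on $\phi^{\beta'}$ over $\K$.

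The assumption I expect to be the most delicate is \eqref{gradientdirection}: I must exhibit an open set $U\subset\R^4$ and a single stable pair potential $\tilde\phi$ such that $\nabla\phi^\beta(x)\cdot u\ge\tilde\phi(x)$ for all $u\in U$, all $x$, and all $\beta\in\K$. The idea is to pick a direction $u=(u_A,u_B,u_n,u_m)$ for which the directional derivative is bounded below by a superstable-type potential. Writing $\nabla\phi^\beta(x)\cdot u=u_A|x|^{-n}-u_B|x|^{-m}-u_n A|x|^{-n}\ln|x|+u_m B|x|^{-m}\ln|x|$, I would aim to choose $u$ so that the $|x|^{-n}$ contribution (which dominates near the origin) has a positive coefficient, giving a lower bound that diverges to $+\infty$ at the origin and is bounded below elsewhere; such a potential is stable. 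The care needed is that near the origin the sign is governed by the $-u_n A|x|^{-n}\ln|x|$ term (since $\ln|x|\to-\infty$), so choosing $u_n>0$ makes this term $+\infty$, and a neighborhood $U$ of such a $u$ works by continuity, with $\tilde\phi$ taken as a common lower bound over $\K$ and over $U$. Finally, \eqref{identifiabiliteLJ} follows because two distinct parameter vectors $(A,B,n,m)\neq(A',B',n',m')$ yield functions $\phi^\beta,\phi^{\beta'}$ that differ on a set of positive Lebesgue measure: a finite combination of distinct power functions $|x|^{-n}$ cannot vanish on a set of full measure unless all coefficients match, which forces equality of the parameters. With all hypotheses checked, Theorem~\ref{th IR} yields the claimed strong consistency. \endproof
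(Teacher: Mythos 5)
Your proposal takes essentially the same route as the paper's proof: both reduce the corollary to checking the hypotheses of Theorem~\ref{th IR}, with uniform exponents $d<m_0<m$ and $d<n_0<n$ over the compact set so that a single pair $(\psi,\chi)$ works, absorption of the logarithmic factors coming from $\partial_n\phi$ and $\partial_m\phi$ into a slightly smaller tail exponent, exponential damping of the polynomial blow-up for \eqref{goodexplosion}, linear independence of power functions for \eqref{identifiabiliteLJ}, and, for \eqref{gradientdirection}, a direction $u$ with a positive coordinate in front of the exponent parameters (the paper fixes $u\in B((1,1,1,1),1/2)$, you only insist on $u_n>0$; both work, and incidentally your formula for $\nabla\phi^\beta$ has the correct sign on the $B$-component, where the paper's display contains a typo).

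There is, however, one genuine flaw, and it sits exactly at the step on which \eqref{gradientdirection} hinges: you justify stability of the lower bound $\tilde\phi$ by the criterion ``diverges to $+\infty$ at the origin and is bounded below elsewhere, hence stable.'' That criterion is false. For instance, $\tilde\phi(x)=\1_{[0,1)}(|x|)\,|x|^{-n}-\1_{[1,\infty)}(|x|)$ diverges at the origin and is bounded below by $-1$, yet $N$ points with pairwise distances at least $1$ have energy $-\binom{N}{2}$, so no bound of the form $H\ge-\kappa N$ can hold. Stability of a pair potential requires integrable decay of its negative part at infinity, not mere boundedness. The criterion you actually need --- and the one the paper invokes --- is Ruelle's: a pair potential that is regular (dominated in absolute value, outside some ball, by a decreasing function $\psi$ with $\int\psi(t)t^{d-1}dt<\infty$) and non-integrably divergent at the origin is superstable, hence stable. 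Your candidate lower bound does satisfy this, because every term of $\nabla\phi^\beta(x)\cdot u$ decays at infinity like $\ln(|x|)\,|x|^{-m}$ with $m>m_0>d$ uniformly over $\K$ and over $u$ in a bounded neighborhood; using also the uniform gap $n>m+\delta$ near the origin, one gets
$\nabla\phi^\beta(x)\cdot u\ \ge\ c\,\1_{[0,1]}(|x|)\,|x|^{-n_0}-c\,\1_{(1,\infty)}(|x|)\ln(|x|)\,|x|^{-m_0}=:\tilde\phi(x)$
(possibly after subtracting a constant on a bounded annulus, which affects neither regularity nor the divergence at the origin), and this $\tilde\phi$ is regular and non-integrably divergent at the origin, hence stable. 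So the gap is fixable with material already present in your first two paragraphs, but as written the stability claim is the one step of the proof that would fail.
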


\begin{proof}
Denote by $\K_{\mathcal T}$ the projection of $\K$ onto $\mathcal T$, i.e. $(n,m)\in\K_{\mathcal T}$ if and only if there exists $z>0$, $A>0$ and $B$ such that $(z,A,B,n,m)\in\K$.
It is easy to find $r_0>0$ and two positive constants $c_0$, $c_1$ such that $\phi^\beta$ given by \eqref{lennard} is uniformly regular and non-integrably divergent at the origin for $r_0$, $\psi(t)=c_0 t^{-m_0}$ and $\chi(t)=t^{-n_0}$ with $d<m_0<m$ and $d<n_0<n$ for all $(n,m)\in \K_{\mathcal T}$. 

The map $\beta \mapsto \phi^\beta$ is clearly differentiable on $\B$ with 

$$ \nabla \phi^\beta(x)=\left(|x|^{-n},|x|^{-m},-A\ln(|x|)|x|^{-n},B\ln(|x|)|x|^{-m}\right).$$

In adjusting the constant $c_0$ in the definition of $\psi$, we show that assumption \eqref{gradientregular} holds. Assumptions \eqref{goodexplosion} and \eqref{identifiabiliteLJ} are obvious. It remains to show \eqref{gradientdirection}. Let $u_0$ be the vector $(1,1,1,1)$ in $\R^4$ and $u$ any vector in the open ball $B(u_0,1/2)$ in $\R^4$. Denoting $\K_\B$ the projection of $\K$ onto $\B$, we find that for any $\beta\in\K_\B$ and $x\in\R^d$ 
\begin{equation*}
\nabla\phi^\beta(x).u \ge \frac{1}{2}\1_{[0,1]}(|x|)\Big(|x|^{-n}-|b_0\ln(|x|)||x|^{-m}\Big)-\frac{3}{2}\1_{(1,+\infty)}(|x|)(a_1+|b_1|)\ln(|x|)|x|^{-m}
\end{equation*}
where $a_1$, $b_0$ and $b_1$ are chosen so that for any $A$ and $B$, $A<a_1$ and $b_0<B<b_1$. There exists $\delta>0$ such that for all $(n,m)\in \K_{\mathcal T}$, $m<n-\delta$, whereby 
\begin{equation*}
\nabla\phi^\beta(x).u \ge c \1_{[0,1]}(|x|)|x|^{-n_0} - c \1_{(1,+\infty)}(|x|) \ln(|x|)|x|^{-m_0}
\end{equation*}
for some $c>0$. The right hand term of this inequality is regular non-integrably divergent at the origin. Therefore it is stable and \eqref{gradientdirection} is proved.
 \end{proof}

\medskip

We can also deduce from  Theorem~\ref{th IR} the same kind of result as in \cite{mase02} concerning exponential models. 
Note however that  the proof in \cite{mase02}  crucially relies on  the convexity of $\beta\mapsto\phi^\beta(x)$, whereas 
the following corollary is obtained by  different techniques.

\begin{corollaire}
Assume that in  \eqref{hamilton_infinite} the family of pair potentials $(\phi^\beta)_{\beta\in\B}$ is uniformly regular, non-integrably divergent at the origin, satisfies the identifiability condition \eqref{identifiabiliteLJ} and that 
$$\phi^\beta(x)=\sum_{i=1}^p \beta_i \phi_i(x)$$
where $\phi_1(x)\geq\chi(|x|)$ if $|x|<r_0$ while for $i\geq 2$, $\phi_i(x)=o(\phi_1(x))$ when $x\to 0$.
 Let $\mathcal K$ be a compact subset of $\R_+\times \B$ such that $\theta^*=(z^*,\beta^*)$ belongs to $\K$. Then the MLE of $(z^*,\beta^*)$ given by \eqref {def MLE} is strongly consistent. 
\end{corollaire}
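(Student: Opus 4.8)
The plan is to verify the hypotheses \eqref{gradientregular}--\eqref{identifiabiliteLJ} of Theorem~\ref{th IR} and then invoke that theorem directly. By assumption the family $(\phi^\beta)$ is uniformly regular, non-integrably divergent at the origin, and satisfies the identifiability condition \eqref{identifiabiliteLJ}, so these come for free. The linear form $\phi^\beta(x)=\sum_{i=1}^p\beta_i\phi_i(x)$ makes $\beta\mapsto\phi^\beta(x)$ differentiable on $\B$ with gradient $\nabla\phi^\beta(x)=(\phi_1(x),\dots,\phi_p(x))$, which is \emph{independent of $\beta$}; this simplifies the three remaining conditions considerably.

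First I would record that $\beta_1>0$ for every $\beta\in\B$. Indeed, near the origin $\phi^\beta(x)=\phi_1(x)\bigl(\beta_1+\sum_{i\ge 2}\beta_i\phi_i(x)/\phi_1(x)\bigr)=\phi_1(x)(\beta_1+o(1))$ since $\phi_i=o(\phi_1)$, while non-integrable divergence forces $\phi^\beta(x)\ge\chi(|x|)\to+\infty$; this is incompatible with $\beta_1\le 0$, so on the compact projection $\K_\B$ of $\K$ there is $c>0$ with $\beta_1\ge c$. To check \eqref{gradientregular} I isolate each $\phi_i$: as $\B$ is open, for fixed interior $\beta^{(0)}$ and some small $t>0$ both $\beta^{(0)}$ and $\beta^{(0)}+te_i$ lie in $\B$, so $t\phi_i=\phi^{\beta^{(0)}+te_i}-\phi^{\beta^{(0)}}$, and \eqref{integrabilitypsi} gives $|\phi_i(x)|\le (2/t)\psi(|x|)$ for $|x|>r_0$; summing yields $|\nabla\phi^\beta(x)|\le C\psi(|x|)$, and \eqref{gradientregular} holds after rescaling the constant in $\psi$. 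For \eqref{goodexplosion}, near the origin $\max(\phi^\beta,|\nabla\phi^\beta|)\le C'\phi_1$ (again by $\phi_i=o(\phi_1)$, using $\phi_1\ge\chi>0$), while $\phi^{\beta'}(x)\ge \tfrac12 c\,\phi_1(x)$ for $|x|$ small; thus the relevant product is at most $C'\phi_1 e^{-\tfrac12 c\phi_1}$, bounded as $\phi_1\to+\infty$, and on the compact annulus $\{r_1\le|x|\le r_0\}$ all quantities are continuous in $x$, $\beta$, $\beta'$, so the supremum is finite.

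The delicate condition is \eqref{gradientdirection}. Since $\nabla\phi^\beta(x)\cdot u=\sum_{i=1}^p u_i\phi_i(x)$ is independent of $\beta$, I take $U$ to be a small ball $B(u_0,\rho)$ around $u_0=(1,0,\dots,0)$ with $0<\rho<1$, so that $u_1\ge 1-\rho>0$ for $u\in U$. Near the origin the term $u_1\phi_1$ dominates: by $\phi_i=o(\phi_1)$ there is $r_1\in(0,r_0]$ with
$$\sum_{i=1}^p u_i\phi_i(x)\ \ge\ \tfrac12(1-\rho)\,\phi_1(x)\ \ge\ \tfrac12(1-\rho)\,\chi(|x|),\qquad |x|\le r_1,$$
uniformly in $u\in U$, while for $|x|>r_0$ the bound $|\phi_i|\le C\psi$ gives $\sum_i u_i\phi_i(x)\ge -C''\psi(|x|)$, and on the compact annulus $r_1\le|x|\le r_0$ continuity of the $\phi_i$ yields a finite lower bound. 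These combine into a single function $\tilde\phi$ that is regular and non-integrably divergent at the origin, hence stable (a regular, non-integrably divergent potential is stable, see \cite{Ruelle70}), and satisfies $\nabla\phi^\beta(x)\cdot u\ge\tilde\phi(x)$ for all $u\in U$, $\beta\in\K$. With \eqref{gradientdirection} in hand, all hypotheses of Theorem~\ref{th IR} hold and the MLE of $(z^*,\beta^*)$ is strongly consistent. The main obstacle is making the domination $\phi_i=o(\phi_1)$ near the origin uniform over $\K$ (and over $u\in U$), on which both \eqref{goodexplosion} and the stability of $\tilde\phi$ rest; the positivity $\beta_1>0$ forced by the non-integrable divergence is exactly what enables it.
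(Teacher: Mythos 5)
Your verification of \eqref{gradientregular} is sound (and essentially the paper's own argument), but your treatment of \eqref{gradientdirection} contains a genuine gap, and the same defect appears in the annulus part of \eqref{goodexplosion}. Twice you invoke ``continuity of the $\phi_i$'' in $x$ on the annulus $\{r_1\le|x|\le r_0\}$; no continuity in $x$ is assumed anywhere in Section~\ref{sectionLJ} (only symmetry on $\R^d\setminus\{0\}$ and differentiability in $\beta$). This cannot be repaired for your choice of $U$: the hypotheses control the individual $\phi_i$ only near the origin (via $\phi_i=o(\phi_1)$ and $\phi_1\ge\chi$) and for $|x|>r_0$ (via regularity), while on the intermediate annulus only the \emph{admissible combinations} $\phi^\beta$, $\beta\in\B$, are bounded below by $\chi$. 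Since $B(u_0,\rho)$ with $u_0=(1,0,\dots,0)$ necessarily contains vectors with negative coordinates, $\sum_i u_i\phi_i$ can be unbounded below there. Concretely, take $d=p=2$, $\chi(t)=t^{-2}$, $\B=(1,2)^2$, disjoint annuli $A_k\subset\{r_1<|x|<r_0\}$ of positive measure, and set $\phi_1=k$, $\phi_2=k^2$ on $A_k$ (for $k\ge\chi(r_1)$), $\phi_1=\chi(|x|)^2$, $\phi_2=\chi(|x|)$ near the origin, $\phi_1=\phi_2=\chi(|x|)$ elsewhere in the ball, and $\phi_1=\phi_2=0$ outside. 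All hypotheses of the corollary hold, yet for $u=(1,-\rho/2)\in B(u_0,\rho)$ one gets $u_1\phi_1+u_2\phi_2=k-(\rho/2)k^2\to-\infty$ on the sets $A_k$. No stable $\tilde\phi$ can minorize this: applying the stability inequality to two-point configurations shows that a stable pair potential satisfies $\tilde\phi\ge-2A$ everywhere. So \eqref{gradientdirection} is simply false for your $U$.

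The paper's one-line proof sidesteps this entirely by choosing $U$ to be an open set inside $\B$ itself (up to positive scaling, inside the cone generated by $\B$; the paper's phrase ``included in $\K$'' should really read ``included in $\B$''). Then $\nabla\phi^\beta(x)\cdot u=\sum_i u_i\phi_i(x)=\phi^u(x)$ is itself a member of the family, so uniform regularity and non-integrable divergence give $\phi^u(x)\ge\tilde\phi(x):=\chi(|x|)\1_{|x|\le r_0}-\psi(|x|)\1_{|x|>r_0}$ for \emph{all} $u\in U$ and $x\in\R^d$ with a single $\tilde\phi$, which is regular and non-integrably divergent, hence stable --- this is precisely your near-origin argument, made global by staying inside the parameter set (in the example above, directions in $\B$ have $u_2>0$ and $u_1k+u_2k^2$ stays positive). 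Two further corrections. First, your claim that non-integrable divergence alone forces $\beta_1>0$ fails at $\beta_1=0$ (with $\phi_1=\chi^2$, $\phi_2=\chi$, the potential $\phi^{(0,1)}=\chi$ diverges); what rescues the conclusion is the openness of $\B$: if some $\beta\in\B$ had $\beta_1=0$, then $\B$ would contain a point with $\beta_1<0$, whose potential tends to $-\infty$ at the origin. Second, \eqref{goodexplosion} on the annulus can be proved without continuity and without $\beta_1>0$: since $\phi^{\beta'+v}\ge\chi>0$ for all $|v|\le 2t_0$ with $t_0=\tfrac12\mathrm{dist}(\K,\partial\B)$, choosing $v=-2t_0\nabla\phi^\beta/|\nabla\phi^\beta|$ yields $|\nabla\phi^\beta(x)|\le\phi^{\beta'}(x)/(2t_0)$ and then $\phi^{\beta}(x)\le(1+\mathrm{diam}(\K)/(2t_0))\phi^{\beta'}(x)$ on $|x|\le r_0$, so the quantity in \eqref{goodexplosion} is bounded by $C\sup_{s\ge0}se^{-s}$.
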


\begin{proof}
In view of Theorem~\ref{th IR}, we just need to check  \eqref{gradientregular}-\eqref{gradientdirection}. Let $\beta=(\beta_1,\dots,\beta_p)\in \K$ and $\beta'=(\beta_1,\dots,\beta_{p-1},\beta'_p)\in \K$ with $\beta'_p\neq\beta_p$. Since $\phi^\beta$ is uniformly regular, we have $|\phi^\beta(x) - \phi^{\beta'}(x)|\leq 2\psi(|x|)$, meaning that $|\phi_p(x)| \leq c\psi(|x|)$ for some $c>0$.  We obtain likewise $|\phi_i(x)| \leq c\psi(|x|)$ for any $i=1,\dots,p$.
Consequenlty  $|\nabla\phi^\beta(x)|\leq  \sum_{i=1}^p |\phi_i(x)|\leq pc \Psi(|x|)$ and \eqref{gradientregular} is proved. 

The relation \eqref{goodexplosion} holds  trivially true by choosing $r_0$ sufficiently small. Finally, letting $U$ 
be any open set included in $\K$, we have $\nabla\phi^\beta(x).u =\phi^u(x)$ which is stable by assumption.  
 \end{proof}

\subsection{Examples of infinite-body interactions}\label{infinite body}

Our general result in Theorem~\ref{Th1} is also adapted to non-pairwise potential models. For instance, it is not difficult  to generalize the results of the three previous sections to the case of a finite-body interaction of order greater than two, as  for instance a triplet or quadruplet interaction. For brevity reasons, we do not include this generalization in the present paper. Instead, we give in this section examples arising from stochastic geometry which involve infinite-body interactions. Specifically, we focus on the area-interaction and the Quermass-interaction processes, but other models could have been considered as well.  

The area-interaction process  \cite{A-Baddeley95}, also called Widom-Rowlinson model in the statistical physics  community \cite{Widom70},  is probably the most popular model of infinite-body interaction. 
For $R\ge 0$, $\omega\in\Omega$ and $\L$ a bounded set in $\R^d$, we introduce the notation 
$$ \A^R(\omega_\L)=\lambda^d\left(\bigcup_{x\in\omega_\L} B(x,R)\right).$$
The Hamiltonian $H^{\t}_\L$ of the area-interaction process is defined for any $\om\in\Omega$ and any bounded set $\Lambda$ by
\begin{equation}\label{energyArea} H^\t_\Lambda(\omega)=zN_\L(\omega)+\beta\left(\A^R\left(\omega_{\Lambda\oplus B(0,2R)}\right)-\A^R\left(\omega_{\Lambda\oplus B(0,2R)\backslash \Lambda}\right)\right),
\end{equation}
where  the parameter $\t=(z,R,\beta)$ belongs to $\T=\R_+\times \R_+\times\R$ and the operator $\oplus$ stands for the Minkowski sum acting on the sets in $\R^d$. Note that  the unknown radius $R$ is part of the parameters and is estimated consistently as stated in the following proposition.

\begin{proposition}\label{prop Area}
Let $\HH=(H^\t)_\L$ be the family of area-interaction energies defined in \eqref{energyArea} and $\mathcal K$  a compact subset of $\T$ such that $\theta^*=(z^*,R^*,\beta^*)$ belongs to $\K$. Then the MLE of $\theta^*$ given by \eqref {def MLE} is strongly consistent. 
\end{proposition}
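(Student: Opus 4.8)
The plan is to verify that the area-interaction family satisfies all seven hypotheses of Theorem~\ref{Th1}, since the desired consistency then follows immediately from that theorem applied with $I=\{0\}$ (there is no hardcore parameter here, so $\delta^*=0$) and $\t=(z,R,\beta)$. Since the Hamiltonian in \eqref{energyArea} is linear in $z$ and in $\beta$ but genuinely nonlinear in $R$, the shortcut via Corollary~\ref{corollaireEM} is not directly available, so each assumption must be checked by hand. I would organise the proof as a sequence of checks, devoting most of the effort to {\bf [MeanEnergy]}, {\bf [Regularity]} and {\bf [VarPrin]}, which are the substantive ones for an infinite-body geometric interaction.

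First I would dispose of the easy structural assumptions. For {\bf [Existence]}, the area term $\A^R$ is nonnegative and bounded by $\l^d(\L\oplus B(0,R))$ on bounded sets, so $Z^{\delta,\t}_\L(\om_{\L^c})<\infty$, and the existence of a stationary Gibbs measure for the Widom--Rowlinson/area-interaction model is classical (see \cite{A-Baddeley95,Widom70}); I would cite this. For {\bf [Argmax]}, the map $\t\mapsto H^\t_{\L_n}(\om_{\L_n})$ is in fact continuous: $R\mapsto \A^R(\om_\L)$ is continuous by dominated convergence on the volume of the union of balls, and the $z,\beta$ dependence is linear, so both the lower and upper semicontinuous versions coincide. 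For {\bf [Stability]}, since $\beta$ may be negative, I would use $\A^R(\om_{\L\oplus B(0,2R)})-\A^R(\om_{\L\oplus B(0,2R)\setminus\L})\le |B(0,R)|\,N_\L(\om)$ (each added point contributes at most the volume of one ball), which together with $z\ge 0$ gives $H^\t_\L(\om_\L)\ge -|\beta|\,|B(0,R_{\max})|\,N_\L(\om)=-\kappa N_\L(\om)$ uniformly on a compact $\mathcal K$; this is the key geometric bound I would reuse throughout.

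Next the decompositions. For {\bf [MeanEnergy]} I would define the per-cell energy $H^\t_0$ using the local additivity of volume of unions of balls: the total covered volume $\A^R(\om_{\L_n})$ decomposes as a sum over the unit cells $\tau_k(\Delta_0)$, $k\in\I_n$, of the Lebesgue measure of the covered region inside each cell, which is a stationary, local functional of $\om$ shifted by $\tau_{-k}$; the boundary discrepancy $\partial H^\t_{\L_n}$ comes from cells within distance $2R_{\max}$ of $\partial\L_n$, whose number is $O(n^{d-1})$, giving the required $o(|\L_n|)$ control uniformly on $\mathcal K$, and finiteness of $H^\t(P)=E_P(H^\t_0)$ follows from the $O(N)$ bound above together with the finite intensity of $P\in\G$. {\bf [Boundary]} is handled identically, as the difference $H^\t_{\L_n}(\om_{\L_n})-H^\t_{\L_n}(\om)$ only involves balls crossing $\partial\L_n$. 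For {\bf [Regularity]}, inequality \eqref{ineq regener} follows from Lipschitz-type continuity of $H^\t_0$ in $\t$: the $z,\beta$ directions are linear and the $R$ direction is Lipschitz because $\partial_R \A^R(\om_\L)$ is controlled by the $(d{-}1)$-dimensional surface measure of the union of spheres, bounded in expectation under $P$ by the finite intensity; I would set $g(r)=Cr$. The second part \eqref{regenerstab} is the analogue of the Strauss argument: for a direction increasing the energy I would perturb $\beta$ slightly (taking $\beta_1>\beta_0$ so the area penalty grows), mirroring the strategy of \cite{mase02} and Section~\ref{FRsanshardcore}; the finite net $\NNN(\t_0)$ and radius $r(\t_0)<\eta$ are chosen as there.

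Finally {\bf [VarPrin]}: the pressure \eqref{defpressure} exists and is finite by the general theory for stable interactions, and the variational principle inequality \eqref{ineq} with its equality/identifiability case holds for the area-interaction model by the results on the variational principle for such geometric Gibbs models in \cite{DerStu} (finite-range, since for fixed $R$ the interaction has range $2R$); right-continuity of $\delta\mapsto p(\delta,\t)$ is trivial here since $I=\{0\}$. The identifiability clause requires that distinct $(z,R,\beta)\ne(z^*,R^*,\beta^*)$ give $\G^{\delta,\t}\cap\G^{\delta^*,\t^*}=\emptyset$, which I would reduce to distinctness of the conditional densities: different parameters yield $\pi_\L$-a.s.\ different values of $z N_\L+\beta\A^R$ on a positive-measure event, using that the three statistics $N_\L(\om)$, $\A^{R}(\om)$ and $\A^{R^*}(\om)$ are affinely independent as functions of $\om$ when $R\ne R^*$. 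The main obstacle I expect is precisely this $R$-identifiability together with establishing the strict equality case in \eqref{ineq} for the radius parameter, since $R$ enters the geometry nonlinearly and two different radii can produce coincidentally equal covered volumes on special low-dimensional configurations; the argument must show these coincidences form a $\pi$-null or non-generic set so that the variational inequality is strict off the true parameter. Once all seven assumptions are verified, Theorem~\ref{Th1} delivers the strong consistency, completing the proof.
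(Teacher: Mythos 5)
Your proposal is correct and follows essentially the same route as the paper's proof in Section~\ref{proofThArea}: reduce to Theorem~\ref{Th1}, verify \textbf{[Stability]} via the per-point bound $\beta$ times one ball volume (the paper phrases this as uniform boundedness of the local energy $h^\t(x|\om)$, same estimate), use the per-cell decomposition of the covered volume for \textbf{[MeanEnergy]} (identical to \eqref{meanenergyArea}), exploit that the boundary term is deterministic and $O(n^{d-1})$ for \textbf{[MeanEnergy]} and \textbf{[Boundary]}, and invoke \cite{DerStu} for the finite-range variational principle. Two points of divergence deserve comment. First, for the second part of \textbf{[Regularity]}, i.e.\ \eqref{regenerstab}, you propose to mimic the one-sided domination trick of Section~\ref{FRsanshardcore} by perturbing $\beta$ upward; this is an unnecessary detour, and as stated it is incomplete: ``$\beta_1>\beta_0$ so the area penalty grows'' only makes sense when $\beta_0\ge 0$, and to dominate $\beta'\A^{R'}$ for nearby $R'$ you still must control the $R$-variation of the area, which is exactly the Lipschitz bound $|\A^{R}(\om_\L)-\A^{R'}(\om_\L)|\le C|R-R'|\,N_\L(\om_\L)$. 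The paper's observation is that this Lipschitz continuity, which you already established for \eqref{ineq regener}, holds pointwise in $\om_\L$ after normalizing by $N_\L$, so \eqref{regenerstab} follows at once with $\NNN(\t_0)$ a singleton and $g(r)=Cr$; the domination trick is only needed for models (like Strauss) whose energy is genuinely discontinuous in the parameter, which the area interaction is not. Second, you explicitly raise the identifiability clause of \textbf{[VarPrin]} (distinct $(z,R,\beta)$ must not share a Gibbs measure, so that equality in \eqref{ineq} forces $\t=\t^*$); the paper's proof is silent on this point and implicitly takes it for granted, so your insistence on distinguishing the statistics $N_\L$, $\A^{R}$, $\A^{R^*}$ through suitable configurations is a genuine (and welcome) addition rather than a deviation, though it would need to be carried out in detail to be complete.
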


\medskip

The Quermass-interaction model is a generalization of the area-interaction process, where not only the volume of the union of balls is involved in the Hamiltonian but also the other Minkowski functionals. 
We denote by $M_k$,  $k=1\ldots d+1$, the $d+1$ Minkowski functionals in $\R^d$ and for short 
$$ \M_k^R(\omega_\L)=M_k\left(\bigcup_{x\in\omega_\L} B(x,R)\right),$$
where $R\geq 0$, $\om\in\Omega$ and $\L$ is a bounded subset of $\R^d$. Recall that for $d=2$, $M_1$ corresponds to  the Euler-Poincar\'e characteristic, $M_2$ is the perimeter and $M_3$ is the area. We refer to \cite{chiu2013} for more details about Minkowski functionals.

The Hamiltonian $H^{\t}_\L$ of the Quermass-interaction process is defined for any $\om\in\Omega$ and any bounded set $\Lambda$ by   
\begin{equation}\label{energyQuermass} H^\t_\Lambda(\omega)=zN_\L(\omega)+\sum_{k=1}^{d+1} \beta_k \left(\M_k^R\left(\omega_{\Lambda\oplus B(0,2R)}\right)-\M_k^R\left(\omega_{\Lambda\oplus B(0,2R)\backslash \Lambda}\right)\right)
\end{equation}
where $R\geq 0$, $z>0$ and $\beta=(\beta_1,\dots,\beta_{d+1})\in\R^{d+1}$.

This model has been introduced in  \cite{A-Kendall99}. Its existence on $\R^d$ has been solved  so far only when $d\leq 2$  in \cite{A-Der09}. Therefore we restrict the following study to the case $d\leq 2$.  
Moreover, we assume that $R$ is known and we only consider the MLE estimation of  $\t:=(z,\beta)$ in \eqref{energyQuermass}.  The reason is that we did not succeed to prove that \eqref{regenerstab}  in   {\bf [Regularity]} holds in presence of the Euler-Poincar\'e characteristic $M_1$ when $R$ is part of the unknown parameter.  
With the assumption that $R$ is known, the Quermass-interaction process  becomes an infinite-body interactions exponential model  and  Corollary~\ref{corollaireEM} applies. In this framework the set of parameters is $\Theta=\R_+ \times \R^{3}$.

\begin{proposition}\label{prop quermass}
Let $\HH=(H^\t)_\L$ be the family of Quermass-interaction energies defined in \eqref{energyQuermass} for $d=2$ and let $\mathcal K$ be a compact subset of  $\T$ such that $\theta^*=(z^*,\beta^*)$ belongs to $\K$. Then the MLE of $\theta^*$ given by \eqref {def MLE} is strongly consistent. 
\end{proposition}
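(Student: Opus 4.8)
The plan is to verify that the Quermass-interaction model with $d=2$ and known $R$ fits into the exponential-model framework of Corollary~\ref{corollaireEM}, so that the consistency follows by checking only \textbf{[Existence]}, \textbf{[Stability]}, \textbf{[MeanEnergy']}, \textbf{[Boundary]} and \textbf{[VarPrin]}. First I would observe that, with $R$ fixed, the Hamiltonian \eqref{energyQuermass} is linear in $\theta=(z,\beta_1,\beta_2,\beta_3)$: writing $H^k_\L$ for the intensity term ($k$ corresponding to $z$) and for each increment $\M_k^R(\omega_{\Lambda\oplus B(0,2R)})-\M_k^R(\omega_{\Lambda\oplus B(0,2R)\setminus\Lambda})$, we get exactly the form \eqref{energyexponentielle} with $p=4$. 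Each $(H^k_\L)$ satisfies the compatibility relation \eqref{compatible} because adding or removing points outside a set $\Lambda'\supset\Lambda$ only modifies the Minkowski-functional increments through the exterior configuration, the difference depending solely on $\omega_{\Lambda^c}$. Since $\Theta=\R_+\times\R^3$ is open and there is no hardcore ($I=\{0\}$), the structural hypotheses of Corollary~\ref{corollaireEM} are in place.

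Next I would dispatch the standard assumptions. For \textbf{[Existence]}, I would cite \cite{A-Der09}, where existence of stationary Quermass-interaction Gibbs measures on $\R^2$ was established, which simultaneously gives $Z^{\delta,\theta}_\L(\omega_{\L^c})<\infty$ and $\G^{\delta,\theta}\neq\emptyset$ for all parameters in $\Theta$. For \textbf{[Stability]}, the key point is that each Minkowski functional of a finite union of balls of fixed radius $R$ in $\R^2$ is bounded in absolute value by a constant (depending on $R$) times the number of balls; combined with the geometric covering argument used in \cite{A-Der09}, this yields $H^\theta_\L(\omega_\L)\ge -\kappa N_\L(\omega)$ uniformly over any compact $\mathcal K\subset\Theta$. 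For \textbf{[MeanEnergy']} I would use the additive, translation-covariant structure of the functional increments to produce the stationary decomposition \eqref{dec} for each $H^k_{\L_n}$, defining $H^k_0$ as the local contribution attached to the unit cell $\Delta_0$ and collecting the remaining cells straddling the boundary into $\partial H^k_{\L_n}$; because $R$ is fixed, the boundary layer has width $O(R)$ and thickness $o(|\L_n|)$, so $|\L_n|^{-1}\sup_{\theta\in\mathcal K}|\partial H^\theta_{\L_n}|\to 0$ $P$-almost surely, and finiteness of $H^\theta(P)=E_P(H^\theta_0)$ follows from the same per-ball bound together with finite intensity of $P$. The assumption \textbf{[Boundary]} is handled identically: the difference $H^\theta_{\L_n}(\omega_{\L_n})-H^\theta_{\L_n}(\omega)$ only sees points within distance $2R$ of $\partial\L_n$, a region of negligible relative volume.

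The substantive step is \textbf{[VarPrin]}. Existence and finiteness of the pressure $p(\delta,\theta)$, and the variational inequality \eqref{ineq} together with its equality/identifiability characterization, are precisely the content of the variational principle for these geometric interactions; I would invoke the version for Quermass-type interactions proved in \cite{A-Der09} (or the general stochastic-geometry variational principle it relies on). Since $I=\{0\}$ here, the right-continuity of $\delta\mapsto p(\delta,\theta)$ holds trivially, as noted after the statement of \textbf{[VarPrin]}. The identifiability clause — equality in \eqref{ineq} forces $(\delta,\theta)=(\delta^*,\theta^*)$ — follows because distinct parameter vectors $\beta\neq\beta'$ give mean-energy functionals that differ on a set of positive measure, the four increments $(N_\L, \M_1^R,\M_2^R,\M_3^R)$ being affinely independent as functions of the configuration for $R$ fixed (this uses that the Euler characteristic, perimeter and area of unions of balls are genuinely distinct geometric quantities, so no nontrivial linear combination vanishes identically). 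With all five hypotheses verified, Corollary~\ref{corollaireEM} delivers the strong consistency of the MLE of $\theta^*$.

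I expect the main obstacle to be the careful verification of \textbf{[MeanEnergy']} and \textbf{[Stability]} in a form uniform over the compact $\mathcal K$: one must control the Minkowski-functional increments not pointwise but through their dependence on the local point count, and ensure the boundary remainder genuinely vanishes after normalization. The reason the authors restrict to known $R$ and $d=2$ — namely the difficulty of controlling \eqref{regenerstab} in the presence of the Euler-Poincaré characteristic $M_1$ when $R$ varies, and the limited existence results beyond $d\le 2$ — is exactly what makes the exponential-model reduction available and keeps \textbf{[Regularity]} out of the required list, so the proof reduces to these geometric estimates plus citation of the variational principle.
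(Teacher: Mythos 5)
Your proposal takes essentially the same route as the paper: reduction to Corollary~\ref{corollaireEM} via the exponential form with $R$ fixed, existence from \cite{A-Der09}, \textbf{[MeanEnergy']} and \textbf{[Boundary]} from the additivity of the Minkowski functionals exactly as in the area-interaction proof, and the variational principle for a stable, finite-range (range $2R$) interaction. The only discrepancies are citations rather than substance: the paper gets \textbf{[Stability]} from \cite{A-Kendall99} and \textbf{[VarPrin]} from \cite{DerStu} (the general variational principle for stable finite-range interactions), whereas you attribute both to \cite{A-Der09}, which is the existence paper.
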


\section{Proofs} \label{proofs}

 \subsection{Proof of Theorem~\ref{Th1}}

The proof is organised as follows. In a first step (Lemma \ref{consistency delta}), we show that the hardcore parameter $\hat\delta_n$ converges to $\delta^*$. So, according to Lemma~\ref{MLEexistence}, it remains to prove that the minimizer of $\theta \to K_n^{\hat\delta_n,\t}(\om^*_{\L_n})$ converges to $\theta^*$. This result is guaranteed via a general lemma on the convergence of  minimizers of possibly non regular contrast functions (Lemma \ref{propC}). The final step of the proof therefore consists in checking the assumptions of this lemma, namely
\begin{itemize}
\item the almost sure convergence of the contrast functions to a lower semicontinuous function admitting $\theta^*$ as a minimizer (assumptions $i)$, $ii)$, $iv)$ of Lemma \ref{propC}),
\item the control, when $n$ goes to infinity, of the infimum of  contrast functions evaluated on small balls with respect to the minimum of contrast functions evaluated only on a finite number of points (assumption $v)$). 
\end{itemize}

Note that  Lemma \ref{propC} and the verification of its assumptions in the setting of Theorem \ref{Th1} and for all models considered in Section~\ref{pair 	interaction} are the main contributions of the present paper.

Let us now enter into the details of the proof. Let $P$ be a probability measure in $\G^{\delta^*,\t^*}$ and let $\om^*$ be a realisation of $P$.   If $P$ is not ergodic, it can be represented as the mixture of ergodic stationary Gibbs measures, see \cite{B-Pre76}. Therefore the proof of the consistency of the MLE reduces to the case  when $P$ is ergodic, which is assumed henceforth.

Let us start by proving the convergence of the hardcore parameter.

\begin{lemme}\label{consistency delta}
The MLE  $\hat\delta_n$ converges  $P$-almost surely to $\delta^*$ when $n$ goes to infinity.
\end{lemme}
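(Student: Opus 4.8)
The goal is to show $\hat\delta_n\to\delta^*$ almost surely, where by Lemma~\ref{MLE hardcore} we have $\hat\delta_n=\tilde\delta_n=\min_{\{x,y\}\in\om^*_{\L_n}}|x-y|$ whenever $\tilde\delta_n\in I$. Since the true configuration $\om^*$ is drawn from $P\in\G^{\delta^*,\t^*}$, it lies $P$-almost surely in $\Oi^{\delta^*}$, so every pair of points of $\om^*$ is at distance at least $\delta^*$. Consequently $\tilde\delta_n\ge\delta^*$ for all $n$, and since $\tilde\delta_n$ is nonincreasing in $n$ (minimizing over a growing set $\om^*_{\L_n}$), the sequence converges to some limit $\ell\ge\delta^*$. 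The whole proof thus reduces to proving the reverse bound, $\ell\le\delta^*$, i.e.\ that for every $\eps>0$ the configuration $\om^*$ almost surely contains, in some large enough window, a pair of points at distance at most $\delta^*+\eps$.

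First I would reduce to the ergodic case, which is already granted in the text: $P$ is assumed ergodic. The plan is then to exploit the DLR equations together with stationarity and ergodicity to show that the event ``there exists a pair of points at mutual distance in $[\delta^*,\delta^*+\eps)$'' occurs with positive $P$-probability in a fixed bounded window, and then upgrade this to an almost-sure statement on the increasing windows $\L_n$ via the ergodic theorem. Concretely, I would fix a small box $\L$ and estimate from below the $P$-probability of the event
\[
A_{\eps}=\Big\{\om:\ \exists\,\{x,y\}\subset\om_\L,\ \delta^*\le|x-y|<\delta^*+\eps\Big\}.
\]
Using the DLR equation \eqref{DLR} and the explicit conditional density \eqref{parametric localdensity}, the conditional probability of $A_\eps$ given $\om_{\L^c}$ is
\[
\frac{1}{Z^{\delta^*,\t^*}_\L(\om_{\L^c})}\int \1_{A_\eps}(\om'_\L\cup\om_{\L^c})\,e^{-H^{\t^*}_\L(\om'_\L\cup\om_{\L^c})}\1_{\Oi^{\delta^*}}\,\pi_\L(d\om'_\L),
\]
and the key point is that this integral is strictly positive: the Poisson reference measure $\pi_\L$ assigns positive mass to configurations placing exactly two points in $\L$ at a separation lying in $[\delta^*,\delta^*+\eps)$, the hardcore indicator $\1_{\Oi^{\delta^*}}$ does not vanish there since such pairs respect the hardcore distance $\delta^*$, and by \textbf{[Stability]} the energy $H^{\t^*}_\L$ is bounded below on $\O_T$, so the exponential weight is bounded away from $0$. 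Hence $P(A_\eps)>0$ by integrating against $P$.

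Having established $P(A_\eps)>0$ for the fixed box $\L$, I would conclude by ergodicity: the translated events $\tau_k^{-1}A_\eps$, $k\in\Zd$, form a stationary family, so by the ergodic theorem the spatial frequency with which $A_\eps$ is realized in the boxes $\tau_k(\L)\subset\L_n$ converges $P$-almost surely to $P(A_\eps)>0$. In particular, for $P$-almost every $\om^*$ and all $n$ large enough there is a pair of points of $\om^*_{\L_n}$ at distance strictly less than $\delta^*+\eps$, giving $\tilde\delta_n<\delta^*+\eps$ and therefore $\ell\le\delta^*+\eps$. Letting $\eps\to0$ through a countable sequence yields $\ell=\delta^*$, hence $\tilde\delta_n\to\delta^*$; the boundary adjustment $\hat\delta_n=\delta_{\max}$ when $\tilde\delta_n\notin I$ is irrelevant asymptotically since $\delta^*\in I$ and $\tilde\delta_n\to\delta^*$ forces $\tilde\delta_n\in I$ eventually. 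The main obstacle I anticipate is the strict positivity of $P(A_\eps)$: one must handle the boundary condition $\om_{\L^c}$ carefully so that the lower bound on the conditional probability is uniform enough (or at least positive $P$-a.s.) to survive integration against $P$. This is where \textbf{[Stability]} and the finiteness of $Z^{\delta^*,\t^*}_\L(\om_{\L^c})$ guaranteed by \textbf{[Existence]} are essential, and where some care is needed to ensure $Z^{\delta^*,\t^*}_\L(\om_{\L^c})<\infty$ holds on a set of full $P$-measure.
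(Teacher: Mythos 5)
Your proposal follows essentially the same route as the paper's own proof: monotonicity of $\tilde\delta_n$ together with $\tilde\delta_n\ge\delta^*$, then a DLR-based positivity argument showing that a pair at distance slightly above $\delta^*$ occurs with positive probability in a fixed bounded window, upgraded to an almost sure statement by the ergodic theorem; the paper's proof is exactly this, only terser.

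However, one justification in your write-up is logically backwards. You claim that by \textbf{[Stability]} the energy is bounded below, ``so the exponential weight is bounded away from $0$.'' The bound $H_\L^\t(\om_\L)\ge-\kappa N_\L(\om)$ yields an \emph{upper} bound $e^{-H}\le e^{\kappa N}$ on the weight, not a lower one, so \textbf{[Stability]} cannot deliver what you ask of it. What the positivity of the numerator actually requires is that $e^{-H^{\t^*}_\L(\om'_\L\cup\om_{\L^c})}>0$ on the good set of two-point insertions, i.e. that the energy is finite there; this is precisely the standing assumption of the parametric setting in Section~\ref{param Gibbs} (namely $H^\t_\L(\om)<\infty$ for all $\om\in\O_T$, the hardcore being carried entirely by the indicator $\1_{\Oi^\delta}$). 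Pointwise strict positivity of the integrand on a set of positive $\pi_\L$-measure already makes the integral strictly positive; no uniform lower bound is needed, so the argument survives once the citation is corrected. A second, smaller gap: the indicator $\1_{\Oi^{\delta^*}}(\om'_\L\cup\om_{\L^c})$ constrains not only the inserted pair but also the distances between the inserted points and the boundary condition $\om_{\L^c}$; your phrase ``such pairs respect the hardcore distance $\delta^*$'' addresses only the former. The fix is to take $\L$ large enough and place the two points in a sub-box at distance at least $\delta^*$ from $\L^c$, so the event keeps positive $\pi_\L$-measure whatever $\om_{\L^c}$ is, and the conditional probability is positive $P$-almost surely, which, as you correctly note, is all that is needed to integrate against $P$.
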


\begin{proof}
From Lemma~\ref{MLE hardcore}, $\hat \delta_n =\min( \tilde\delta_n,\delta_{\max})$ where $\tilde\delta_n$ is given by \eqref{delta tilde}. Clearly $\tilde\delta_n$ is a decreasing sequence and $\tilde\delta_n\geq\delta^*$. So it remains to prove that, for any $\delta'>\delta^*$, $\tilde \delta_n$ is smaller than $\delta'$ for $n$ large enough. By the DLR equations (\ref{DLR}) and the definition of local densities (\ref{parametric localdensity}), there exists a bounded set $\L$ such that
$$ P(\text{there exist } \; x,y\in\omega_\Lambda \text{ such that } \delta^*<|x-y|<\delta')>0.$$

Thanks to the ergodic theorem, this implies that
$$  P(\text{there exist } n\ge 1 \text{ and }  x,y\in\omega_{\Lambda_n} \text{ such that } \delta^*<|x-y|<\delta')=1$$
which proves the expected result. 
\end{proof}

\medskip

According to Lemmas~\ref{MLEexistence} and \ref{consistency delta}, we have now to prove  the consistency of $\hat\t_n$ where 
$$ \hat\t_n =\text{argmin}_{\t\in\mathcal K} K_n^{\hat\delta_n,\t}(\om^*_{\L_n}).$$
The strong consistency relies on a minimum contrast function result stated in the next lemma, where neither the contrast function nor  its limit  need to be continuous with respect to $\t$.
 
  \begin{lemme}\label{propC}
For any $\t\in\T$,  let $(h^\t_n)_{n\ge 1}$ be a family of parametric measurable functions from $\O_T$ to $\R$. Let $\mathcal K$ be a compact subset of   $\T$ and  let $P$ be a probability measure on $\O_T$. We assume that 
  
  {\it i)} For any $\t\in\T$, $h^\t_n$ converges $P$-almost surely to a finite real number (denoted by $h^\t$) when $n$ goes to infinity.
  
{\it ii)} The function $\t\mapsto h^\t$ admits a unique minimum over $\T$: 
$$ \t^*=\text{argmin}_{\t\in\T} h^\t,$$  
and $\theta^*\in \K$.

{\it iii)} For $P$-almost every $\om\in\O_T$ and  for $n$ sufficiently large, $\t\mapsto h_n^\t$ admits an infimum over $\mathcal K$ attained by at least one element $\hat\t_n(\om)$.

{\it iv)} The function $\t\mapsto h^\t$ is lower semicontinuous on $\T$ (i.e. for any $\t\in\T$, $\liminf_{\t'\mapsto \t} h^{\t'} \ge h^\t$).

{\it v)} There exists a function $g_0$ from $\R_+$ to $\R_+$ satisfying $$ \lim_{x\mapsto 0} g_0(x)=0$$
and such that for any $\eps>0$ and for any $\t\in \mathcal \K$,  there exists a finite subset $\NNN(\t)$ included in $B(\t,g_0(\eps))\cap\T$ and $r(\t)>0$ such that

\begin{equation}\label{controlinf} P\left(\limsup_{n\to\infty} \left( \min_{\t'\in\NNN(\t)} h^{\t'}_n-\inf_{\t'\in B(\t,r(\t))\cap \K} h^{\t'}_n \right)   \ge \eps\right)=0.\end{equation}

Then the sequence $\hat\t_n=\text{argmin}_{\t\in \mathcal K} h_n(\theta)$ converges $P$-almost surely to $\t^*$ when $n$ goes to infinity.
\end{lemme}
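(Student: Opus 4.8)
The plan is to run a covering/compactness argument that reduces the almost sure convergence $\hat\theta_n\to\theta^*$ to showing that, for every $\eta>0$, the estimator eventually avoids the compact set $F_\eta=\{\theta\in\mathcal K:\ |\theta-\theta^*|\ge\eta\}$ (the case $F_\eta=\emptyset$ being trivial). The driving elementary inequality is that, since $\hat\theta_n$ attains the infimum of $h_n^{\theta}$ over $\mathcal K$ by iii) and $\theta^*\in\mathcal K$ by ii), one has $\inf_{\theta\in\mathcal K}h_n^\theta=h_n^{\hat\theta_n}\le h_n^{\theta^*}$, which by i) converges $P$-almost surely to $h^{\theta^*}$; hence $\limsup_n\inf_{\theta\in\mathcal K}h_n^\theta\le h^{\theta^*}$. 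So if I can prove that $\liminf_n\inf_{\theta\in F_\eta}h_n^\theta> h^{\theta^*}$ almost surely, then for $n$ large the infimum over $\mathcal K$ is strictly below the infimum over $F_\eta$, forcing $\hat\theta_n\notin F_\eta$, that is $|\hat\theta_n-\theta^*|<\eta$.

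The core is a local lower bound at each point of $F_\eta$, produced by combining iv) and v) with the pointwise convergence i). Fix $\theta\in F_\eta$; by the identifiability in ii) we have $h^\theta>h^{\theta^*}$, so I set $\delta_\theta:=(h^\theta-h^{\theta^*})/3>0$. Lower semicontinuity iv) at $\theta$ yields $\rho(\theta)>0$ with $h^{\theta'}> h^\theta-\delta_\theta=h^{\theta^*}+2\delta_\theta$ for all $\theta'\in B(\theta,\rho(\theta))$. I then choose $\varepsilon(\theta)\le\delta_\theta$ small enough that $g_0(\varepsilon(\theta))\le\rho(\theta)$, which is possible since $g_0(x)\to0$ as $x\to0$, and apply v) with this $\varepsilon(\theta)$ to obtain a finite set $\mathcal N(\theta)\subset B(\theta,g_0(\varepsilon(\theta)))\cap\Theta\subset B(\theta,\rho(\theta))$ and a radius $r(\theta)>0$. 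On the full-measure complement of the event in \eqref{controlinf}, for $n$ large $\inf_{\theta'\in B(\theta,r(\theta))\cap\mathcal K}h_n^{\theta'}> \min_{\theta'\in\mathcal N(\theta)}h_n^{\theta'}-\varepsilon(\theta)$; since $\mathcal N(\theta)$ is finite, i) gives $\min_{\theta'\in\mathcal N(\theta)}h_n^{\theta'}\to\min_{\theta'\in\mathcal N(\theta)}h^{\theta'}$, and each such $h^{\theta'}$ exceeds $h^{\theta^*}+2\delta_\theta$. Passing to the liminf, I obtain the almost sure bound
\begin{equation*}
\liminf_{n\to\infty}\ \inf_{\theta'\in B(\theta,r(\theta))\cap\mathcal K}h_n^{\theta'}\ \ge\ \min_{\theta'\in\mathcal N(\theta)}h^{\theta'}-\varepsilon(\theta)\ \ge\ h^{\theta^*}+\delta_\theta .
\end{equation*}

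To finish I would cover the compact set $F_\eta$ by finitely many balls $B(\theta_j,r(\theta_j))$, $j=1,\dots,J$, with $\theta_j\in F_\eta$, and put $\delta_0=\min_j\delta_{\theta_j}>0$. On the finite, hence full-measure, intersection of the corresponding events,
\[
\liminf_{n\to\infty}\ \inf_{\theta\in F_\eta}h_n^{\theta}\ \ge\ \min_{1\le j\le J}\ \liminf_{n\to\infty}\ \inf_{\theta'\in B(\theta_j,r(\theta_j))\cap\mathcal K}h_n^{\theta'}\ \ge\ h^{\theta^*}+\delta_0\ >\ h^{\theta^*},
\]
which together with $\limsup_n\inf_{\mathcal K}h_n^\theta\le h^{\theta^*}$ gives $|\hat\theta_n-\theta^*|<\eta$ for $n$ large. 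Applying this with $\eta=1/m$, $m\ge1$, and intersecting the associated full-measure events yields $\hat\theta_n\to\theta^*$ almost surely; the bookkeeping of null sets is harmless because only countably many parameters (the $\theta_j$ and the points of the finite sets $\mathcal N(\theta_j)$, over all $m$) and countably many instances of v) are used. The main obstacle, and exactly where the non-standard hypothesis v) is indispensable, is the passage from the infimum of $h_n^\theta$ over a whole ball — where neither uniform convergence nor continuity is at hand — to a minimum over the finite set $\mathcal N(\theta)$, on which the mere pointwise convergence i) suffices; lower semicontinuity iv) is what keeps this finite-set value strictly above $h^{\theta^*}$.
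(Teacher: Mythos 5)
Your proof is correct, and it is organized differently from the paper's, although the decisive mechanism is the same in both: hypothesis \emph{v)} is what lets one replace the infimum of $h_n^\t$ over a small ball (where neither continuity nor uniform convergence is available) by a minimum over the finite net $\NNN(\t)$, where the pointwise convergence \emph{i)} applies, with compactness supplying a finite cover. What you do differently: you run a classical Wald-type separation argument --- fix $\eta>0$ and show $\liminf_n \inf_{F_\eta} h_n^\t > h^{\t^*} \ge \limsup_n \inf_{\K} h_n^\t$ with $F_\eta=\{\t\in\K:\ |\t-\t^*|\ge\eta\}$ --- covering only $F_\eta$, and you convert identifiability \emph{ii)} plus lower semicontinuity \emph{iv)} into a \emph{local} level gap $\delta_\t$ at each covering point, which allows you to invoke \emph{v)} with a point-dependent $\eps(\t)$ chosen so that $g_0(\eps(\t))\le \rho(\t)$. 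The paper instead fixes a single $\eps$, covers all of $\K$, and obtains the gap on the nets geometrically rather than from local lsc: it introduces the sublevel set $A_\eps=\{\t\in\K\oplus \bar B(0,g_0(\eps)):\ h^\t\le h^{\t^*}+2\eps\}$ and its enlargement $B_\eps=A_\eps\oplus B(0,g_0(\eps)+\eps)$, proves via \emph{ii)} and \emph{iv)} that the diameter of $B_\eps$ shrinks to $0$, notes that every net point attached to a ball meeting $B_\eps^c$ must lie outside $A_\eps$ (hence satisfies $h^\t>h^{\t^*}+2\eps$), and concludes through a chain of event inclusions that $P\left(\limsup_n\{\hat\t_n\in B_\eps^c\}\right)=0$. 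Your route is more elementary --- no Minkowski-sum constructions and no shrinking-diameter argument, with lsc used only pointwise --- and it exploits the full quantifier structure of \emph{v)} (``for any $\eps$ and any $\t$''). The paper's route buys a slightly more quantitative picture (the estimator is eventually trapped in the explicit neighborhood $B_\eps$ of $\t^*$, whose size is governed by the sublevel sets of the limiting contrast) and uses \emph{v)} at one uniform $\eps$ across the cover, which mirrors how $g_0$ is constructed when \emph{v)} is verified for the Gibbs models later in the paper. Both arguments accommodate non-unique minimizers, and the null-set bookkeeping (countably many applications of \emph{i)} and \emph{v)}) is handled the same way.
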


\begin{proof}
Let $\eps>0$ and $A_\eps$ be the following  set%subset of $\K$

$$ A_\eps=\left\{ \t\in\K\oplus  \bar B(0,g_0(\eps)) \text{ such that } h^\t\le  h^{\t^*}+2\eps\right\},$$
 where $g_0$ comes from {\it v)}    and $\bar B(0,g_0(\eps))$ denotes the closure of the  ball $B(0,g_0(\eps))$.
 %The adjuction of the latter to the set $\K$ for technical reasons which appear below. 
 By {\it iv)} the function $\t\mapsto h^\t$ is lower semicontinuous and so the set $A_\eps$ is a compact set. Moreover by {\it ii)},

\begin{equation}\label{inter}
\{\t^*\}= \bigcap_{\eps>0} A_{\eps}.
\end{equation}

We deduce that the diameter of $A_\eps$ goes to zero when $\eps$ goes to zero. 
  Let us consider the set $B_\eps:=A_\eps\oplus B(0,g_0(\eps)+\eps)$. Clearly its diameter tends to zero as well.
%Let us note that the diameter of $B_\eps:=A_\eps\oplus B(0,g_0(\eps)+\eps)$ goes to zero as well. 
So to prove that  $(\hat\t_n)$ converges $P$-almost surely to $\t^*$, it is sufficient to show that for any $\eps>0$ 

\begin{equation}\label{limsup}
 P\left(\limsup_{n\to \infty}  \left\{ \hat\t_n \in B_\eps^c \right\} \right)=0,
 \end{equation}
where $B_\eps^c = \K \setminus (B_\eps\cap \K)$.

Since $\K$ is compact, there exists a finite sequence $(\t_i)_{1\le i \le N}$ in $\K$ and $\{r_i\}_{1\le i \le N}$ in $\R_+$, where $r_i=r(\t_i)$ comes from {\it v)}, such that the union of balls $B(\t_i,r_i)$ covers $\K$. Note that if \eqref{controlinf} holds for some $r$, then it holds for any $r'\leq r$. So  without loss of generality we can assume  $r_i\leq \eps$ for all $i=1,\dots,N$.  Let $\I_\eps$ be the subset of indexes $i\in\{1,\ldots,N\}$ such that the ball $B(\t_i,r_i)$ intersects the set $B^c_\eps$. We denote by $\NNN_i$ the subset $\NNN(\t_i)$ in assumption {\it v)} and by  $\NNN_\eps$ the union of $\NNN_i$ for $i\in I_\eps$. By definition and since for all $i=1,\dots,N$, $r_i\leq \eps$  and $\NNN_i\subset B(\t_i,g_0(\eps))$, we have

\begin{equation}\label{cover}
B^c_\eps\subset \bigcup_{i\in\I_\eps} B(\t_i,r_i),\quad  \NNN_\eps \subset \K\oplus  \bar B(0,g_0(\eps)) \quad \text{ and } \quad \NNN_\eps\cap A_\eps=\emptyset.
\end{equation}

The two latest properties imply that 
\begin{equation}\label{net}
\forall \t\in\NNN_{\eps}, \quad  h^\t>  h^{\t^*}+2\eps.
\end{equation}

Let us finally consider the following sequence of random variables:
 $$ \delta^\eps_n= \max_{i\in\I_\eps} \left( \min_{\t'\in\NNN_i} h^{\t'}_n-\inf_{\t'\in B(\t_i,r_i)\cap\K} h^{\t'}_n \right).$$
 Note that from \eqref{controlinf} and since $\I_\eps$ is finite
 \begin{equation}\label{delta}
 P \left(\limsup_{n\to \infty} \Big\{\delta_n^\eps\ge \eps \Big\} \right) =0.
\end{equation}
 
To prove (\ref{limsup}), we consider the following inclusions.
 \begin{align*}
\limsup_{n\to \infty} \left\{\hat \t_n \in B^c_\eps \right\} & \subset  \limsup_{n\to\infty} \left\{ \inf_{\t\in B^c_\eps} h_n^\t \le h_n^{\t^*} \right\}\\
 & \subset  \limsup_{n\to\infty} \left\{ \min_{\t\in\NNN_{\eps}} h_n^\t - h_n^{\t^*}\le \delta^\eps_n\right\}\\
&\subset   \limsup_{n\to\infty} \Big\{\delta_n^\eps \ge \eps\Big\}\bigcup \left\{\min_{\t\in\NNN_{\eps}} h_n^\t- h_n^{\t^*}<\eps\right\}\\
& =    \limsup_{n\to\infty} \Big\{ \delta_n^\eps \ge \eps\Big\}\bigcup \limsup_{n\to\infty} \left\{ \min_{\t\in\NNN_{\eps}} h^\t_n - h_n^{\t^*} < \eps \right\}  \\
& =  \limsup_{n\to\infty} \Big\{ \delta_n^\eps \ge \eps\Big\}\bigcup \left\{ \liminf_{n\to \infty} \left(\min_{\t\in\NNN_{\eps}} h^\t_n - h_n^{\t^*}\right) \le  \eps \right\}  \\
& \subset  \limsup_{n\to\infty} \Big\{ \delta_n^\eps \ge \eps\Big\}\bigcup \left\{ \min_{\t\in\NNN_{\eps}} h^\t - h^{\t^*} \le \eps \right\}\bigcup \O_{\eps},  
\end{align*}
where $\O_{\eps}$ is the event where $h_n^\t$  does not converge to $h^\t$, when $n$ goes to infinity, for some $\t$ in $\NNN_{\eps}\cup\{\t^*\}$. Since $\NNN_{\eps}\cup\{\t^*\}$ is countable and by assumption {\it i)}, $P(\O_{\eps})=0$.
 Moreover, from \eqref{net}, the set $\left\{ \min_{\t\in\NNN_{\eps}} h^\t - h^{\t^*} \le \eps \right\}$ is empty.
%Moreover, from the definition of $A_{\eps}$ and (\ref{cover}), the event $\left\{ \min_{\t\in\NNN_{\eps}} h^\t - h^{\t^*} \le \eps \right\}$ is empty. 
Recalling (\ref{delta}), we thereby deduce  (\ref{limsup}) and the lemma is proved.
\end{proof}

\medskip

 Let us show that the assumptions {\it i)}, {\it ii)}, {\it iii)}, {\it iv)} and {\it v)} of Lemma~\ref{propC} hold for the family of contrast functions 
 $$h_n^\t(\om^*_{\L_n}) =  K_n^{\hat\delta_n,\t}(\om^*_{\L_n}) = \frac{\ln(Z^{\hat\delta_n,\t}_{\L_n})}{|\L_n|}+ \frac{H^{\t}_{\L_n}(\om^*_{\L_n})}{|\L_n|}.$$

To prove {\it i)},
  first note, from {\bf [MeanEnergy]},  {\bf [Boundary]} and the ergodic theorem, that for any $\t\in\T$ and  $P$-almost all $\om$ 
$$ \lim_{n\to\infty} \frac{1}{|\L_n|} H^\t_{\L_n}(\om_{\L_n})= E_P(H_0^\t)=H^\t(P).$$
Second,  recall from assumption {\bf [VarPrin]} that   for any $\delta\in I$ and $\t\in\T$, $\ln(Z^{\delta,\t}_{\L_n})/{|\L_n|}$ converges to $p(\delta,\t)$ and $\delta\mapsto p(\delta,\t)$ is right continuous.  On the other hand, from   Lemma~\ref{consistency delta}, $\hat\delta_n \to \delta^*$ almost surely with $\hat\delta_n > \delta^*$. 
Let $\t\in\T$, $\epsilon>0$ and $\delta^+>\delta^*$ such that $|p(\delta^*,\t) - p(\delta^+,\t)|<\epsilon$. Let $n$ be sufficiently large so that  $\hat\delta_n<\delta^+$,  $|\ln(Z^{\delta^+,\t}_{\L_n})/{|\L_n|} - p(\delta^+,\t)|<\epsilon$ and $|\ln(Z^{\delta^*,\t}_{\L_n})/{|\L_n|} - p(\delta^*,\t)|<\epsilon$. Then, since $\delta\mapsto Z^{\delta,\t}_{\L_n}$ is decreasing, we have 
$$\ln(Z^{\hat\delta_n,\t}_{\L_n})/{|\L_n|} - p(\delta^*,\t) \leq \ln(Z^{\delta^*,\t}_{\L_n})/{|\L_n|} - p(\delta^*,\t)<\epsilon$$ and $$p(\delta^*,\t) - \ln(Z^{\hat\delta_n,\t}_{\L_n})/{|\L_n|} \leq p(\delta^*,\t) - p(\delta^+,\t) + p(\delta^+,\t) - \ln(Z^{\delta^+,\t}_{\L_n})/{|\L_n|} < 2\epsilon,$$
 proving that   $\ln(Z^{\hat\delta_n,\t}_{\L_n})/{|\L_n|}$ converges to $p(\delta^*,\t)$ almost surely.

Hence for any $\t\in\T$ and for $P$-almost every $\om^*$  $$ \lim_{n\to \infty} h_n^\t(\om^*_{\L_n}) = h^{\t}$$
where 
$$h^\t=p(\delta^*,\t)+H^\t(P)$$ 
which proves assumption {\it i)}.

To prove  Assumption {\it ii)}, note from  {\bf [VarPrin]} that for any $\t\in\T$, any  $P^{\delta^*,\t} \in \G^{\delta^*,\t}$ and any $P^{\delta^*,\t^*} \in \G^{\delta^*,\t^*}$
$$p(\delta^*,\t)\ge -\I(P^{\delta^*,\t^*})-H^\t(P^{\delta^*,\t^*})$$
while $ -\I(P^{\delta^*,\t^*}) = p(\delta^*,\t^*)+ H^\t(P^{\delta^*,\t^*})$
so that 
$$h^\t \geq h^{\t^*}.$$ 
From {\bf [VarPrin]}, the equality holds if and only if $\t=\t^*$ which proves Assumption {\it ii)}.

Assumption  {\it iii)} is given by Lemma~\ref{MLEexistence}.

 The function $\t\mapsto H^\t(P)$ is continuous thanks to \eqref{ineq regener}  in assumption {\bf [Regularity]}.
 By assumption {\bf [VarPrin]} we have for any $\theta$ and $\theta'$ in $\T$
 $$ p(\delta^*,\t')\ge p(\delta^*,\t)+ H^\t(P^{\delta^*,\t})-H^{\t'}(P^{\delta^*,\t}),$$
where $P^{\delta^*,\t}\in\G^{\delta^*,\t}$. By continuity of $\t\mapsto H^\t(P)$, it follows that $\t\mapsto p(\delta^*,\t)$ is lower semicontinuous. Therefore $\t\mapsto h^\t$ is lower  semicontinuous and  assumption {\it iv)} holds.

It remains to prove assumption {\it v)}. Let us start with some preliminary results.

Thanks to  {\bf [MeanEnergy]}, {\bf [Boundary]},  \eqref{ineq regener} in {\bf [Regularity]} and the ergodic Theorem, we have that, for any $\t\in\T$, any $r>0$ and $P$-almost every $\om$,

\begin{equation}\label{min1}
\limsup_{n\to\infty} \frac{1}{|\L_n|} \sup_{
\begin{subarray}{c}
\t'\in\K\\
|\t-\t'|<r
\end{subarray}
} \left|H_{\L_n}^\t(\om_{\L_n})-H_{\L_n}^{\t'}(\om_{\L_n})\right| \le g(r).
\end{equation}
This inequality allows us to control the variation of the infimum of the specific energy in the contrast functions. The following lemma deals with  the variation of the finite volume pressure.

\begin{lemme}\label{LemmeP} For any $\eta>0$ and any $\t_0\in\mathcal K$,  there exists a finite subset $\NNN(\t_0)\subset B(\t_0,\eta)\cap\T$ and $0<r(\t_0)<\eta$ such that for $n\ge 1$ and any $\delta\in I$
\begin{equation}\label{controlZ}  
\min_{\t\in\NNN(\t_0)} \frac{1}{|\L_n|} \ln(Z_{\L_n}^{\delta,\t}) - \inf_{\t\in B(\t_0,r(\t_0))\cap \K} \frac{1}{|\L_n|} \ln(Z_{\L_n}^{\delta,\t})\le e^{\kappa+1} g(r(\t_0)),
\end{equation}
where $g$ comes from assumption {\bf [Regularity]} and $\kappa$ from assumption {\bf [Stability]}.

\end{lemme}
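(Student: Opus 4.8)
The plan is to combine the pointwise energy comparison furnished by the second part of \textbf{[Regularity]} (inequality \eqref{regenerstab}) with a uniform exponential-moment estimate for the number of points, the latter coming from \textbf{[Stability]}.

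First I would fix $\eta>0$ and $\t_0\in\K$ and let $\NNN(\t_0)\subset B(\t_0,\eta)\cap\T$ and $0<r(\t_0)<\eta$ be the net and radius provided by \eqref{regenerstab}. Applying \textbf{[Regularity]} with a smaller $\eta$ if necessary, I may also assume $g(r(\t_0))\le 1$, which is all that will be needed from the modulus $g$. Let $\theta^\sharp\in\NNN(\t_0)$ realize the maximum in \eqref{regenerstab}. Since $\delta\ge\delta_{\min}$ implies $\Oi^\delta\subset\Oi^{\delta_{\min}}$, inequality \eqref{regenerstab} gives, for every $\t'\in B(\t_0,r(\t_0))\cap\K$ and every $\om_\L\in\Oi^\delta$ (and every $\L=\L_n\supset\L_0$),
\begin{equation*}
H_\L^{\theta^\sharp}(\om_\L)\ge H_\L^{\t'}(\om_\L)-g(r(\t_0))\,N_\L(\om_\L).
\end{equation*}
Writing $Q^{\delta,\t'}_\L$ for the probability measure with density $\big(Z^{\delta,\t'}_\L\big)^{-1}e^{-H^{\t'}_\L}\1_{\Oi^\delta}$ with respect to $\pi_\L$, this inequality yields
\begin{equation*}
Z^{\delta,\theta^\sharp}_\L=\int e^{-H^{\theta^\sharp}_\L}\1_{\Oi^\delta}\,d\pi_\L\le \int e^{-H^{\t'}_\L+g(r(\t_0))N_\L}\1_{\Oi^\delta}\,d\pi_\L = Z^{\delta,\t'}_\L\,E_{Q^{\delta,\t'}_\L}\big[e^{g(r(\t_0))N_\L}\big].
\end{equation*}
Taking $|\L|^{-1}\ln(\cdot)$ thus reduces the whole statement to the exponential-moment bound $|\L|^{-1}\ln E_{Q^{\delta,\t'}_\L}[e^{g(r(\t_0))N_\L}]\le e^{\kappa+1}g(r(\t_0))$, uniform in $n$, in $\delta\in I$ and in $\t'\in\K$.

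The core of the proof, and the step I expect to be the real obstacle, is this exponential-moment estimate. I would obtain it from \textbf{[Stability]} via stochastic domination: because the energy increments are bounded below by $-\kappa$ (the operative content of \eqref{eq:stability} for the models considered), the Papangelou conditional intensity of $Q^{\delta,\t'}_\L$ is at most $e^{\kappa}$, and the hardcore indicator $\1_{\Oi^\delta}$ only lowers it further; hence $Q^{\delta,\t'}_\L$ is stochastically dominated by the Poisson process on $\L$ of intensity $e^{\kappa}$, uniformly in $\delta$ and in $\t'\in\K$ (the constant $\kappa$ being uniform over the compact $\K$). Applying the domination to the increasing functional $\om\mapsto e^{g(r(\t_0))N_\L(\om)}$ and computing the Laplace transform of a Poisson variable gives
\begin{equation*}
E_{Q^{\delta,\t'}_\L}\big[e^{g(r(\t_0))N_\L}\big]\le \exp\!\Big(e^{\kappa}\big(e^{g(r(\t_0))}-1\big)\,|\L|\Big).
\end{equation*}
Finally the elementary inequality $e^{c}-1\le e\,c$ for $0\le c\le1$, together with $g(r(\t_0))\le1$, turns $e^{\kappa}\big(e^{g(r(\t_0))}-1\big)$ into the announced constant $e^{\kappa+1}g(r(\t_0))$.

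It then only remains to assemble the pieces. The displayed bound holds for every $\t'\in B(\t_0,r(\t_0))\cap\K$ with one and the same $\theta^\sharp$, so $|\L|^{-1}\ln Z^{\delta,\theta^\sharp}_\L\le |\L|^{-1}\ln Z^{\delta,\t'}_\L+e^{\kappa+1}g(r(\t_0))$ for all such $\t'$; taking the infimum over $\t'$ and using $\min_{\t\in\NNN(\t_0)}\ln Z^{\delta,\t}_\L\le\ln Z^{\delta,\theta^\sharp}_\L$ gives exactly \eqref{controlZ}, with all estimates independent of $n$ and of $\delta\in I$. The reduction through \eqref{regenerstab} is routine; the delicate part is genuinely the uniform domination by a Poisson process with the sharp intensity $e^{\kappa}$, which is where one must exploit that the local energy increments of the concrete models are bounded below.
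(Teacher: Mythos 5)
Your reduction via \eqref{regenerstab} is sound and runs parallel to the paper's, but the step you yourself identify as the core of the argument contains a genuine gap: you cannot get the bound on the Papangelou intensity from \textbf{[Stability]}. Assumption \eqref{eq:stability} is \emph{global} stability, $H_\L^\t(\om_\L)\ge-\kappa N_\L(\om)$; it does not imply that the one-point energy increments $H_\L^\t(\om_\L\cup x)-H_\L^\t(\om_\L)$ are bounded below by $-\kappa$, which is what a Papangelou intensity bounded by $e^{\kappa}$ (hence domination by the Poisson process of intensity $e^{\kappa}$) would require. That stronger property is \emph{local} stability, and it fails for one of the headline models this lemma must cover: the Lennard-Jones potential of Example 2 is superstable, yet inserting a point $x$ into a configuration of $k$ points placed in the attractive well of $\phi^{\beta}$ around $x$ produces an increment $\le -ck$ with $c>0$, unbounded below as $k$ grows. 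So the stochastic domination by Poisson$(e^{\kappa})$, and with it your uniform exponential-moment estimate, simply does not hold in the generality of Theorem~\ref{Th1}; your closing remark that one must ``exploit that the local energy increments of the concrete models are bounded below'' concedes exactly the hypothesis that is unavailable.

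The paper's proof is engineered to avoid needing any exponential moment. Starting from the identity \eqref{lnExp}, it applies Jensen's inequality to get
\begin{equation*}
\ln(Z_{\L_n}^{\delta,\t'})-\ln(Z_{\L_n}^{\delta,\t})\ \ge\ E_{P^{\delta,\t}_{\L_n}}\bigl(H_{\L_n}^{\t}-H_{\L_n}^{\t'}\bigr),
\end{equation*}
so that after invoking \eqref{regenerstab} only the \emph{first} moment $E_{P^{\delta,\t}_{\L_n}}(N_{\L_n})$ has to be controlled; this moment is then bounded by $e^{\kappa+1}|\L_n|$ (equation \eqref{controlN}) through two entropy inequalities --- one using global stability together with $Z_{\L_n}^{\delta,\t}\ge e^{-|\L_n|}$, the other using the variational characterization of entropy with the test function $(\kappa+1)N_{\L_n}\1_{N_{\L_n}\le C}$ --- both of which need nothing beyond \eqref{eq:stability}. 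If you wished to keep your exponential-moment formulation, you would have to derive it from global stability directly (for instance from $P^{\delta,\t'}_{\L_n}(N_{\L_n}=k)\le (e^{\kappa}|\L_n|)^k/k!$, at the cost of different constants and extra care for small $|\L_n|$); the domination argument as you state it cannot be repaired within the assumptions of the lemma.
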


\begin{proof} 
For any $\delta\in I$ and any $\t,\t'$ in $\T$ 
\begin{align}\label{lnExp}
\ln(Z_{\L_n}^{\delta,\t'})-\ln(Z_{\L_n}^{\delta,\t}) & =  \ln\left( \int \frac{1}{Z_{\L_n}^{\delta,\t}} e^{-H_{\L_n}^{\t'}(\om_{\L_n})} \1_{\Oi^\delta}(\om_{\L_n})\pi_{\L_n}(d\om_{\L_n})\right)\nonumber \\
& =  \ln\left( \int  e^{-H_{\L_n}^{\t'}(\om_{\L_n})+ H_{\L_n}^{\t}(\om_{\L_n})}  \1_{\Oi^\delta}(\om_{\L_n}) P_{\L_n}^{\delta,\t}(d\om_{\L_n})\right)\nonumber\\
& =  \ln E_{ P^{\delta,\t}_{\L_n}} (e^{-H_{\L_n}^{\t'}+ H_{\L_n}^{\t}}) 
\end{align}
where  $P_{\L_n}^{\delta,\t}$ is the probability measure $(1/Z^{\delta,\t}_{\L_n}) e^{-H_{\L_n}^{\t}} \1_{\Oi^\delta} \pi_{\L_n}$. By Jensen's inequality we have that

\begin{equation*}
\ln(Z_{\L_n}^{\delta,\t'})-\ln(Z_{\L_n}^{\delta,\t})  \ge  E_{ P^{\delta,\t}_{\L_n}}(H_{\L_n}^{\t}-H_{\L_n}^{\t'}).
\end{equation*}

For any $\eta>0$ and $\t_0$ in $\K$  we choose $\NNN(\t_0)$ and $r(\t_0)>0$ as in \eqref{regenerstab}. Denoting $[x]_+=\max(x,0)$ and writing for short $\NNN$ for $\NNN(\t_0)$ and $B$ for $B(\t_0,r(\t_0))\cap \K$, we obtain, for any $\delta\geq\delta_{\min}$,
\begin{align}\label{kk}
  \min_{\t\in\NNN} \frac{1}{|\L_n|} \ln(Z_{\L_n}^{\delta,\t})& - \inf_{\t\in B}  \frac{1}{|\L_n|} \ln(Z_{\L_n}^{\delta,\t}) \nonumber\\ 
  & =   \min_{\t\in\NNN} \sup_{\t'\in B} \frac{1}{|\L_n|} \left(\ln(Z_{\L_n}^{\delta,\t}) - \ln(Z_{\L_n}^{\delta,\t'})\right)\nonumber\\
   & \leq    \min_{\t\in\NNN} \sup_{\t'\in B} \frac{1}{|\L_n|} \left(E_{ P^{\delta,\t}_{\L_n}}(H_{\L_n}^{\t'}-H_{\L_n}^{\t})\right)\nonumber \\
& =  
 \frac{1}{|\L_n|}    \min_{\t\in\NNN} \sup_{\t'\in B}E_{ P^{\delta,\t}_{\L_n}}\left(\frac{H_{\L_n}^{\t'}-H_{\L_n}^{\t}}{N_{\L_n}}N_{\L_n}\right)\nonumber \\
  & \leq    \frac{1}{|\L_n|}    \min_{\t\in\NNN} \sup_{\t'\in B}\sup_{\om_{\L_n}\in \Oi^{\delta}} \left[\frac{H_{\L_n}^{\t'}(\om_{\L_n})-H_{\L_n}^{\t}(\om_{\L_n})}{N_{\L_n}(\om_{\L_n})}\right]_+ E_{ P^{\delta,\t}_{\L_n}}\left(N_{\L_n}\right)\nonumber \\
   & \leq    \frac{1}{|\L_n|} \left( \max_{\t\in\NNN} E_{ P^{\delta,\t}_{\L_n}}\left(N_{\L_n}\right) \right)  \left[ \min_{\t\in\NNN} \sup_{\t'\in B}\sup_{\om_{\L_n}\in \Oi^{\delta}} \left(\frac{H_{\L_n}^{\t'}(\om_{\L_n})-H_{\L_n}^{\t}(\om_{\L_n})}{N_{\L_n}(\om_{\L_n})}\right)\right]_+ \nonumber \\
     & =   
 \frac{1}{|\L_n|} \left( \max_{\t\in\NNN} E_{ P^{\delta,\t}_{\L_n}}\left(N_{\L_n}\right)  \right) \left[  -\max_{\t\in\NNN} \inf_{\t'\in B}\inf_{\om_{\L_n}\in \Oi^{\delta}} \left(\frac{H_{\L_n}^{\t}(\om_{\L_n})-H_{\L_n}^{\t'}(\om_{\L_n})}{N_{\L_n}(\om_{\L_n})}\right)\right]_+ \nonumber \\
  & \leq  \frac{1}{|\L_n|}  \left( \max_{\t\in\NNN} E_{ P^{\delta,\t}_{\L_n}}\left(N_{\L_n}\right)\right) g(r(\t_0)).
\end{align}

Let us control $E_{P^{\delta,\t}_{\L_n}}(N_{\L_n})$ by entropy inequalities.
By  definition of the entropy, the assumption {\bf [Stability]} and the standard inequality $Z_{\L_n}^{\delta,\t}\ge \pi_{\L_n}(\{\emptyset\})=e^{-|\L_n|}$, we find that
\begin{align}\label{eq1}
\I_{\L_n}( P^{\delta,\t}_{\L_n})& =  \int \ln\left(\frac{e^{-H_{\L_n}^\t} \1_{\Oi^\delta}(\om_{\L_n}) }{Z_{\L_n}^{\delta,\t}}\right)P^{\delta,\t}_{\L_n} \nonumber\\
& \leq  -\ln(Z_{\L_n}^{\delta,\t}) - \int H_{\L_n}^{\t}\, P^{\delta,\t}_{\L_n}\nonumber\\
& \le  |\L_n| +\kappa E_{P^{\delta,\t}_{\L_n}}(N_{\L_n}).
\end{align}
The entropy is also characterised by 
$$ \I_{\L_n}(P^{\delta,\t}_{\L_n})=\sup_{g} \left( \int g P^{\delta,\t}_{\L_n} -\ln\left(\int e^g \pi_{\L_n}\right)\right)$$
where the supremum is over all bounded measurable functions $g$, see for example \cite{varadhan1988}. Choosing $g=(\kappa+1)N_{\L_n} \1_{N_{\L_n}\le C}$ for any constant $C>0$, we get
\begin{align}\label{eq2}
\I_{\L_n}(P^{\delta,\t}_{\L_n}) &\ge (\kappa+1)E_{P^{\delta,\t}_{\L_n}}(N_{\L_n}\1_{N_{\L_n}\le C}) -\ln\left(\int e^{(\kappa+1)N_{\L_n}\1_{N_{\L_n}\le C}}\pi_{\L_n}\right) \nonumber \\
& \ge  (\kappa+1)E_{P^{\delta,\t}_{\L_n}}(N_{\L_n}\1_{N_{\L_n}\le C}) - (e^{\kappa+1}-1)|\L_n|.
\end{align}
Let $C$ goes to infinity and combine (\ref{eq1}) and (\ref{eq2}), we obtain that for any $(\delta,\t)$
\begin{equation}\label{controlN}E_{P^{\delta,\t}_{\L_n}}(N_{\L_n}) \le e^{\kappa+1}|\L_n|.\end{equation}
The inequalities (\ref{kk}) and \eqref{controlN} imply \eqref{controlZ}.
\end{proof}

We are now in position to prove {\it v)}. The function $g_0$ in {\it v)} depends on $g$ in \eqref{min1} and Lemma~\ref{LemmeP} as follows. For any $x>0$,
we choose $g_0(x)>0$ such that
$$  g(2\, g_0(x))+e^{\kappa+1} g(g_0(x))<x.$$
This choice is always possible since $g(u)$ tends to $0$ when $u$ goes to $0$.

Let $\eps>0$ and $\t_0\in\K$ and consider the finite subset $\NNN(\t_0)\subset B(\t_0,\eta)\cap\T$ and the positive number $r(\t_0)$, with $r(\t_0)<\eta$,  given by  Lemma~\ref{LemmeP} where $\eta=g_0(\eps)$. In the following, we write for short $\NNN$ for $\NNN(\t_0)$ and $B$ for $B(\t_0,r(\t_0))\cap \K$. Using \eqref{min1}, Lemma~\ref{LemmeP} and the definition of $g_0$, we have that for $P$-almost every $\om$,
\begin{align*}
& \limsup_{n\to\infty} \left(\min_{\t\in\NNN} h_n^\t(\om_{\L_n})- 
 \inf_{\t\in B} h_n^\t(\om_{\L_n})\right) \\
 &=  \limsup_{n\to\infty} \frac 1 {|\L_n|}\left(\min_{\t\in\NNN} \left(H_{\L_n}^\t(\om_{\L_n}) + \ln(Z_{\L_n}^{\hat\delta_n,\t})\right) -
 \inf_{\t\in B} \left(H_{\L_n}^\t(\om_{\L_n}) + \ln(Z_{\L_n}^{\hat\delta_n,\t})\right) \right) \\
 &\leq  \limsup_{n\to\infty} \frac 1 {|\L_n|}\left( \max_{\t\in\NNN} H_{\L_n}^\t(\om_{\L_n}) -  \inf_{\t\in B} H_{\L_n}^\t(\om_{\L_n}) + \min_{\t\in\NNN}  \ln(Z_{\L_n}^{\hat\delta_n,\t}) -  \inf_{\t\in B} \ln(Z_{\L_n}^{\hat\delta_n,\t}) \right) \\
&\leq \max_{\t\in\NNN}  \limsup_{n\to\infty} \frac 1 {|\L_n|}  \sup_{
\begin{subarray}{c}
\t'\in B
\end{subarray}
} \left|H_{\L_n}^\t(\om_{\L_n})-H_{\L_n}^{\t'}(\om_{\L_n})\right| +  \limsup_{n\to\infty} \frac 1 {|\L_n|} \left( \min_{\t\in\NNN}  \ln(Z_{\L_n}^{\hat\delta_n,\t}) -  \inf_{\t\in B} \ln(Z_{\L_n}^{\hat\delta_n,\t}) \right)\\
&\leq g(2\, g_0(\eps))+e^{\kappa+1} g(g_0(\eps)) < \eps,
 \end{align*}
which proves {\it v)}.

\subsection{Proof of Corollary~\ref{corollaireEM}}

To apply  Theorem \ref{Th1}, we have to prove that {\bf [Argmax]} and {\bf [Regularity]} hold true in the setting of exponential models.
Assumptions {\bf [Argmax]} and  the inequality \eqref{ineq regener}  in {\bf [Regularity]} are obviously satisfied since the energy $H_\L^\theta$  and the mean energy $H_0^\t$ are linear in $\theta$.

It remains to  check \eqref{regenerstab} in {\bf [Regularity]}. Let $\K$ be a compact subset of $\T$ and $\epsilon>0$ such that $\K\oplus B(0,\epsilon)\subset\T$. For any $\eta>0$ and $\theta_0\in\K$, we choose $r$ sufficiently small such that $\t:=(1+r/\epsilon)\theta_0\in B(\theta_0,\eta)\cap\T$ and we set $\mathcal N(\theta_0)=\{\theta\}$. Note that for any $\t'\in\B(\t_0,r)$, $\frac \epsilon r (\t-\t')\in B(\t_0,\epsilon)\subset \K\oplus B(0,\epsilon)$.
Then from \eqref{energyexponentielle} and the assumption  {\bf [Stability]}, for any $\theta'\in B(\theta_0,r)$, for any $\om_\L\in \Oi^{\delta_{\min}}$,
$$\frac \epsilon r (H_{\L}^{\t}(\om_{\L})-H_{\L}^{\t'}(\om_{\L}))=H_{\L}^{\epsilon(\t-\t')/r}(\om_{\L}) \geq -\kappa N_{\L}(\om_\L)$$
which implies $(H_{\L}^{\t}(\om_{\L})-H_{\L}^{\t'}(\om_{\L}))\geq -r \frac \kappa \epsilon N_{\L}(\om_\L)$ and thus \eqref{regenerstab}.

\subsection{Proof of Theorem~\ref{th FR}}   \label{details S1}
 We  check the assumptions of the more general Theorem~\ref{Th1} with the choice $\T=\mathcal Z \times \Upsilon$ where $\mathcal Z$ and $\Upsilon$ are bounded open subsets of $(0,\infty)$ and $\B\times\mathcal R$ respectively satisfying   $\mathcal Z \times \Upsilon\supset\mathcal K$. 
 We denote in the following  $\t=(z,\beta,R)$, where $z\in\mathcal Z$ and $(\beta,R)\in\Upsilon$.

For any $\delta\in I$ and $(\beta,R)\in\Upsilon$, the pairwise interaction $ \infty\1_{[0,\delta)}+\phi^{\beta,R}$  is superstable, finite range and bounded from above. The existence of Gibbs measures in this setting, i.e. assumption {\bf [Existence]},  is proved for example in \cite{Ruelle70}. Moreover, we can deduce  the following useful result from Corollary 2.9 in \cite{Ruelle70}: for any $P\in\G^{\delta,\theta}$, for any bounded set $\Delta$ and  any $c>0$, \begin{equation}\label{ruelle estimate}E_P\left(e^{cN _{\Delta}}\right)<\infty.\end{equation}

Superstability implies  {\bf [Stability]}, where the lower bound can be chosen uniformly over $\T$ since this set is bounded and using the hardcore property. 
The decomposition in {\bf [MeanEnergy]} can be done by 
\begin{equation}\label{Decpairwise}
H^\t_{\Lambda_n} (\om) = \sum_{k\in\I_n} H^\t_0\circ \tau_{-k}  (\om) + \partial H^\t_{\L_n}(\om)
\end{equation}
where 
\begin{equation}\label{H0pair}H^\t_0 (\om) = z\, N_{\Delta_0} (\om) + \sum_{\{x,y\}\in\om_{\Delta_0}} \phi^{\beta,R}(x-y) + \frac 12 \sum_{x \in\om_{\Delta_0}, y\in\om_{\Delta_0^c}} \phi^{\beta,R}(x-y)\end{equation}
 and 
 $$\partial H^\t_{\L_n}(\om) = \frac 12  \sum_{x \in\om_{\L_n}, y\in\om_{\L_n^c}} \phi^{\beta,R}(x-y).$$
 
 The finiteness of the mean energy given by \eqref{def mean energy}  is easily deduced from the finite range and boundedness of $ \phi^{\beta,R}$ along with \eqref{ruelle estimate} that  implies finite second order moments for $P$. On the other hand from \eqref{defH}  we have
$$H^\t_{\L_n}(\om) - H^\t_{\L_n}(\om_{\L_n})=  \sum_{x \in\om_{\L_n}, y\in\om_{\L_n^c}} \phi^{\beta,R}(x-y).$$
Therefore  {\bf [MeanEnergy]} and {\bf [Boundary]} reduce to prove that for $P$-almost every $\om\in\O_T$

\begin{equation}\label{aim}
\sup_{(\beta,R)\in\Upsilon} \frac 1 {|\L_n|} \sum_{x \in\om_{\L_n}, y\in\om_{\L_n^c}} \phi^{\beta,R}(x-y) \to 0.
\end{equation}

Recall the definition of $I_n=\{-n,\ldots, n-1\}^d$. Since $\phi^{\beta,R}$ has finite range and is continuous, there exist two constant $c$ and $\bar r$ large enough such that

\begin{align*} \sup_{(\beta,R)\in\Upsilon} \frac 1{|\L_n|} \sum_{x \in\om_{\L_n}, y\in\om_{\L_n^c}} |\phi^{\beta,R}(x-y)| & \le   \frac{c}{|\L_n|} \sum_{i\in I_n\backslash I_{n-1}}  N_{B(i,\bar r)\cap \L_n}(\om)N_{B(i,\bar r)\cap \L_n^c}(\om)\\
& \le  \frac{c}{|\L_n|} \sum_{i\in I_n\backslash I_{n-1}}  N_{B(i,\bar r)}^2(\om).
\end{align*}

From \eqref{ruelle estimate},  for any $P\in\G^{\delta,\theta}$,  $E_P(N_{B(i,\bar r)}^2)$ is finite and by stationarity $E_P(N_{B(i,\bar r)}^2)=E_P(N_{B(0,\bar r)}^2)$. The ergodic theorem thus implies that $P$ almost surely
$$ \frac{1}{|\L_n|} \sum_{i\in I_n}  N_{B(i,\bar r)}^2(\om) \to E_P(N_{B(0,\bar r)}^2)$$
which yields  \eqref{aim}.

 Assumption  {\bf [Argmax]} holds because $(\beta,R)\mapsto\phi^{\beta,R}(x)$ is lower semicontinuous by construction and the number of discontinuities are finite, so it is easy to change the sense of the brackets in \eqref{model FR} to obtain an upper semicontinuous version.

 \medskip
 
It remains to check   (a)   {\bf [Regularity]} and  (b) {\bf [VarPrin]}. 

 \medskip

(a) 
We start to prove  \eqref{ineq regener} in {\bf [Regularity]}.  Note from \eqref{H0pair} that 
\begin{multline}\label{regener1}
H^\t_0 (\om) - H^{\t'}_0 (\om)  =  (z-z')\, N_{\Delta_0} (\om) \\+ \sum_{\{x,y\}\in\om_{\Delta_0}} (\phi^{\beta,R}-\phi^{\beta',R'})(x-y) + \frac 12 \sum_{x \in\om_{\Delta_0}, y\in\om_{\Delta_0^c}} (\phi^{\beta,R}-\phi^{\beta',R'})(x-y).\end{multline}
If  $|\t-\t'|<r$, obviously there exists a positive function $g$ with $\lim_{s\to 0} g(s)=0$ such that 
\begin{equation}\label{ineqz} |z-z'|\, N_{\Delta_0} (\om) \leq g(r) N_{\Delta_0} (\om).\end{equation}
The two sums in the right hand side of \eqref{regener1} can be handled similarly. We give the details for the first one only, which reduces from \eqref{model FR} to consider the following generic  term for $P$-almost every $\omega$
\begin{align*}
 &\sum_{\{x,y\}\in\om_{\Delta_0}} |\1_{(R_{k-1},R_k)}(|x-y|) \, \varphi_k^{\beta,R}(|x-y|)-\1_{(R'_{k-1},R'_k)}(|x-y|) \, \varphi_k^{\beta',R'}(|x-y|)| \\ &\leq \sum_{\{x,y\}\in\om_{\Delta_0}}  |\varphi_k^{\beta,R}-\varphi_k^{\beta',R'}|(|x-y|) +  c \sum_{\{x,y\}\in\om_{\Delta_0}}  |\1_{(R_{k-1},R_k)}  -\1_{(R'_{k-1},R'_k)}|(|x-y|) 
\end{align*}
where $c$ is a positive constant that comes from the boundedness of $\varphi_k^{\beta,R}$.  Since the map $(\beta,R,x)\mapsto \varphi_k^{\beta, R}(|x|)$ is continuous it is uniformly continuous on any compact set and there exists a positive function $g$,  independent of $x$, $y$ and $\t$, such that $\lim_{s\to 0} g(s)=0$, and if $|\t-\t'|<r$ then 
\begin{equation}\label{term1} \sum_{\{x,y\}\in\om_{\Delta_0}}  |\varphi_k^{\beta,R}-\varphi_k^{\beta',R'}|(|x-y|) \leq  \sum_{\{x,y\}\in\om_{\Delta_0}} g(r) \leq g(r) N_{\Delta_0}^2(\om). \end{equation}

On the other hand, denoting $\Delta R_k$  the symmetric difference $(R_{k-1},R_k) \Delta (R'_{k-1},R'_k)$, we have for any $R\in\mathcal R$
\begin{align*} 
 E_P\left(\sup_{ \begin{subarray}{c}R'\in\mathcal R\\ |R-R'|\le r \end{subarray}}   \sum_{\{x,y\}\in\om_{\Delta_0}}  \1_{\Delta R_k}(|x-y|)\right) & = E_P\left( \sum_{x\in\om_{\Delta_0}}  \sup_{ \begin{subarray}{c}R'\in\mathcal R\\ |R-R'|\le r \end{subarray}}  \sum_{y\in\om_{\Delta_0}, y\neq x}   \1_{\Delta R_k}(|x-y|)\right)\\
& = E_P\left(\int_{\Delta_0} e^{-h^\t(x|\om)} \sup_{ \begin{subarray}{c}R'\in\mathcal R\\ |R-R'|\le r \end{subarray}}  \sum_{y\in\om_{\Delta_0}}   \1_{\Delta R_k}(|x-y|) \  dx\right)
\end{align*}
where the last equality comes from the Georgii-Nguyen-Zessin (GNZ) formula \citep{A-Geo76, A-NguZes79b} and $h^\t(x|\om)$ denotes the local energy needed to insert the point $x$ into the configuration $\om$, defined for  any $\L\ni x$ by  $ h^\t(x|\om)= H_\L^\t(\om\cup x) -H_\L^\t(\om)$. Applying again the GNZ formula we obtain 
 \begin{align} \label{term2}
& E_P\left(\sup_{ \begin{subarray}{c}R'\in\mathcal R\\ |R-R'|\le r \end{subarray}}   \sum_{\{x,y\}\in\om_{\Delta_0}}  \1_{\Delta R_k}(|x-y|)\right)\nonumber\\
& = E_P\left(\int_{\Delta_0^2} e^{-h^\t(y|\om)} e^{-h^\t(x|\om\cup\{y\})} \sup_{ \begin{subarray}{c}R'\in\mathcal R\\ |R-R'|\le r \end{subarray}}   \1_{\Delta R_k}(|x-y|)   dx dy\right)\nonumber \\
& \le  E_P\left(e^{cN _{\Delta_0 \oplus B(0,\bar r)}}\right) \int_{\Delta_0^2}\sup_{ \begin{subarray}{c}R'\in\mathcal R\\ |R-R'|\le r \end{subarray}} \1_{\Delta R_k}(|x-y|)  dx dy\nonumber \\
& \le E_P\left(e^{cN _{\Delta_0 \oplus B(0,\bar r)}}\right)  |\Delta_0| |\mathcal A_k(r)| 
\end{align}
where $c$ and $\bar r$ are two constants large enough and $\mathcal A_k(r)$ is the union of the two rings $\{y\in\R^d,\  R_{k-1}-r \leq |y| \leq R_{k-1}+r\}$ and $\{y,\  R_{k}-r \leq |y| \leq R_{k}+r\}$. Thanks to \eqref{ruelle estimate} and since $|\mathcal A_k(r)|\to 0$ as $r\to 0$, the upper bound  in \eqref{term2} tends to 0 as $r\to 0$. The same conclusion holds true in  \eqref{ineqz} and \eqref{term1} because  $E_P(N_{\Delta_0})<\infty$ and $E_P(N_{\Delta_0}^2)<\infty$. The combination of these results shows  \eqref{ineq regener}.

 Let us now prove  \eqref{regenerstab} in {\bf [Regularity]}. Let $\eta>0$ and $\t_0=(z_0,\beta_0,R_0)\in \mathcal K$ where $R_0=(R_{0,1},\dots,R_{0,q})$. 
We choose $r$ sufficiently small to ensure that 
\begin{itemize}
\item[(i)] $r<\eta$,
\item[(ii)] $B(\t_0,r)\subset \T$,
\item[(iii)] $2r<\inf_{k=1,\dots,q} |R_{0,k}-R_{0,k-1}|$,
\item[(iv)] for all $k=1,\dots,q$, if $\varphi_k^{\beta_0,R_0}(R_{0,k})\neq \varphi_{k+1}^{\beta_0,R_0}(R_{0,k})$, then for all $\t',\t''\in B(\t_0,r)$ and for all $x$ such that $| |x| - R_{0,k}|<r$,   we have 
$$\left(\varphi_k^{\beta',R'}(|x|)- \varphi_{k+1}^{\beta'',R''}(|x|)\right)  \left(\varphi_k^{\beta_0,R_0}(R_{0,k}) -   \varphi_{k+1}^{\beta_0,R_0}(R_{0,k})\right)>0,$$
 which is possible by continuity of $(\beta,R,x)\mapsto \varphi_k^{\beta, R}(|x|)$.
\end{itemize}
Then we fix $\mathcal N(\t_0)=\{(z_0,\beta_0, R)\}$ where $ R = ( R_1,\dots, R_q)$  is defined as follows : if $\varphi_k^{\beta_0,R_0}(R_{0,k})>\varphi_{k+1}^{\beta_0,R_0}(R_{0,k})$ then $ R_k = R_{0,k}+r$,  if $\varphi_k^{\beta_0,R_0}(R_{0,k})<\varphi_{k+1}^{\beta_0,R_0}(R_{0,k})$ then $ R_k = R_{0,k}-r$, and  if $\varphi_k^{\beta_0,R_0}(R_{0,k})=\varphi_{k+1}^{\beta_0,R_0}(R_{0,k})$ then $ R_k = R_{0,k}$. 

For $\t=(z_0,\beta_0, R)$ and $\t'\in B(\t_0,r)$, the proof of  \eqref{regenerstab}  amounts to control 
\begin{equation}\label{diffH}
\frac{H_{\L}^{\t}(\om_{\L})-H_{\L}^{\t'}(\om_{\L})}{N_{\L}(\om_{\L})}=(z_0-z')+ \frac 1 {N_{\L}(\om_{\L})} \sum_{\{x,y\}\in\om, \{x,y\}\cap \om_\L\neq\emptyset}\left( \phi^{ \beta_0,R}(x-y) - \phi^{\beta',R'}(x-y)\right).\end{equation}
The first term $(z_0-z')$ is greater than $-g(r)$ for a positive function $g$  with $\lim_{s\to 0} g(s)=0$. For the second term, let us introduce the middle points $\bar R_{0,k} = \frac 12 (R_{0,k-1}+R_{0,k})$ for $k=1,\dots,q-1$,  $\bar R_{0,0}=0$ and $\bar R_{0,q}= R_{0,q}+2r$. Note that from our choice of $r$, for any $k$ both $ R_k$ and $R'_k$ belong to $[\bar R_{0,k},\bar R_{0,k+1}]$. From \eqref{model FR}, we can write
\begin{align}\label{middle}
 \phi^{\beta_0,R}(x)  - \phi^{\beta',R'}(x)  = &\sum_{k=0}^q \1_{[\bar R_{0,k},\bar R_{0,k+1}]} (|x|) ( \phi^{\beta_0,R}(x)  - \phi^{\beta',R'}(x)) \nonumber\\
  = &\sum_{k=0}^q \Big( \1_{[\bar R_{0,k}, R_k)}(|x|) \varphi_k^{\beta_0,R} (|x|) + \1_{[ R_k,\bar R_{0,k+1}]}(|x|) \varphi_{k+1}^{\beta_0,R} (|x|) \nonumber\\
 & - 
 \1_{[\bar R_{0,k}, R'_k)}(|x|) \varphi_k^{\beta',R'} (|x|) - \1_{[ R'_k,\bar R_{0,k+1}]} (|x|) \varphi_{k+1}^{\beta',R'} (|x|) \Big).
 \end{align}
 If $\varphi_k^{\beta_0,R_0}(R_{0,k})>\varphi_{k+1}^{\beta_0,R_0}(R_{0,k})$, then $R_k>R'_k$ and the $k$-th term in the sum above writes
\begin{multline*} \1_{[\bar R_{0,k}, R'_k)}(|x|) \left( \varphi_k^{\beta_0,R} (|x|) -  \varphi_k^{\beta',R'} (|x|) \right) +  \1_{[ R'_k, R_k]}(|x|) \left( \varphi_k^{\beta_0,R} (|x|) -  \varphi_{k+1}^{\beta',R'} (|x|) \right) \\ +  \1_{[ R_k,\bar R_{0,k+1}]}(|x|) \left( \varphi_{k+1}^{\beta_0,R} (|x|) -  \varphi_{k+1}^{\beta',R'} (|x|)\right). \end{multline*}
From our choice of $r$, see (iv), the second term is always positive, and by uniform continuity of $(\beta,R,x)\mapsto \varphi_k^{\beta, R}(|x|)$, there exists a positive function $g$, independent of $x$, with $\lim_{s\to 0} g(s)=0$, such that the two remaining terms are greater than $-g(r)\1_{[0,\bar R_{0,q}]}(|x|)$. We thus obtain that the $k$-th term in \eqref{middle} is greater than $-g(r)\1_{[0,\bar R_{0,q}]}(|x|)$.

If $\varphi_k^{\beta_0,R_0}(R_{0,k})<\varphi_{k+1}^{\beta_0,R_0}(R_{0,k})$, we obtain the same lower bound by using the fact that  $R_k<R'_k$.
 If $\varphi_k^{\beta_0,R_0}(R_{0,k})=\varphi_{k+1}^{\beta_0,R_0}(R_{0,k})$, then by uniform continuity $\left| \varphi_k^{\beta,R} (|x|) -  \varphi_{k'}^{\beta',R'} (|y|)\right|< g(r)$ for any $k'=k, k+1$, any $|x|,|y|$ in $[\bar R_{0,k},\bar R_{0,k+1}]$ and any $\t,\t'$ in $B(\t_0,r)$,  and  the same lower bound holds for the the $k$-th term in \eqref{middle}.

Coming back to \eqref{diffH}, we deduce that  
\begin{align*}
\frac{H_{\L}^{\t}(\om_{\L})-H_{\L}^{\t'}(\om_{\L})}{N_{\L}(\om_{\L})} & > -g(r) - (q+1)  g(r)  \frac 1 {N_{\L}(\om_{\L})} \sum_{\{x,y\}\in\om, \{x,y\}\cap \om_\L\neq\emptyset} \1_{[0,\bar R_{0,q}]}(|x-y|).
 \end{align*}
For any $\om\in\Oi^{\delta_{\min}}$, there exists $c>0$ such that for any $x\in\om$, $\sum_{y\in\om, y\neq x} \1_{[0,\bar R_{0,q}]}(|x-y|)\leq c$ because of the hardcore distance $\delta_{\min}>0$. Consequently the lower bound above is greater than $-g(r)$, up to a positive constant, and this completes the proof.

\medskip

(b)  The variational principle in this setting  is proved in \cite{georgii94}, which implies \eqref
{ineq} with equality  if and only if  $P^{\delta',\t'}=P^{\delta,\t}$. When $(\delta',\t')=(\delta^*,\t^*)$,   $P^{\delta^*,\t^*}=P^{\delta,\t}$ is equivalent to 
$(\delta,\t)=(\delta^*,\t^*)$ from our identifiability assumption \eqref{identifiabilite}. Hence, in order to verify  {\bf [VarPrin]}, it remains to prove that the pressure  \eqref{defpressure} is right continuous in $\delta$.

For any $\delta>0$ and $\t\in\T$, from the rescaling $\om \to \delta \om$,
\begin{align*}
 Z_{\L_n}^{\delta,\t} & =  \int e^{-H_{\L_n}^{\t}(\om_{\L_n})} \1_{\Oi^\delta}(\om_{\L_n})\pi_{\L_n}(d\om) \\
&= e^{-|\Lambda_n|(1-1/\delta^d)} 
 \int e^{-H_{\L_n}^{\t}((\delta \om)_{\L_n})} \1_{\Oi^1}(\om_{\L_n})  \delta^{N_{\frac 1 \delta \L_n}(\om)} \pi_{\frac 1 \delta \L_n}(d\om)  \\
&=e^{-|\Lambda_n|(1-1/\delta^d)} \tilde Z_{\frac 1 \delta \L_n}^{\delta,\t},
\end{align*}
where $ \tilde Z_{\L}^{\delta,\t}  =  \int e^{-\tilde H_{\L}^{\delta,\t}(\om_{\L})} \1_{\Oi^1}(\om_{\L})\pi_{\L}(d\om_{\L})$ is the partition function associated to the Gibbs measure defined on $\Oi^1$ with energy function $\tilde H_{\L}^{\delta,\t}(\om_{\L}) = H_{\delta \L}^{\t}(\delta (\om_{\L})) - N_{\L} (\om) \ln(\delta)$.  Denoting  $\tilde p(\delta,\t)$  the pressure associated to the latter measure, we deduce 
\begin{equation}\label{pressure tilde}p(\delta,\t) = \frac 1 {\delta^d} \ \tilde p(\delta,\t)-1+1/\delta^d.\end{equation}
The energy function $\tilde H_{\L_n}^{\delta,\t}$ follows a decomposition like \eqref{dec} where 
\begin{equation*}
\tilde H^{\delta,\t}_0 (\om) = (z - \ln(\delta)) \, N_{\Delta_0} (\om)  + \sum_{\{x,y\}\in\om_{\delta \Delta_0}} \phi^{\beta,R}(\delta x- \delta y)  + \frac 12 \sum_{x \in\om_{\delta \Delta_0}, y\in\om_{\delta\Delta_0^c}} \phi^{\beta,R}(\delta x-\delta y).
\end{equation*}
By similar arguments as used earlier in (a) to prove   \eqref{ineq regener}, we deduce that $\delta\mapsto E_P( \tilde H^{\delta,\t}_0 )$ is continuous.
Moreover,  $\tilde H_{\L_n}^{\delta,\t}$ inherits the superstable, lower regular and regular properties of $\phi^{\beta,R}$, which shows from \cite{georgii94} that a variational principle holds, i.e. 
for any $(\delta,\theta)$ and $(\delta',\theta')$ 
$$ \tilde p(\delta',\t')\ge \tilde p(\delta,\t)+ \tilde H^{\delta,\t}(\tilde P^{\delta,\t})-\tilde H^{\delta',\t'}(\tilde P^{\delta,\t}),$$
where $\tilde P^{\delta,\t}$ denotes a Gibbs measure associated to  $\tilde H_{\L_n}^{\delta,\t}$. Note that contrary to \eqref{ineq}, this inequality is valid for any $\delta,\delta'$ because   $\tilde P^{\delta,\t}$ is supported on $\Oi^1$ for any $(\delta,\t)$. We therefore deduce that $\delta\mapsto \tilde p(\delta,\t)$ is lower semicontinuous, and so is $\delta\mapsto p(\delta,\t)$ from 
\eqref{pressure tilde}. Since $\delta\mapsto  Z_{\L_n}^{\delta,\t}$ is decreasing, $\delta\mapsto p(\delta,\t)$  turns out to be both decreasing and lower semicontinuous, which shows that it is right continuous.

\subsection{Proof of Theorem~\ref{th FR2}} \label{details S2}

 As in the previous section, it is sufficient to check the assumption of the main Theorem \ref{Th1}. The assumption {\bf[Stability]} is included in the assumptions of Theorem~\ref{th FR2} and has not to be showed. The proofs of assumptions {\bf [Existence]}, {\bf [MeanEnergy]}, {\bf [Boundary]}, {\bf [Argmax]} and the first part of {\bf [Regularity]}, i.e. \eqref{ineq regener}, are the same as in the proof of Theorem~\ref{th FR}. The variational principle is shown in \cite{DerStu} for any finite range stable pair potential. Since $ I=\{0\}$, the right continuity of the pressure is not required here, so Assumption {\bf [VarPrin]} follows. It remains to prove the second part of {\bf [Regularity]}, i.e.  \eqref{regenerstab}.

Let  $\eta>0$ and $\theta_0=(z_0,\beta_0,R_0)\in \K$ where $R_0=(R_{0,1},\ldots,R_{0,q})$. We denote by $\bar R$ the maximal value of $R_q$ when $\theta=(z,\beta,R)\in\K$.  We choose $r>0$ small enough such that

\begin{itemize}
\item[(i)] $r<\eta$,
\item[(ii)] $B(\t_0,r)\subset \T$,
\item[(iii)] $2r<\inf_{k=1,\dots,q} |R_{0,k}-R_{0,k-1}|$,

\item[(iv)] there exists $\beta_1\in \B$ such that for any $\theta'\in B(\theta_0,r)$, any $\theta=(z,\beta,R)\in B((z_0,\beta_1,R_0),\sqrt{qr})$, any $k=1,\ldots, q$ and any $x\in B(0,\bar R)$
$$ \varphi_k^{(\beta,R)}(|x|) \ge \varphi_k^{(\beta',R')}(|x|),$$
which is possible by continuity of $(\beta,R,x)\mapsto \varphi_k^{\beta, R}(|x|)$ and thanks to the local positivity assumption \eqref{localpositivity}. 

\item[(v)] for all $k=1,\dots,q$, if $\varphi_k^{\beta_0,R_0}(R_{0,k})\neq \varphi_{k+1}^{\beta_0,R_0}(R_{0,k})$, then for all $\t',\t''\in B(\t_0,r)\cup B((z_0,\beta_1,R_0),\sqrt{qr})$ and for all $x$ such that $| |x| - R_{0,k}|<r$,   we have 
$$\left(\varphi_k^{\beta',R'}(|x|)- \varphi_{k+1}^{\beta'',R''}(|x|)\right)  \left(\varphi_k^{\beta_0,R_0}(R_{0,k}) -   \varphi_{k+1}^{\beta_0,R_0}(R_{0,k})\right)>0,$$
 which is possible by continuity of $(\beta,R,x)\mapsto \varphi_k^{\beta, R}(|x|)$ and since $\beta_1$ can be chosen as close as we want to $\beta_0$.
\item[(vi)] for all $k=1,\dots,q$, if $\varphi_k^{\beta_0,R_0}(R_{0,k})= \varphi_{k+1}^{\beta_0,R_0}(R_{0,k})$, then  for all $\t'\in B(\t_0,r)$, $\t\in B((z_0,\beta_1,R_0),\sqrt{qr})$ and for all $x$ such that $| |x| - R_{0,k}|<r$, 
$$ \varphi_k^{(\beta,R)}(|x|) \ge \varphi_{k+1}^{(\beta',R')}(|x|) \quad  \text{ and } \quad \varphi_{k+1}^{(\beta,R)}(|x|) \ge \varphi_{k}^{(\beta',R')}(|x|), $$
which is possible for the same reasons as in  (iv). 

\end{itemize}

Now we choose $\mathcal N(\t_0)=\{(z_0,\beta_1, R)\}$ where $R=(R_1,R_2,\ldots,R_q)$ is as in the proof of Theorem \ref{th FR}, see \eqref{diffH} and before. Note that $(z_0,\beta_1,R)$ is in $B((z_0,\beta_1,R_0),\sqrt{qr})$ and that $\beta_1$ can be chosen sufficiently close to $\beta_0$ to ensure $\mathcal N(\t_0)\subset B(\t_0,\eta)$. Following the same calculus as in \eqref{middle}, we obtain that for any $\theta'=(z',\beta',R')\in B(\theta_0,r)$

$$ \phi^{\beta_1,R}(x) \ge \phi^{\beta',R'}(x).$$

We deduce that 

\begin{align*}
\frac{H_{\L}^{\t_0}(\om_{\L})-H_{\L}^{\t'}(\om_{\L})}{N_{\L}(\om_{\L})} & > z_0-z'>-r
 \end{align*}
and the second part of {\bf[Regularity]} is proved.

 \subsection{Proof of Theorem~\ref{th IR}}\label{details LJ}
 
 We check the assumptions of  Theorem \ref{Th1}. For any $\beta\in\B$ the pair potential $\phi^\beta$ is regular and non-integrably divergent at the origin. As it is well known (see \cite{Ruelle} for instance), $\phi^\beta$ is superstable and the existence of an associated Gibbs measures follows, i.e. {\bf [Existence]} holds true. Superstability implies stability which  shows assumption {\bf [Stability]}. Since the map $\beta\mapsto \phi^\beta$ is differentiable and so continuous, assumption {\bf [Argmax]} is obvious. The decomposition in {\bf [MeanEnergy]} is done as in  \eqref{Decpairwise}. In particular the mean energy \eqref{def mean energy} is 
 \begin{equation}\label{mean energy lennard}
 H^{\theta}(P)=E_P\left( z\, N_{\Delta_0}  + \sum_{\{x,y\}\in\om_{\Delta_0}} \phi^{\beta}(x-y) + \frac 12 \sum_{x \in\om_{\Delta_0}, y\in\om_{\Delta_0^c}} \phi^{\beta}(x-y)\right)\end{equation}
where $\theta=(z,\beta)$. To check  assumptions {\bf [MeanEnergy]} and {\bf [Boundary]}, we thus have to prove that  \eqref{mean energy lennard} is finite and that for any compact set $\K\subset\B$, 
 \begin{equation} \label{boundaryLJ}
\sup_{\beta\in\K} \frac 1 {|\L_n|} \sum_{x \in\om_{\L_n}, y\in\om_{\L_n^c}} \phi^{\beta}(x-y) \to 0.
\end{equation}
 For this purpose, we need the following Ruelle estimates.
   \begin{proposition}[\cite{Ruelle70}]\label{RuelleEstimate}
We define, for any $i\in\Z^d$, $ \psi_i=\psi(max(|i|-1,0),$
where the function $\psi$ comes from assumption \eqref{integrabilitypsi}. Then  $\sum_{i\in\Z^d} \psi_i<+\infty$ and for any $c>0$ 

$$E_P\left(e^{c\sum_{i\in\Z^d} \psi_i N_{\tau_i(\Delta_0)}}\right)<+\infty.$$

 \end{proposition}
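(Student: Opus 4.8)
The plan is to prove the two assertions separately: the summability of the weights is an elementary comparison of a lattice sum with the integral in \eqref{integrabilitypsi}, while the exponential moment will be reduced, by a Jensen argument exploiting precisely that summability, to an exponential moment of the count in a single unit cube, for which I would invoke Ruelle's superstability estimates.

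For $\sum_{i\in\Zd}\psi_i<+\infty$, I would group the indices into spherical shells. Since $\psi$ is positive and decreasing and $\psi_i=\psi(\max(|i|-1,0))$, the finitely many indices with $|i|\le 1$ each contribute the finite value $\psi(0)$ and are harmless, while for $|i|\ge 1$ the number of $i\in\Zd$ with $|i|\in[k,k+1)$ is of order $k^{d-1}$, so that $\psi_i\le\psi(k-1)$ on that shell. Hence $\sum_{|i|\ge1}\psi_i$ is bounded, up to a dimensional constant, by $\sum_{k\ge1}k^{d-1}\psi(k-1)$. Using the monotonicity of $\psi$ to write $k^{d-1}\psi(k-1)\le C\int_{k-2}^{k-1}t^{d-1}\psi(t)\,dt$ for $k$ large, this sum is comparable to $\int_{r_0}^{+\infty}\psi(t)t^{d-1}\,dt<+\infty$, which gives $S:=\sum_{i\in\Zd}\psi_i<+\infty$.

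For the exponential moment, the key step is to turn the summability of the weights into a convex-combination estimate. Setting $p_i=\psi_i/S$, the family $(p_i)_{i\in\Zd}$ is a probability vector, so for any $c>0$ the convexity of the exponential yields
$$\exp\Bigl(c\sum_{i\in\Zd}\psi_i\,N_{\tau_i(\Delta_0)}\Bigr)=\exp\Bigl(cS\sum_{i\in\Zd}p_i\,N_{\tau_i(\Delta_0)}\Bigr)\le \sum_{i\in\Zd}p_i\,\exp\bigl(cS\,N_{\tau_i(\Delta_0)}\bigr).$$
Taking $P$-expectations, using the stationarity of $P$ (so that $E_P(e^{cS N_{\tau_i(\Delta_0)}})=E_P(e^{cS N_{\Delta_0}})$ for every $i$) together with $\sum_i p_i=1$, I would obtain
$$E_P\Bigl(e^{c\sum_{i\in\Zd}\psi_i N_{\tau_i(\Delta_0)}}\Bigr)\le E_P\bigl(e^{cS\,N_{\Delta_0}}\bigr),$$
so the statement reduces to the finiteness of \emph{every} exponential moment of the count $N_{\Delta_0}$ in a single unit cube.

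The hard part, and the only non-elementary ingredient, is precisely this single-cube bound $E_P(e^{b\,N_{\Delta_0}})<+\infty$ for all $b>0$. This is the content of Ruelle's superstability estimates \cite{Ruelle70}: since $\phi^\beta$ is regular and non-integrably divergent at the origin it is superstable and lower regular (see also \cite{Ruelle}), and for such potentials every tempered Gibbs measure satisfies a superstability estimate of the form $E_P(e^{a N_{\Delta_0}^2})<+\infty$ for some $a>0$. This in turn forces $E_P(e^{bN_{\Delta_0}})<+\infty$ for every $b$ via the elementary bound $bN\le aN^2+b^2/(4a)$. I would cite this deep analytic estimate rather than reprove it, as it is the ingredient underlying the whole section; combined with the Jensen reduction above it closes the argument.
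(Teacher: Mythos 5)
Your proposal is correct, and it is genuinely more explicit than what the paper does: the paper offers no proof at all for this proposition, simply attributing it wholesale to Ruelle \cite{Ruelle70}, whose superstability machinery directly contains such weighted-sum estimates. Your route instead reduces the weighted statement to the single-cube moment bound $E_P(e^{bN_{\Delta_0}})<\infty$ for all $b>0$ --- which is exactly the form of Ruelle's Corollary 2.9 that the authors themselves quote elsewhere as \eqref{ruelle estimate} --- by two elementary steps: the shell-by-shell comparison of $\sum_i\psi_i$ with $\int_{r_0}^{\infty}\psi(t)t^{d-1}dt$ from \eqref{integrabilitypsi}, and the Jensen/convexity trick with the probability weights $p_i=\psi_i/S$, which combined with Tonelli and stationarity gives $E_P\bigl(e^{c\sum_i\psi_i N_{\tau_i(\Delta_0)}}\bigr)\le E_P\bigl(e^{cS N_{\Delta_0}}\bigr)$. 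Both steps are sound (the pointwise Jensen inequality holds in the extended sense even when the weighted sum is infinite, and the applicability of Ruelle's estimate is justified since regularity plus non-integrable divergence at the origin yields superstability and lower regularity). What the paper's citation buys is brevity; what your argument buys is transparency: it shows the proposition is not an independent deep fact but a formal consequence, via convexity, of the standard single-cube superstability estimate, so a reader need only trust the one input the paper already invokes.
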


Let us show \eqref{boundaryLJ}. Note that
\begin{equation*}
\sup_{\beta\in\K} \frac 1 {|\L_n|} \sum_{x \in\om_{\L_n}, y\in\om_{\L_n^c}} \phi^{\beta}(x-y) \le A_1+A_2
\end{equation*}
with
$$  A_1=  \frac 1 {|\L_n|} \sum_{
\begin{subarray}{c}
x \in\om_{\L_n\backslash \L_{n-n_0}}\\
 y\in\om, \,y\neq x\\
  |x-y|\le r_0
\end{subarray}  
  }  \sup_{\beta\in\K}|\phi^{\beta}|(x-y)  \qquad \text{and} \qquad  A_2= \frac 1 {|\L_n|}  \sum_{
\begin{subarray}{c}
x \in\om_{\L_n}\\
 y\in\om_{\L_n^c}\\
  |x-y|> r_0
\end{subarray}  
  }  \sup_{\beta\in\K}|\phi^{\beta}|(x-y),$$
 where $r_0$ comes from assumption    \eqref{integrabilitypsi} and $n_0$ is an integer greater than $r_0$. 

By the spatial ergodic theorem, $P$-almost surely
$$ \lim_{n\to \infty}  \frac 1 {|\L_n|} \sum_{
\begin{subarray}{c}
x \in\om_{\L_n}\\
 y\in\om,\,y\neq x\\
  |x-y|\le r_0
\end{subarray}  
  }  \sup_{\beta\in\K}|\phi^{\beta}|(x-y) = E\Bigg(\sum_{
\begin{subarray}{c}
x \in\om_{\Delta_0}\\
 y\in\om,\,y\neq x\\
  |x-y|\le r_0
\end{subarray}  }  \sup_{\beta\in\K}|\phi^{\beta}|(x-y) \Bigg).$$
Therefore, $A_1$ goes $P$-almost surely to $0$ if the expectation above is finite. By the GNZ equation, stationarity of $P$ and  assumption  \eqref{integrabilitypsi}
\begin{align}\label{borneesperance}
E\bigg(\sum_{
\begin{subarray}{c}
x \in\om_{\Delta_0}\\
 y\in\om,\,y\neq x\\
  |x-y|\le r_0
\end{subarray}  }  \sup_{\beta\in\K}|\phi^{\beta}|(x-y) \bigg) & = z^* E\bigg(e^{-\sum_{z\in\omega} \phi^{\beta^*}(z)}\sum_{
\begin{subarray}{c}
 y\in\om\\
  |y|\le r_0
\end{subarray}  }  \sup_{\beta\in\K}\phi^{\beta}(y) \bigg)\nonumber\\
& \le  z^*E\bigg( e^{\sum_{i\in\Z^d}  \psi_i N_{\tau_i(\Delta_0)}} \sum_
{\begin{subarray}{c}
 y\in\om\\
  |y|\le r_0
\end{subarray}  } e^{-\sum_{z\in\omega,|z|\leq r_0} \phi^{\beta^*}(z)}\sup_{\beta\in\K}\phi^{\beta}(y) \bigg)\nonumber\\
& \le  z^*E\bigg( e^{\sum_{i\in\Z^d}  \psi_i N_{\tau_i(\Delta_0)}} \sum_
{\begin{subarray}{c}
 y\in\om\\
  |y|\le r_0
\end{subarray}  }  e^{-\phi^{\beta^*}(y)}\sup_{\beta\in\K}\phi^{\beta}(y) \bigg)\nonumber\\
& \le  z^*E\bigg( e^{\sum_{i\in\Z^d}  \psi_i N_{\tau_i(\Delta_0)}} N_{B(0,r_0)} \bigg) \sup_{\beta\in\K} \sup_{y\in\B(0,r_0)}\sup_{\beta'\in\K} e^{-\phi^{\beta'}(y)} \phi^{\beta}(y)
\end{align}
which is finite by assumption \eqref{goodexplosion} and Proposition \ref{RuelleEstimate}. 
    
Regarding the term $A_2$, for any integer $K$ we have
 \begin{align}\label{calculus}
  A_2 & \le  \frac{1}{|\L_n|}\sum_{i\in \L_n, j\in\L_n^c} \psi_{|i-j|}N_{\tau_i(\Delta_0)}(\om)N_{\tau_j(\Delta_0)}(\om)\nonumber\\
  & \le   \frac{1}{|\L_n|} \sum_{i\in \L_n\backslash \L_{n-K}, j\in\Z^d} \psi_{|i-j|}N_{\tau_i(\Delta_0)}(\om)N_{\tau_j(\Delta_0)}(\om) +\frac{1}{|\L_n|} \sum_{
  \begin{subarray}{c}
i\in \L_{n-K}, j\in \Zd\\
  |j-i|\ge K
\end{subarray} } \psi_{|i-j|}N_{\tau_i(\Delta_0)}(\om)N_{\tau_j(\Delta_0)}(\om).
  \end{align}  
By Proposition \ref{RuelleEstimate}, $ N_{\Delta_0}\sum_{j\in\Zd} \psi_j N_{\tau_j(\Delta_0)}$ is integrable  
which implies thanks to the ergodic theorem that, when $n$ goes to infinity, the first term in \eqref{calculus} goes to zero and the second term goes to
$$  E\bigg(N_{\Delta_0}\sum_{j\in\Zd, |j|\ge K} \psi_j N_{\tau_j(\Delta_0)}\bigg).$$

Choosing $K$ large enough this last expectation can be made smaller than any positive value.  Therefore the term $A_2$ tends $P$-almost surely to zero as well and  \eqref{boundaryLJ} holds true.

By similar computations as above, using  the Ruelle estimates in Proposition~\ref{RuelleEstimate}, we obtain  that the mean energy \eqref{mean energy lennard} is finite. Assumptions {\bf[MeanEnergy]} and {\bf [Boundary]} are verified.

The existence and finiteness of the pressure in assumption {\bf[VarPrin]} is proved in Theorem~0.2 of \cite{Ruelle70}. The variational principle for such Gibbs measures is proved in \cite{georgii94b,georgii94} and thanks to the identifiability assumption\eqref{identifiabiliteLJ} we get {\bf [VarPrin]}.

 It remains to prove {\bf [Regularity]}. Let us show \eqref{ineq regener}. For any compact set $\mathcal Z\times \K\subset\T$ with $\mathcal Z\subset\R_+$ and $\K\subset\B$, for any $r>0$,
\begin{align*}
 E_P\bigg(\sup_{
\begin{subarray}{c}
\t'\in\mathcal Z\times\K\\
|\t-\t'|\le r
\end{subarray}} 
\left| H_0^\t-H_0^{\t'}\right| \bigg) &\le  rE_P(N_{\Delta_0})+ E_P\bigg(\sum_{x\in\omega_{\Delta_0}} \sum_{y\in\omega\backslash\{x\}} \sup_{
\begin{subarray}{c}
\beta'\in\K\\
|\beta-\beta'|\le r
\end{subarray}}|\phi^\beta-\phi^{\beta'}|(x-y)\bigg)\\
& \le   rE_P(N_{\Delta_0})+ r E_P\bigg(\sum_{x\in\omega_{\Delta_0}} \sum_{y\in\omega\backslash\{x\}} \sup_{\beta\in\K} |\nabla \phi^\beta|(x-y)\bigg).
\end{align*}
The proof of   \eqref{ineq regener} is completed if we show that the two expectations above are finite. This is clear for  the first one. We split the second one according to $|y|\leq r_0$ or $|y|>r_0$ where $r_0$ comes from assumption   \eqref{integrabilitypsi}. Thanks to similar computations as in \eqref{borneesperance} we get
\begin{multline*}
E_P\bigg(\sum_{x\in\omega_{\Delta_0}} \sum_{\begin{subarray}{c}
y\in\omega\backslash\{x\}\\
|y|\le r_0
\end{subarray}} \sup_{\beta\in\K} |\nabla \phi^\beta|(x-y)\bigg)\\ 
\leq z^*E\bigg( e^{\sum_{i\in\Z^d}  \psi_i N_{\tau_i(\Delta_0)}} N_{B(0,r_0)} \bigg)
\sup_{\beta\in\K} \sup_{y\in\B(0,r_0)}\sup_{\beta'\in\K} e^{-\phi^{\beta'}(y)}|\nabla\phi^{\beta}|(y)
\end{multline*}
which is finite by \eqref{goodexplosion} and Proposition~\ref{RuelleEstimate},  while from \eqref{gradientregular}
\begin{align*}
E_P\bigg(\sum_{x\in\omega_{\Delta_0}} \sum_{\begin{subarray}{c}
y\in\omega\backslash\{x\}\\
|y|> r_0
\end{subarray}} \sup_{\beta\in\K} |\nabla \phi^\beta|(x-y)\bigg)\leq E\bigg(N_{\Delta_0}\sum_{j\in\Zd} \psi_j N_{\tau_j(\Delta_0)}\bigg)
\end{align*}
which is also finite. This proves  \eqref{ineq regener}  in {\bf [Regularity]}.

 To prove \eqref{regenerstab}, let $\eta>0$ and $\theta_0=(z_0,\beta_0)\in\mathcal Z\times \K$. There exists  $0<\xi<\eta$ such that $B(\beta_0,\xi)\subset\B$. Consider the open set $U$ associated to $\K\oplus B(0,\xi)$ through the assumption \eqref{gradientdirection}.  Let us fix $u_0\in U$ and $\epsilon>0$ such that  $B(u_0,\epsilon)$ is a ball included in $U$. We choose $r$ sufficiently small to ensure that $r<\xi$ and $\beta:=\beta_0 + \frac r \epsilon u_0$ belongs to $B(\beta_0,\eta)$. By assumption, for any $\beta'\in B(\beta_0,r)$ and $x\in\R^d$,  there exists $t\in[0,1]$ such that
 $$ \phi^\beta(x)-\phi^{\beta'}(x)=\nabla \phi^{\beta'+t(\beta-\beta')}(x).(\beta-\beta').$$
 Note that the above choices ensure that $\beta'+t(\beta-\beta')$ belongs to $B(\beta_0,\xi)\subset\K\oplus B(0,\xi)$ and  that $\frac\epsilon r (\beta-\beta')$ belongs to  $B(u_0,\epsilon)\subset U$. Hence by assumption \eqref{gradientdirection}, for any $x\in\R^d$,
 $$\frac \epsilon r ( \phi^\beta(x)-\phi^{\beta'}(x)) \ge \tilde\phi(x)$$
 where  $\tilde\phi$ is a stable pair potential.
 We finally choose $\NNN(\theta_0)=\{\theta\}:=\{(z_0,\beta)\}$. Therefore for any bounded set $\L$, any $\theta'=(z',\beta')\in [z_0-r,z_0+r]\times  B(\beta_0,r)$ and any configuration $\omega$, denoting by $A>0$ the stability constant of $\tilde\phi$, we have
 \begin{align*}\label{ProofRegEner}
 H_\L^\theta(\omega_\L)-H_\L^{\theta'}(\omega_\L) \ge (z_0-z')N_\L(\omega_\L)+\sum_{\{x,y\}\in\omega_\L} \frac r \epsilon \,\tilde\phi(x-y)  \ge  -r\,(1+ A/\epsilon)N_\L(\omega_\L)
 \end{align*} 
  which proves \eqref{regenerstab} and concludes the proof.

 \subsection{Proof of Proposition \ref{prop Area}} \label{proofThArea}
 
We apply  Theorem \ref{Th1}. The local energy $h^\t(x|\omega)$ associated to the area-interaction process is,  for any $\L$ containing $x$,
 \begin{equation}\label{localenergyArea}
 h^\t(x|\omega)=H_\L^\t(\omega\cup x)-H_\L^\t(\omega)=z+\beta \lambda^d\left(B(x,R)\setminus \bigcup_{y\in\omega} B(y,R)\right).
\end{equation}
Since $\K$ is compact, $\bar z=\sup_{\t\in\K} z$, $\bar R=\sup_{\t\in\K} R$ and $\bar \beta=\sup_{\t\in\K} |\beta|$ are all finite and  $h^\t$ is uniformly bounded: 
$$ \sup_{x\in\R^d} \sup_{\t\in\K} \sup_{\omega\in\Omega} | h^\t(x|\omega)|\le \bar z+\bar\beta(2\bar R)^d.$$
This local stability property implies {\bf [Stability]}. 
Since $h^\t$ has also finite range $2\bar R$, an associated  Gibbs measure exists and the variational principle holds with the existence of the pressure, see \cite{B-Pre76}  and \cite{DerStu}. This gives {\bf [Existence]} and  {\bf [VarPrin]}. Since the application $\theta\mapsto H_\L^\theta(\omega)$ is continuous, the assumption {\bf [Argmax]} is obviously satisfied. As for  {\bf [MeanEnergy]} we suggest this decomposition (other one could have been considered)
$$ H^\theta_{\L_n}=\sum_{k\in\I_n} H_0^\t\circ\tau_{-k} +\partial H^\t_{\L_n}$$
with
\begin{equation}\label{meanenergyArea}
H_0^\t(\omega)=z N_{\Delta_0}(\omega) + \lambda^d\left(\Delta_0\cap\bigcup_{x\in\omega} B(x,R)\right).
\end{equation}
Clearly $E_P(H_0^\t)$ is finite. Moreover for any $n\ge 1$ 
$$  |\partial H^\t_{\L_n}| \le \bar\beta \lambda^d\left((\L_n\oplus B(0,\bar R))\setminus (\L_n \ominus B(0,\bar R))\right) \le 4d\bar \beta \bar R  (2n+2\bar R)^{d-1}  $$
which implies {\bf [MeanEnergy]}. Similar computations show that assumption {\bf [Boundary]} holds as well. 
Finally, Assumption {\bf [Regularity]}  is a consequence of the following uniform continuity of $H_0^\t$ coming from a simple geometric analysis: for any $\theta=(z,R,\beta)$, $\theta'=(z',R',\beta')$ in $\T$ (we assume $R\le R'$)
\begin{align*}
 |H_0^{\t}-H_0^{\t'}| &\le |H_0^{(z,R,\beta)}-H_0^{(z',R,\beta)}|+|H_0^{(z',R,\beta)}-H_0^{(z',R',\beta)}|+|H_0^{(z',R',\beta)}-H_0^{(z',R',\beta')}| \\
 & \le  |z-z'|N_{\Delta_0}+2d|\beta|(2R')^d|R-R'| N_{\Delta_0 \oplus B(0,R')}+|\beta-\beta'|.
 \end{align*}

\subsection{Proof of Proposition \ref{prop quermass}}

Since the Quermass-interaction model has an exponential form, it is sufficient to check the assumptions of Corollary~\ref{corollaireEM}. The existence of the model for any $z>0$ and any $\beta\in\R^3$ is proved in \cite{A-Der09} while stability of the interaction follows from  \cite{A-Kendall99}. The assumptions {\bf [MeanEnergy]} and {\bf [Boundary]} can be proved as in Section~\ref{proofThArea}, see \eqref{meanenergyArea}, thanks to the additivity of the Minkowski functionals. The assumption  {\bf [VarPrin]} follows from \cite{DerStu} since the interaction is stable and finite range. \\

\section*{Acknowledgments}
 The authors  are grateful to Shigeru Mase for supplying his unpublished manuscript \cite{mase02}.  They also thank Jean-François Coeurjolly for fruitful discussions and the anonymous referees for their suggestions which helped improve this paper. 
This work was partially supported by  the Labex CEMPI  (ANR-11-LABX-0007-01).

\bibliographystyle{abbrv}

\bibliography{bibaph}

\end{document}